\documentclass{amsart}
\usepackage{amsfonts}
\usepackage{amsmath,amssymb}
\usepackage{amsthm}
\usepackage{amscd}
\usepackage{graphics}
\usepackage{graphicx}
{
\theoremstyle{definition}
\newtheorem{Def}{Definition}
\newtheorem{Ex}{Example}

}
\newtheorem{Cor}{Corollary}
\newtheorem{Prop}{Proposition}
\newtheorem{Thm}{Theorem}

\begin{document}
\title{Constructions of round fold maps on circle bundles}
\author{Naoki Kitazawa}
\address{Department of Mathematics, Tokyo Institute of Technology, 
2-12-1 Ookayama, Meguro-ku, Tokyo 152-8551, JAPAN, Tel +81-(0)3-5734-2205, 
Fax +81-(0)3-5734-2738,
}
\email{kitazawa.n.aa@m.titech.ac.jp}
\subjclass[2010]{Primary~57R45. Secondary~57N15.}
\keywords{Singularities of differentiable maps; singular sets, fold maps. Differential topology.}
\maketitle
\begin{abstract}
{\it {\rm (}Stable{\rm )} fold maps} are fundamental tools in a generalization of the theory of Morse functions on smooth manifolds
 and its application to studies of algebraic and differentail topological properties of smooth manifolds. {\it Round} fold maps were introduced as stable fold
 maps with {\it singular value sets}, or the set consisting of all the singular values, of concentric spheres by the author
 in 2013; for example, some {\it special generic} maps on spheres
 are regarded as round fold maps whose singular value sets are connected. \\
\ \ \ To construct explicit fold maps has been a fundamental and difficult problem. 
 In 2014, the author succeeded in constructing explicit round fold maps into a Euclidean
 space of dimension larger than $1$ on bundles
 over the standard sphere whose dimension is equal to that of the Euclidean space and connected
 sums of bundles over the standard sphere with fibers diffeomorphic to standard spheres by constructing maps locally
 and gluing them together properly. Later, the author constructed such maps
 on bundles
 over spheres or more general manifolds including families of bundles whose fibers are circles over given manifolds by applying operations
 compatiable with the structures of bundles ({\it P-operations}). \\
\ \ \ In this paper, we obtain new round fold maps on closed smooth manifolds having the structures of bundles whose fibers are
 circles over given closed manifolds by applying P-operations under weaker condtions. We also study algebraic and differential topological properties of the
 resulting maps and the source manifolds.
\end{abstract}
\section{Introduction and terminologies}
\label{sec:1}
{\it Fold maps} are important in generalizing the theory of Morse functions and its application to studies of geometry of manifolds. Studies of
 such maps were started by Whitney (\cite{whitney}) and Thom (\cite{thom}) in the 1950's. A {\it fold map} is a smooth map
 whose singular points are of the form
$$(x_1, \cdots, x_m) \mapsto (x_1,\cdots,x_{n-1},\sum_{k=n}^{m-i}{x_k}^2-\sum_{k=m-i+1}^{m}{x_k}^2)$$ for two positive integers $m \geq n$ and an integer $0 \leq i \leq m-n+1$ (for each singular point, we can determine
 the integer $i$ uniquely and the integer is said to be the {\it index} of the point). A Morse function
 is regarded as a fold map (n=1).
  For a fold map from a closed smooth manifold of dimension $m$ into a smooth manifold of dimension $n \geq 1$
 (without boundary), the followings hold.
\begin{enumerate}
\item The {\it singular set}, which is defined as the set of all the singular points, and the set of all the singular points whose indices are $i$ are closed smooth submanifolds of dimension $n-1$ of the source manifold. 
\item The restriction map to the singular set is a smooth immersion of codimension $1$.
\end{enumerate}
We also note that if the restriction map to the singular set is an immersion with normal
 crossings, then it is {\it stable}. Stable maps are important in the theory of global singularity; see \cite{golubitsky guillemin} for example). \\
\ \ \ Since around the 1990s, some classes of fold maps easy to handle have been
 actively studied. For example, in \cite{burlet derham}, \cite{furuya porto}, \cite{saeki 2}, \cite{saeki sakuma} and \cite{sakuma}, {\it special
 generic} maps were studied. A {\it special generic} map is defined as a fold map such that the indices of singular points are always $0$. From Morse functions and their
 singular points (singular sets), we can know some topological properties such as Euler numbers and homology groups and from special generic
 maps and their singular points (singular sets), we can often know more precise information of their source manifolds such as
 the homotopy types and the homeomorphism and diffeomorphism types. For
 example, homotopy spheres are characterized as manifolds admitting Morse functions with two singular points except the case where the sphere is $4$-dimensional and special generic
 maps often restrict the diffemorphism types of homotopy spheres of source manifolds (Standard spheres always admit special generic maps into Euclidean spaces whose dimensions are not higher than those of the source manifolds). As another example, 
the diffeomorphism types of manifolds admitting special generic maps were determined under appropriate conditions. For such facts, see \cite{saeki 2} and \cite{saeki sakuma} for example. In \cite{sakuma}, Sakuma studied
 {\it simple} fold maps. A simple fold map is defined as a fold map such that the inverse image of each singular value does not have any connected component with more
 than one singular points (see also \cite{saeki}) and  special generic maps are simple, for example. In
 \cite{kobayashi saeki}, Kobayashi and Saeki
 investigated topology of {\it stable} maps including fold maps which are stable into the plane. In \cite{saeki suzuoka}, Saeki and Suzuoka
 found good properties of manifolds admitting stable maps such that the inverse images of regular values are always disjoint unions of spheres. \\
\ \ \ Later, in \cite{kitazawa 2}, {\it round} fold maps, which will be mainly studied in this paper, were introduced. A {\it round} fold map is defined as a fold
 map satisfying the followings.
\begin{enumerate}
\item The singular set is a disjoint union of standard spheres.
\item The restriction map to the singular set is an embedding.
\item The {\it singular value set}, which is defined as the set of all the singular values of the map, is a disjoint union of spheres embedded concentrically. 
\end{enumerate}

The $m$-dimensional standard
 sphere $S^m$ ($m \geq 2$) admits a round fold map whose singular set is connected into ${\mathbb{R}}^n$ for $2 \leq n \leq m$. Such
 a map is also a special generic map and some special generic maps on homotopy spheres are round fold maps whose singular
 sets are connected as discussed in \cite{saeki 2}.  \\
\ \ \ In \cite{kitazawa 2}, homology groups and homotopy groups of manifolds admitting round fold maps are studied. Some examples of round fold maps
 and the diffeomorphism types of their source manifolds are given by the author in \cite{kitazawa}, \cite{kitazawa 2}, \cite{kitazawa 4} and \cite{kitazawa 5}. For example, in the first two papers, we have
 obtained round fold maps on $m$-dimensional closed smooth manifolds having the structures of bundles over the $n$-dimensional ($n \geq 2$) standard sphere $S^n$ whose
 fibers are closed smooth manifolds and whose structure groups consist of diffeomorphisms. It has been
 a very difficult problem to construct explicit fold maps, although the existence problems for fold maps have been solved under a lot of situations (see \cite{eliashberg} and \cite
{eliashberg 2} for example). The author has given a lot of
 answers for the construction problems, which will make it easier to study topological properties of a lot of smooth manifolds
 by using fold maps being not so complex. \\
\ \ \ Here, we introduce terminologies on (fiber) bundles, which are important in this paper. In this paper, a {\it bundle} means
 a bundle whose fibers are diffeomorphic
 closed smooth manifolds and whose structure group consists of diffeomorphisms on the fibers. We call a bundle whose fiber is a closed
 smooth manifold $X$ an {\it $X$-bundle}. \\
\ \ \ We return to precise presentations about explicit round fold maps. In \cite{kitazawa 2} and \cite{kitazawa 4}, the author
 constructed round fold maps into ${\mathbb{R}}^n$ ($n \geq 2$) also on $m$-dimensional ($m>n$) closed
 smooth manifolds represented as connected
 sums of bundles over the $n$-dimensional standard sphere $S^n$ with fibers diffemorphic
 to the ($m-n$)-dimensional standard sphere $S^{m-n}$ under the assumption that $m \geq 2n$ holds. In
 \cite{kitazawa 5}, as new answers, we construct new round fold maps on closed smooth manifolds having the
 structures of smooth bundles over (exotic) spheres
 or more general manifolds. More precisely, we considered round fold maps having good algebraic and differential topological
 properties and bundles over the source manifolds of the maps with good topological properties and constructed new round fold
 maps on the bundles by using operations compatiable with the structures of bundles ({\it P-operations}). For example, we constructed round fold maps on manifolds having the structures of $S^1$-bundles ({\it circle bundles})
 over some smooth manifolds and obtain ones into ${\mathbb{R}}^n$ ($n \geq 2$) on $S^1$-bundles
 over manifolds represented as connected
 sums of smooth bundles over $S^n$ with fibers diffemorphic
 to $S^{m-n}$ under the assumption that $m \geq 2n$ and $m-n>2$ hold. \\
\ \ \ In this paper, we extend results of \cite{kitazawa 5} and give some applications. More precisely, we construct
 new examples of round fold maps on smooth manifolds
 having the structures of $S^1$-bundles over some smooth manifolds under weaker conditions
 by using methods similar to ones used in \cite{kitazawa 5}. In this paper, we also study algebraic and differential topological properties of obtained maps and their source manifolds precisely. \\
\ \ \ This paper is organized as the following. \\ 
\ \ \ In section \ref{sec:2}, we recall {\it round} fold maps and some terminologies
 on round fold maps such as {\it axes} and {\it proper cores}. We also introduce classes of round fold maps such as {\it $C^{\infty}$ L-trivial} round
 fold maps and {\it $C^{\infty}$ S-trivial} ones. A {\it $C^{\infty}$ L-trivial} round fold map is defined as
 a round fold map satisfying
 a kind of triviality on the topological type around each connected component of the singular value set and a {\it $C^{\infty}$ S-trivial} round fold map is
 defined as one satisfying a kind of triviality on the topological type around the singular value set. Such classes were first
 introduced in \cite{kitazawa 2} and \cite{kitazawa 3} by the author and the original definitions are a bit different from those of the present paper. We
 introduce examples
 of round fold maps in Example \ref{ex:1}. \\
\ \ \ In section \ref{sec:3}, we recall {\it P-operations} defined in \cite{kitazawa 5}, which are operations used to construct new round fold maps
 on manifolds having the structures of bundles over smooth manifolds admitting $C^{\infty}$ L-trivial round
 fold maps. We apply these operations
 to construct new round fold maps on $S^1$-bundles over manifolds admitting $C^{\infty}$ L-trivial round
 fold maps and prove Theorems \ref{thm:1}, \ref{thm:2}, \ref{thm:3} and \ref{thm:4}, which are regarded as extensions of results in \cite{kitazawa 5}, which are introduced as Propositions \ref{prop:3} and \ref{prop:4} in the present paper. \\ 
\ \ \ In section \ref{sec:4}, as applications, we construct new round fold maps by applying Propositions and Theorems on P-operations obtained in section \ref{sec:3} to round maps introduced in Example \ref{ex:1}. Furthermore, we study the local and global structures of the
 obtained maps. For example, we study problems whether these maps being $L$-trivial are $S$-trivial or not. We also study the diffeomorphism types of the soruce manifolds of obtained maps. \\  
\ \ \ In this paper, for a smooth map $c$, the {\it fiber}
 of a point in the target manifold means the inverse image of the point by the map $c$. A {\it regular fiber} means a fiber of a
 regular value of a smooth map. We define the {\it singular set} of a smooth map $c$ as the set consisting of all the singular points of $c$ and denote the set by $S(c)$. For the map $c$, the
 set $c(S(c))$ is said to be the {\it singular value set} of $c$ and the set ${\mathbb{R}}^n-c(S(c))$ is said to be the {\it regular value set} of $c$. \\
\ \ \ Throughout this paper, all the manifolds and maps between them are assumed to be smooth and of class $C^{\infty}$ unless otherwise stated. Last, in
 this paper, We assume that $M$ is a closed manifold of dimension $m$, that $N$ is a manifold of dimension $n$ without boundary
 and that $m \geq n \geq 1$ holds.

\section{Round fold maps}
\label{sec:2}

In this section, we review round fold maps. See also \cite{kitazawa 2} and \cite{kitazawa 3} for example.

\begin{Def}[round fold maps (\cite{kitazawa 3})]
\label{def:1}
$f:M \rightarrow {\mathbb{R}}^n$ ($m \geq n \geq 2$) is said to be a {\it round} fold map if $f$ is $C^{\infty}$ equivalent to
 a fold map $f_0:M \rightarrow {\mathbb{R}}^n$ such that the followings hold.

\begin{enumerate}
\item The singular set $S(f_0)$ is a disjoint union of ($n-1$)-dimensional standard spheres and consists of $l \in \mathbb{N}$ connected components.
\item The restriction map $f_0 {\mid}_{S(f_0)}$ is an embedding.
\item Let ${D^n}_r:=\{(x_1,\cdots,x_n) \in {\mathbb{R}}^n \mid {\sum}_{k=1}^{n}{x_k}^2 \leq r \}$. Then $f_0(S(f_0))={\sqcup}_{k=1}^{l} \partial {D^n}_k$ holds.  
\end{enumerate}

We call $f_0$ a {\it normal form} of $f$. We call a ray $L$ from $0 \in {\mathbb{R}}^n$ an {\it axis} of $f_0$ and
 ${D^n}_{\frac{1}{2}}$ the {\it proper core} of $f_0$. Suppose that for a round fold map $f$, its normal form $f_0$ and diffeomorphisms
 $\Phi:M \rightarrow M$ and $\phi:{\mathbb{R}}^n \rightarrow {\mathbb{R}}^n$, the relation $\phi \circ f=f_0 \circ \Phi$ holds. Then
 for an axis $L$ of $f_0$, we also call ${\phi}^{-1}(L)$ an {\it axis} of $f$ and for the proper core ${D^n}_{\frac{1}{2}}$ of $f_0$, we
 also call ${\phi}^{-1}({D^n}_{\frac{1}{2}})$ a {\it proper core} of $f$. 
\end{Def}

For a round fold map $f:M \rightarrow {\mathbb{R}}^n$ and a connected component of the singular value set $C$, there exists
 a closed tubular neighborhood $N(C)$, which is regarded as a trivial $[-1,1]$-bundle over $C$ ($C$ corresponds to the image $C \times \{0\} \subset C \times [-1,1]$ of the section of the
 trivial bundle $N(C)$ regarded as $C \times [-1,1]$), such that the composition of the map $f {\mid}_{f^{-1}(N(C))}:f^{-1}(N(C)) \rightarrow N(C)$ and the projection to each connected
 component of the boundary $\partial N(C)$ gives $f^{-1}(N(C))$ the structure of a bundle. In the case where the inverse image of a connected
 component $C^{\prime}$ of the boundary $\partial N(C)$ is non-empty, the inverse image is a disjoint union of connected components
 of the boundary $\partial f^{-1}(N(C))$ and if we restrict the map giving $f^{-1}(N(C))$ the structure of
 a bundle over the connected component $C^{\prime}$ to the set $f^{-1}({C}^{\prime})$, then it coincides with the map $f$ on $f^{-1}(C^{\prime})$. \\
\ \ \ For example, if a connected component $C$ is the image of a connected component of the singular set consisting of singular points of index $0$, then the composition of the map $f {\mid}_{f^{-1}(N(C))}:f^{-1}(N(C)) \rightarrow N(C)$ and the projection to each connected
 component of the boundary $\partial N(C)$ gives $f^{-1}(N(C))$ the structure of a bundle whose fiber is a standard closed disc and whose structure group is a
 subgroup of linear transformations on the disc. See also \cite{saeki 2} for example. \\

\begin{Def}
\label{def:2}
Let $f:M \rightarrow {\mathbb{R}}^n$ be a round fold map. Assume that for any connected component $C$ of $f(S(f))$ and a small closed tubular neighborhood $N(C)$ of $C$ such that $\partial N(C)$ is the disjoint
 union of two connected components $C_a$ and $C_b$, $f^{-1}(N(C))$ has the structures
 of isomorphic trivial bundles whose fibers are diffeomorphic to a closed manifold $F_C$ over $C_a$ and $C_b$ and $f {\mid}_{f^{-1}(C_a)}:f^{-1}(C_a) \rightarrow C_a$ and $f {\mid}_{f^{-1}(C_b)}:f^{-1}(C_b) \rightarrow C_b$ give the structures of
 subbundles of the bundles $f^{-1}(N(C))$ {\rm (}$f^{-1}(C_a)$ or $f^{-1}(C_b)$ may be empty{\rm )}. Then, $f$ is said
 to be {\it $C^{\infty}$ L-trivial}. We call a fiber $F_C$ of the bundles $f^{-1}(N(C))$ a {\it normal fiber of $C$ corresponding
 to the bundles $f^{-1}(N(C))$}. \\
\end{Def}

\ \ \ Let $f$ be a normal form of a round fold map and $P_1:={D^{n}}_{\frac{1}{2}}$. We set $E:=f^{-1}(P_1)$ and $E^{\prime}:=M-f^{-1}({\rm Int} P_1)$. We set
 $F:=f^{-1}(p)$
 for $p \in \partial P_1$. We put $P_2:={\mathbb{R}}^n-{\rm Int} P_1$. Let $f_1:=f {\mid}_{E}:E \rightarrow P_1$ if $F$ is
 non-empty and let $f_2:=f {\mid}_{{E}^{\prime}}:{E}^{\prime} \rightarrow P_2$. \\
\ \ \ $f_1$ gives the structure of a trivial bundle over $P_1$ and ${f_1} {\mid}_{\partial E}:\partial E \rightarrow \partial P_1$ gives
 the structure of a trivial bundle
 over $\partial P_1$ if $F$ is non-empty. $f_2 {\mid}_{\partial {E}^{\prime}}:\partial {E}^{\prime} \rightarrow \partial P_2$
 gives the structure of a trivial bundle over $\partial P_2$ if $F$ is non-empty. \\
\ \ \ We can give ${E}^{\prime}$ the structures of bundles over $\partial P_2$ as follows. \\ 
\ \ \ Since for ${\pi}_P(x):=\frac{1}{2} \frac{x}{|x|}$ ($x \in P_2$), ${\pi}_P \circ f {\mid}_{{E}^{\prime}}$ is
 a proper submersion, this map gives ${E}^{\prime}$ the structure of
 a $C^{\infty}$ $f^{-1}(L)$-bundle over $\partial P_2$ (apply Ehresmann's fibration
 theorem \cite{ehresmann}). \\
\ \ \ Considering these observations, we introduce the following classes. For the classes, see also \cite{kitazawa 2}, in which such classes were
 introduced first. These classes are a bit different from ones here. 

\begin{Def}
\label{def:3}
Let $f:M \rightarrow {\mathbb{R}}^n$ be a round fold map. \\
\ \ \ For a proper core $P$ and an axis $L$ of $f$, We give the manifold $f^{-1}({\mathbb{R}}^n-{\rm Int} P)$ the structure of
 a bundle over $\partial P$ whose fiber is diffeomorphic to $f^{-1}(L)$ such
 that $f {\mid}_{f^{-1}(\partial P)}:f^{-1}(\partial P) \rightarrow \partial P$ gives the structure of a subbundle
 of the previous bundle $f^{-1}({\mathbb{R}}^n-{\rm Int} P)$ ($f^{-1}(\partial P)$ may be empty). We call
 such a bundle a {\it surrounding bundle} of $f$. \\
\ \ \ If a surrounding bundle is a trivial bundle, then
 $f$ is said to be {\it $C^{\infty}$ S-trivial}.
%
%
\end{Def}

We easily obtain a round fold map which is $C^{\infty}$ L-trivial and $C^{\infty}$ S-trivial as shown in the following (see \cite{kitazawa 2} and \cite{kitazawa 3} for example). \\
\ \ \ Let $\bar{M}$ be a compact manifold with non-empty boundary $\partial \bar{M}$. Let $a \in \mathbb{R}$. Then, there exists a Morse
 function $\tilde{f}:\bar{M} \rightarrow [a,+\infty)$ satisfying the followings. 
\begin{enumerate}
\item $a$ is the minimum of $\tilde{f}$ and ${\tilde{f}}^{-1}(a)=\partial \bar{M}$ holds.
\item All the singular points of $\tilde{f}:\bar{M} \rightarrow [a,+\infty)$ are in $\bar{M}-\partial \bar{M}$ and at distinct singular points, the values are always distinct.
\end{enumerate}
Let $\Phi:\partial (\bar{M} \times \partial ({\mathbb{R}}^n-{\rm Int} D^n)) \rightarrow \partial (\partial \bar{M} \times D^n)$
 and $\phi:\partial ({\mathbb{R}}^n-{\rm Int} D^n) \rightarrow \partial D^n$ be diffeomorphisms. Let $p_1:\partial \bar{M} \times \partial ({\mathbb{R}}^n-{\rm Int} D^n) \rightarrow \partial ({\mathbb{R}}^n-{\rm Int} D^n)$ and
 $p_2:\partial \bar{M} \times \partial D^n \rightarrow \partial D^n$ be
 the canonical projections. Suppose that the following diagram commutes. 

$$
\begin{CD}
\partial \bar{M} \times \partial ({\mathbb{R}}^n-{\rm Int} D^n)  @> \Phi >> \partial \bar{M} \times \partial D^n \\
@VV p_1 V @VV p_2 V \\
\partial ({\mathbb{R}}^n-{\rm Int} D^n) @> \phi >> \partial D^n
\end{CD}
$$

By using the diffeomorphism $\Phi$, we construct $M:=(\partial \bar{M} \times D^n) {\bigcup}_{\Phi} (\bar{M} \times \partial ({\mathbb{R}}^n-{\rm Int} D^n))$. 
Let $p:\partial \bar{M} \times D^n \rightarrow D^n$ be the canonical projection. Then gluing the two
 maps $p$ and $\tilde{f} \times {{\rm id}}_{S^{n-1}}$ together by using the two diffeomorphisms $\Phi$ and $\phi$,
 we obtain a round fold map $f:M \rightarrow {\mathbb{R}}^n$. \\
\ \ \ If $\bar{M}$ is a compact manifold without boundary, then there exists a Morse
 function $\tilde{f}:\bar{M} \rightarrow [a,+\infty)$ such that $\tilde{f}(\bar{M}) \subset (a,+\infty)$ and that at distinct singular points, the values are always distinct. We are enough
 to consider $\tilde{f} \times {\rm id}_{S^{n-1}}$
 and embed $[a,+\infty) \times S^{n-1}$ into ${\mathbb{R}}^n$ to construct a round fold map whose source manifold is $\bar{M} \times S^{n-1}$. \\
\ \ \ We call this construction of a round fold map a {\it trivial spinning construction}. 

We introduce some known examples of round fold maps with their source manifolds.

\begin{Ex}
\label{ex:1}
\begin{enumerate}
\item
\label{ex:1.1}
Special generic maps with singular sets diffeomorphic to standard spheres into Euclidean spaces whose dimensions are larger than $1$ such that the restriction maps
 to the singular sets are embeddings are round fold maps. Such maps are $C^{\infty}$ L-trivial and $C^{\infty}$ S-trivial. The source manifolds of such maps are
 always homotopy spheres. Let $M$ be a closed $m$-dimensional manifold admitting such a map $f:M \rightarrow {\mathbb{R}}^n$ ($m \geq n \geq 2$). For a small closed tubular neighborhood $N(f(S(f)))$ of the set $f(S(f))$, $f^{-1}(N(f(S(f))))$ has the structures of trivial bundles as in Definiiton \ref{def:2} such that
 a normal fiber $F_{f(S(f))}$ of $f(S(f))$ corresponding to the bundles $f^{-1}(N(f(S(f))))$ is diffeomorphic to the standard closed disc $D^{m-n+1}$. \\
\ \ \ The $m$-dimensional standard sphere $S^m$ admits such a map into ${\mathbb{R}}^n$ for $m \geq n \geq 2$. In section 5 of \cite{saeki 2}, on every homotopy sphere whose dimension is
 $m$ ($2 \leq m <4$, $m>4$) and the $4$-dimensional standard sphere $S^4$, such maps into ${\mathbb{R}}^2$ are constructed. 
\item 
\label{ex:1.2}
[\cite{kitazawa 2}, \cite{kitazawa 4}, \cite{kitazawa 5}]
Let $F \neq \emptyset$ be a closed and connected manifold. Let $M$ have the structure of an $F$-bundle
 over $S^n$. Then, $M$ admits a round fold map $f:M \rightarrow {\mathbb{R}}^n$ such that the followings hold.
\begin{enumerate}
\item $f$ is $C^{\infty}$ S-trivial.
\item For an axis $L$ of $f$, $f^{-1}(L)$ is diffeomorphic to $F \times [0,1]$.
\item Two connected components of the fiber of a point in a proper core of $f$ is regarded as fibers of the $F$-bundle
 over $S^n$.
\end{enumerate}
\item
\label{ex:1.3}[\cite{kitazawa}, \cite{kitazawa 2}, \cite{kitazawa 4}]
Let $m, n \in \mathbb{N}$, $n \geq 2$ and $m \geq 2n$. \\
\ \ \ Let $M$ be an $m$-dimensional manifold represented as the connected sum of $l \in \mathbb{N}$ closed oriented manifolds having
 the sturctures of $S^{m-n}$-bundles over $S^n$ admits
 a $C^{\infty}$ L-trivial round fold map $f$ into ${\mathbb{R}}^n$ such that the followings hold.
\begin{enumerate}
\item All the regular fibers of $f$ are disjoint unions of finite copies of $S^{m-n}$.
\item The number of connected components of the singular set $S(f)$ and the number of connected components of the fiber of a point in a proper core of $f$ are $l$.
\item For any connected component $C$ of $f(S(f))$ and a small closed tubular neighborhood $N(C)$ of $C$, $f^{-1}(N(C))$ has the structures of trivial
 bundles as in Definition \ref{def:2} such that a normal
 fiber $F_C$ of $C$ corresponding to the bundles $f^{-1}(N(C))$ is diffeomorphic to a disjoint union of a finite number of the following manifolds.
\begin{enumerate}
\item The {\rm (}$m-n+1${\rm )}-dimensional standard closed disc $D^{m-n+1}$.
\item The standard {\rm (}$m-n+1${\rm )}-dimensional sphere $S^{m-n+1}$ with the interior of a union of disjoint three {\rm (}$m-n+1${\rm )}-dimensional standard
 closed discs removed.
\end{enumerate}
\item All the connected components of the fiber of a point in a proper core of $f$ are regarded as fibers of the $S^{m-n}$-bundles
 over $S^n$ and a fiber of any $S^{m-n}$-bundle over $S^n$ appeared in the connected sum is regarded as a connected component of the
 fiber of a point in a proper core of $f$.  
\end{enumerate}
\item
\label{ex:1.4}
Special generic maps into ${\mathbb{R}}^n$ whose singular sets are disjoint unions of two standard spheres such that the restriction maps
 to the singular sets are embeddings are round fold maps. Let $M$ be a closed $m$-dimensional
 manifold admitting such a map $f:M \rightarrow {\mathbb{R}}^n$ ($m \geq n \geq 2$). Since $f$ is special generic, for each connected component $C$ of $f(S(f))$, there exists a small closed tubular neighborhood $N(C)$ such that $f^{-1}(N(C))$ has the structures
 of isomorphic trivial bundles whose fibers are diffeomorphic to the standard closed disc $D^{m-n+1}$ as in Definition \ref{def:2}. \\
\ \ \ We easily know that the $m$-dimensional manifolds admitting such maps into ${\mathbb{R}}^n$ ($m \geq n \geq 2$) being $C^{\infty}$ L-trivial are always
 homeomorphic to $S^{n-1} \times S^{m-n+1}$ and that $S^{n-1} \times S^{m-n+1}$ admits such a map being $C^{\infty}$ S-trivial and $C^{\infty}$ L-trivial
 by virtue of a trivial spinning construction. Moreover, for example, in the
 case where $m-n=0,1,2,3$ holds, the source manifold of such
 a map is diffeomorphic to the product $S^{n-1} \times S^{m-n+1}$. It easily follows from the fact that the diffeomorphism groups of the
 spheres $S^1$, $S^2$ and $S^3$ are homotopy equivalent to the groups of all the linear transformations on the spheres (\cite{hatcher}, \cite{smale}). \\
\ \ \ For explicit theories of such special generic maps, see also \cite{saeki 2}.
\end{enumerate}
\end{Ex}


\section{P-operations and constructions of round fold maps of new types on manifolds having the structures of $S^1$-bundles}
\label{sec:3}
\subsection{P-operations}
The following proposition has been shown in \cite{kitazawa 5}.

\begin{Prop}[\cite{kitazawa 5}]
\label{prop:1}
Let $M$ be a closed manifold of dimension $m$ and $f:M \rightarrow {\mathbb{R}}^n$ be a $C^{\infty}$ L-trivial round
 fold map {\rm (}$m \geq n \geq 2${\rm )}. Let $F$ {\rm (}$\neq \emptyset${\rm )} be a closed manifold and ${M}^{\prime}$ be a closed manifold having the structure of an $F$-manifold
 over $M$ such that for any connected component $C$ of $f(S(f))$ and a small closed tubular neighborhood $N(C)$ of $C$, the
 restriction to $f^{-1}(N(C))$ is a trivial bundle. Then on ${M}^{\prime}$ there exists a $C^{\infty}$ L-trivial round
 fold map $f^{\prime}:{M}^{\prime} \rightarrow {\mathbb{R}}^n$. 
\end{Prop}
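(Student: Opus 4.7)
The plan is to construct $f'$ by perturbing the non-fold candidate $f \circ \pi$ (where $\pi : M' \to M$ is the bundle projection) by a Morse function on $F$ in small tubular neighborhoods of the singular value spheres of $f$, where the hypothesized triviality of the $F$-bundle is available. First I would fix pairwise disjoint small closed tubular neighborhoods $N(C_k) \subset \mathbb{R}^n$ of the connected components $C_1, \ldots, C_l$ of $f(S(f))$, and write $r_k$ for the radius of the concentric sphere $C_k$. The $C^{\infty}$ L-triviality of $f$ furnishes identifications $f^{-1}(N(C_k)) \cong F_{C_k} \times C_k$ under which $f$ corresponds to $(x,c) \mapsto (c, \phi_k(x))$ for a Morse function $\phi_k : F_{C_k} \to [-\epsilon, \epsilon]$ all of whose critical values equal $0$. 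Combining with the hypothesized triviality of the $F$-bundle over these pieces yields $M'|_{f^{-1}(N(C_k))} \cong F \times F_{C_k} \times C_k$.

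Next I would fix Morse functions $g_k : F \to (-\delta, \delta)$ with pairwise distinct critical values (with $\delta \ll \epsilon$) and a cutoff function $\chi : \mathbb{R}^n \to [0,1]$ supported in $\bigsqcup_k N(C_k)$ and equal to $1$ on a neighborhood of $f(S(f))$. Writing points of $\mathbb{R}^n \setminus \{0\}$ in polar form $r\omega$ with $r > 0$ and $\omega \in S^{n-1}$, for $z \in M'$ with $\pi(z) \in f^{-1}(N(C_k))$ set
\begin{equation*}
f'(z) := \bigl( r(f(\pi(z))) + \chi(f(\pi(z))) \, g_k(y(z)) \bigr) \cdot \omega(f(\pi(z))),
\end{equation*}
where $y(z) \in F$ denotes the $F$-coordinate of $z$ in the local trivialization of $M'$, and set $f'(z) := f(\pi(z))$ otherwise. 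Since $\chi$ vanishes on $\partial N(C_k)$, the two formulas agree on the overlap and $f'$ is globally smooth.

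A local computation combining the fold normal form of $f$ with the Morse lemma for $g_k$ shows that a point $(y_0, x_0)$ is a fold singular point of $f'$ of index $i + j$ precisely when $x_0 \in S(f)$ has index $i$ and $y_0 \in \mathrm{crit}(g_k)$ has Morse index $j$, and that $f'$ is a submersion elsewhere. Consequently $S(f')$ is a disjoint union of standard $(n-1)$-spheres (copies of the components of $S(f)$ indexed by $\mathrm{crit}(g_k)$), and $f'|_{S(f')}$ sends each such component diffeomorphically onto the concentric sphere of radius $r_k + g_k(y_0)$. The required $C^{\infty}$ L-triviality of $f'$ around each of these new singular value spheres is inherited from the product trivialization already present on $f^{-1}(N(C_k))$.

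The main obstacle will be the combinatorial bookkeeping in the second step: the $g_k$ must be chosen so that the numbers $r_k + g_k(y_0)$ (over all $k$ and all $y_0 \in \mathrm{crit}(g_k)$) are pairwise distinct, which is what forces the singular value set of $f'$ to be a disjoint union of concentric spheres and $f'|_{S(f')}$ to be an embedding, as required by Definition \ref{def:1}. The remaining verifications --- smoothness across the gluing, the fold normal form at the new singular points, and submersivity on the complement --- are purely local and reduce to straightforward calculations in the chosen product trivializations.
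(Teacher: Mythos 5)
Your construction is a genuine alternative to the paper's. The paper does not perturb $f\circ \pi$: it cuts the target into the proper core ${D^n}_{\frac{1}{2}}$ and the annuli $P_k={D^n}_{k+\frac{1}{2}}-{\rm Int}\,{D^n}_{k-\frac{1}{2}}$, observes that $f^{-1}(P_k)$ is a trivial bundle over $S^{n-1}$ with fiber $E_k$ (the inverse image of a radial arc spanning the whole annulus), writes $M^{\prime}$ as a gluing of the pieces $f^{-1}(P_k)\times F\cong S^{n-1}\times E_k\times F$, and then \emph{discards} $f$ on each such piece, installing instead ${\rm id}_{S^{n-1}}\times \tilde{f_k}$ for a freshly chosen Morse function $\tilde{f_k}:E_k\times F\rightarrow[k-\frac{1}{2},k+\frac{1}{2}]$ with prescribed boundary behaviour and distinct critical values. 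That replacement strategy only ever uses the trivial-bundle structure guaranteed by Definition \ref{def:2}, and it is also more flexible (any admissible $\tilde{f_k}$ works, not only one of split form $\phi_k+g_k$). Your version keeps $f\circ\pi$ globally and localizes the modification, which makes the relation between $f^{\prime}$ and $f$ more transparent; the price is the quantitative check that $1+\chi^{\prime}g_k\neq 0$ in the region where the cutoff is neither $0$ nor $1$, which your condition $\delta\ll\epsilon$ does handle but which you should state explicitly, since otherwise spurious singular points appear in the transition annuli.

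The one real gap is your opening identification: Definition \ref{def:2} gives you a trivialization $f^{-1}(N(C_k))\cong C_k\times F_{C_k}$ under which the \emph{composition of $f$ with the radial projection} $N(C_k)\rightarrow C_a$ becomes the product projection, i.e.\ $f(c,x)=(c,\psi_k(c,x))$ with $\psi_k$ a priori depending on $c$ --- it does not assert that $f$ itself is the product map $(c,x)\mapsto(c,\phi_k(x))$. Your fold-point computation survives this (the Hessian in the $(x,y)$-directions is still block diagonal and nondegenerate at the critical locus, so the new singular points are folds of index $i+j$), but your last claim, that the $C^{\infty}$ L-triviality of $f^{\prime}$ is ``inherited from the product trivialization,'' genuinely uses the $c$-independence of $\phi_k$: with a $c$-dependent family $\psi_k(c,\cdot)+g_k$, the inverse image of a small tubular neighborhood of a new singular value sphere is only a fiber bundle over $S^{n-1}$ with fibers the sublevel-type sets $(\psi_k(c,\cdot)+g_k)^{-1}([g_k(y_0)-\eta,g_k(y_0)+\eta])$, and its triviality is exactly what remains to be proved. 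You must either justify that the trivialization can be improved so that $f$ is a product map over $N(C_k)$ (a normal-form statement not contained in Definition \ref{def:2} as written), or fall back on the paper's device of replacing the map wholesale on each piece, where L-triviality of the result is immediate from the product form ${\rm id}_{S^{n-1}}\times\tilde{f_k}$. The bookkeeping you single out as the main obstacle (making the values $r_k+g_k(y_0)$ pairwise distinct) is in fact the easy part.
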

In the proof of this proposition, we use the notation ${D^n}_r:=\{(x_1,\cdots,x_n) \in {\mathbb{R}}^n \mid {\sum}_{k=1}^{n}{x_k}^2 \leq r \}$ appeared in Definition \ref{def:1}. 
\begin{proof}[Proof of Proposition \ref{prop:3}]
\ \ \ We assume that $f(M)$ is diffeomorphic to $D^n$. We can prove the theorem similarly if $f(M)$ is not diffeomorphic to $D^n$. \\
\ \ \ We may assume that $f:M \rightarrow {\mathbb{R}}^n$ is a normal form. Let $S(f)$ consist of $l$ connected
 components. Set $P_0:={D^n}_{\frac{1}{2}}$ and $P_k:={D^n}_{k+\frac{1}{2}}-{\rm Int} {D^n}_{k-\frac{1}{2}}$ for an integer $1 \leq k \leq l$. Then ${f}^{-1}(P_k)$ has the
 structure of a trivial $C^{\infty}$ bundle
 over $\partial {D^n}_{k-\frac{1}{2}}$ or $\partial {D^n}_{k+\frac{1}{2}}$ with fibers diffeomorphic to a compact $C^{\infty}$ manifold, which we denote by $E_k$, such
 that $f {\mid}_{{f^{-1}}(\partial {D^n}_{k-\frac{1}{2}})}$ and $f {\mid}_{f^{-1}(\partial {D^n}_{k+\frac{1}{2}})}$ give the
 structures of subbundles (we denote the fibers of two subbundles by ${E_k}^{1} \subset E_k$ and ${E_k}^2 \subset E_k$, respectively) for $1 \leq k \leq l$. For any integer $1 \leq k \leq l$ and a diffeomorphism ${\phi}_k$ from
 $f^{-1}(\partial {D^n}_{k-\frac{1}{2}}) \subset f^{-1}(P_k)$ onto $f^{-1}(\partial {D^n}_{k-\frac{1}{2}}) \subset f^{-1}(P_{k-1})$ regarded as a
 bundle isomorphism between the two trivial bundles over standard spheres inducing the identification between the base spaces, $M$ is regarded as $(\cdots ((f^{-1}({D^n}_{\frac{1}{2}})) {\bigcup}_{{\phi}_1} f^{-1}(P_1)) \cdots) {\bigcup}_{{\phi}_l} f^{-1}(P_l)$
 and for any integer $1 \leq k \leq l$ and a diffeomorphism ${\Phi}_k$ from
 $f^{-1}(\partial {D^n}_{k-\frac{1}{2}}) \times F \subset f^{-1}(P_k) \times F$ onto $f^{-1}(\partial {D^n}_{k-\frac{1}{2}}) \times F \subset f^{-1}(P_{k-1}) \times F$ regarded as a
 bundle isomorphism between the two trivial $F$-bundles inducing ${\phi}_k$, ${M}^{\prime}$ is
 regarded as
 $(\cdots ((f^{-1}({D^{n}}_{\frac{1}{2}}) \times F) {\bigcup}_{{\Phi}_1} (f^{-1}(P_1) \times F)) \cdots) {\bigcup}_{{\Phi}_l} ({f^{-1}}(P_l) \times F)$. We construct a map on $f^{-1}(P_k) \times F$. This manifold has the structure of a
 trivial bundle over $\partial {D^n}_{k-\frac{1}{2}}$ or $\partial {D^n}_{k+\frac{1}{2}}$ with fibers diffeomorphic to $E_k \times F$. On $E_k \times F$ there exists
 a Morse function $\tilde{f_k}$ such that the followings hold.

\begin{enumerate}
\item $\tilde{f_k}(E_k \times F) \subset [k-\frac{1}{2},k+\frac{1}{2}]$ and $\tilde{f_k}({\rm Int} (E_k \times F)) \subset (k-\frac{1}{2},k+\frac{1}{2})$ hold. 
\item $\tilde{f_k}({E_k}^1 \times F)=\{k-\frac{1}{2}\}$ holds if ${E_k}^1 \times F$ is non-empty. 
\item $\tilde{f_k}({E_k}^2 \times F)=\{k+\frac{1}{2}\}$ holds if ${E_k}^2 \times F$ is non-empty.
\item Singular points of $\tilde{f_k}$ are in the interior of $E_k \times F$ and at two distinct singular points, the values are always distinct.
\end{enumerate}

We obtain a map ${\rm id}_{S^{n-1}} \times \tilde{f_k}:S^{n-1} \times E_k \times F \rightarrow S^{n-1} \times [k-\frac{1}{2},k+\frac{1}{2}]$. We can identify $S^{n-1} \times [k-\frac{1}{2},k+\frac{1}{2}]$
 with $P_k={D^n}_{k+\frac{1}{2}}-{\rm Int} {D^n}_{k-\frac{1}{2}}$ by identifying $(p,t) \in S^{n-1} \times [k-\frac{1}{2},k+\frac{1}{2}]$ with $tp \in P_k$ where we regard $S^{n-1}$ as the unit sphere of dimension $n-1$. 
 By gluing the composition of the projection from $f^{-1}({D^n}_{\frac{1}{2}}) \times F$ onto $f^{-1}({D^n}_{\frac{1}{2}})$ and $f {\mid}_{f^{-1}({D^n}_{\frac{1}{2}})}:f^{-1}({D^n}_{\frac{1}{2}}) \rightarrow {D^n}_{\frac{1}{2}}$ and the family
 $\{{\rm id}_{S^{n-1}} \times \tilde{f_k}\}$ together by using the family $\{{\Phi}_k \}$ and the family of identifications in the target manifold ${\mathbb{R}}^n$, we obtain
 a new round fold map ${f}^{\prime}:M^{\prime} \rightarrow {\mathbb{R}}^n$. 
\end{proof}

In the proof, from $f:M \rightarrow {\mathbb{R}}^n$, we obtain ${f}^{\prime}:M^{\prime} \rightarrow {\mathbb{R}}^n$. We call the operation of
 constructing ${f}^{\prime}$ from $f$ a {\it P-operation by $F$ to $f$}. \\
\ \ \ For example, for any closed manifold $F \neq \emptyset$ and on any manifold admitting the structure of a $F$-bundle over the $n$-dimensional ($n \geq 2$) standard sphere, we obtain a round fold map as
 introduced in Example \ref{ex:1} (\ref{ex:1.2}) by a P-operation by $F$ to a special generic map from $S^n$ into ${\mathbb{R}}^n$ presented in Example \ref{ex:1} (\ref{ex:1.1}). 

\subsection{P-operations by $S^1$ to round fold maps and their applications}
 We apply $P$-operations by $S^1$ to obtain round fold maps which do not appear in \cite{kitazawa 5}. Before this, we
 recall fundamental terms and facts on $S^1$-bundles. In this paper, for a topological space $X$, we denote the $k$-th (co)homology
 group of $X$ whose coefficient ring is $R$ by $H_k(X;R)$ (resp. $H^k(X;R)$). \\
\ \ \ It is well-known that $S^1$-bundles are regarded as bundles whose structure groups consist of linear transformations on the circle and regarded as the 2nd orthogonal group $O(2)$. An $S^1$-bundle is said
 to be {\it orientable} if the structure group consist of orientation preserving linear transformations and regarded as the 2nd rotation group $SO(2) \subset O(2)$. We can consider
 {\it Stiefel-Whitney classes} of $S^1$-bundles, which are cohomology classes of the base spaces whose coefficient rings are $\mathbb{Z}/2\mathbb{Z}$, including their {\it 1st Stiefel-Whitney classes} ({\it 2nd Stiefel-Whitney classes}), which are 1st cohomology classes (resp. 2nd cohomology classes) of the base spaces with coefficients $\mathbb{Z}/2\mathbb{Z}$. If an $S^1$-bundle is
 orientable, then we can orient the bundle and consider its {\it Euler class}, which is a 2nd cohomology class of the base space whose coefficient ring is $\mathbb{Z}$. \\
\ \ \ For an oriented $S^1$-bundle over a $2$-dimensional oriented closed and connected manifold, we can consider the pairing of the Euler class and the fundamental class of the
 base space and call the obtained integer the {\it Euler number} of the bundle. 
\ \ \ We introduce
 known facts on classifications of $S^1$-bundles without proofs.

\begin{Prop}
\label{prop:2}
Let $X$ be a topological space.
\begin{enumerate}
\item
\label{prop:2.1}
 The 1st Stiefel-Whitney class $\alpha \in H^1(X;\mathbb{Z}/2\mathbb{Z})$ of an $S^1$-bundle over $X$ vanishes if and only if the bundle is orientable.
\item
\label{prop:2.2}
 For any $\alpha \in H^2(X;\mathbb{Z})$, there exists an oriented $S^1$-bundle over $X$ whose Euler class is $\alpha$. 
\item
\label{prop:2.3}
 Two oriented $S^1$-bundles on $X$ are isomorphic if the Euler classes are same.
\item
\label{prop:2.4}
 Two oriented $S^1$-bundles over a $2$-dimensional oriented closed and connected manifold are isomorphic if and
 only if the Euler numbers of the bundles coincide. Furthermore, manifolds admitting the structures
 of $S^1$-bundles over a $2$-dimensional oriented closed and connected manifolds are homeomorphic {\rm (}diffeomorphic{\rm )} if and
 only if the absolute values of the Euler numbers of the bundles coincide. The 1st homology group and the 2nd cohomology group of the
 manifold admitting an oriented $S^1$-bundle such that the absolute value of the Euler number is $k \in \mathbb{N} \bigcup \{0\}$ with coefficient
 ring $\mathbb{Z}$ are isomorphic to the direct sum of the 1st
 {\rm (}co{\rm )}homology group of the base space with coefficient ring $\mathbb{Z}$ and $\mathbb{Z}/k\mathbb{Z}$. The latter part
 of this direct sum is generated by the 1st homology class represented by a fiber of the bundle. Furthermore, for any section of
 the restriction of the bundle to any 1-skelton representing some element of 1st homology group of the base space with coefficient
 ring $\mathbb{Z}$, the image of the section represents an element of the 1st homology group before and if we consider
 the direct sum here, the 1st homology class is of the form $(x.0)$.   
%
\end{enumerate}
\end{Prop}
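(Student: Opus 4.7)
The plan is to deduce all four parts from the classifying-space picture: $S^1$-bundles over $X$ are classified by $[X, BO(2)]$, oriented $S^1$-bundles by $[X, BSO(2)]$, and $BSO(2) \simeq \mathbb{CP}^{\infty} \simeq K(\mathbb{Z},2)$. Under these identifications, the Euler class is the pullback of the fundamental generator of $H^2(K(\mathbb{Z},2);\mathbb{Z})$, and the first Stiefel--Whitney class is the image of the classifying map under the fibration sequence $BSO(2) \to BO(2) \to K(\mathbb{Z}/2\mathbb{Z},1)$ induced by the quotient $O(2)/SO(2) = \mathbb{Z}/2\mathbb{Z}$.

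From this set-up, parts (\ref{prop:2.1})--(\ref{prop:2.3}) are essentially formal. Part (\ref{prop:2.1}) holds because the classifying map $X \to BO(2)$ lifts to $BSO(2)$ exactly when its composition with $BO(2) \to K(\mathbb{Z}/2\mathbb{Z},1)$ is null-homotopic, i.e.\ when $w_1$ vanishes. Parts (\ref{prop:2.2}) and (\ref{prop:2.3}) are then the surjectivity and injectivity of the bijection $[X, K(\mathbb{Z},2)] \cong H^2(X;\mathbb{Z})$.

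For part (\ref{prop:2.4}), let $\Sigma$ be the oriented base surface and $M$ the total space. Since $H^2(\Sigma;\mathbb{Z}) \cong \mathbb{Z}$ is generated by the Poincar\'e dual of a point, the Euler class is uniquely determined by its pairing with $[\Sigma]$, which is by definition the Euler number; the oriented-isomorphism iff then follows from (\ref{prop:2.3}). Reversing the orientation of a bundle negates its Euler class but does not alter the diffeomorphism type of the total space, so bundles with Euler numbers $k$ and $-k$ have diffeomorphic total spaces. The (co)homology description comes from the Gysin sequence
\[
0 \to H^1(\Sigma;\mathbb{Z}) \to H^1(M;\mathbb{Z}) \to \mathbb{Z} \xrightarrow{\times k} \mathbb{Z} \to H^2(M;\mathbb{Z}) \to H^1(\Sigma;\mathbb{Z}) \to 0,
\]
using that $\smile e : H^0(\Sigma;\mathbb{Z}) \to H^2(\Sigma;\mathbb{Z})$ is multiplication by $k$. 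Since $H^1(\Sigma;\mathbb{Z})$ is free, the resulting extension splits, so $H^2(M;\mathbb{Z}) \cong H^1(\Sigma;\mathbb{Z}) \oplus \mathbb{Z}/k\mathbb{Z}$, and Poincar\'e duality on the closed oriented $3$-manifold $M$ transports this to $H_1(M;\mathbb{Z})$. Conversely, different values of $|k|$ give non-isomorphic torsion subgroups of $H_1(M;\mathbb{Z})$, precluding a homeomorphism.

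I expect the main subtlety to be the final assertion that sections of the pulled-back bundle over a $1$-skeleton representing a class in $H_1(\Sigma;\mathbb{Z})$ lift it into the free summand of $H_1(M;\mathbb{Z})$, giving the normal form $(x,0)$. The idea is that any oriented $S^1$-bundle over a graph is trivial (since $BSO(2)$ is simply connected), so global sections exist over a $1$-skeleton of $\Sigma$; using these sections one produces a geometric splitting of the projection $H_1(M;\mathbb{Z}) \to H_1(\Sigma;\mathbb{Z})$, identifying the free summand with the image of the section-induced push-forward. Combined with the observation that the $\mathbb{Z}/k\mathbb{Z}$ summand is generated by the image of a fiber (the transgression target in the Serre spectral sequence), this yields the claimed direct sum description together with the section-lift normal form.
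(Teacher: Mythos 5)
The paper does not prove Proposition \ref{prop:2} at all: it is introduced explicitly as a list of ``known facts on classifications of $S^1$-bundles without proofs,'' with a pointer to Milnor--Stasheff. So there is nothing in the text to compare against; your write-up is a proof where the paper supplies none. On its own terms your argument is the standard one and is essentially correct: parts (\ref{prop:2.1})--(\ref{prop:2.3}) follow formally from $BSO(2)\simeq K(\mathbb{Z},2)$ and the fibration $BSO(2)\to BO(2)\to K(\mathbb{Z}/2\mathbb{Z},1)$ (with the usual tacit assumption that $X$ has the homotopy type of a CW complex, which the paper's ``topological space'' glosses over), and in part (\ref{prop:2.4}) the Gysin sequence plus Poincar\'e duality gives the (co)homology computation, reversing the fiber orientation gives diffeomorphic total spaces for $\pm k$, and the torsion/rank of $H_1$ separates distinct values of $|k|$ (note $|k|=0$ and $|k|=1$ both give torsion-free $H_1$ but of different rank, so you do need the rank as well as the torsion subgroup, which your argument implicitly provides).

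The one place where your sketch is doing real work is the final ``$(x,0)$'' claim, and you correctly identify it as the subtle point. Be aware that the assertion as literally stated in the paper (``for \emph{any} section \ldots the class is of the form $(x,0)$'') cannot be true independently of choices: over the trivial bundle $\Sigma\times S^1$ the graph of a degree-one map $C\to S^1$ over a circle $C$ representing $x$ has class $(x,1)$, and in general altering a section over a loop by a degree-$d$ map shifts its homology class by $d$ times the fiber class. The only tenable reading is yours: fix a section over the $1$-skeleton, use it to define the splitting of $p_{\ast}:H_1(M;\mathbb{Z})\to H_1(\Sigma;\mathbb{Z})$, and then section images of $1$-cycles land in the free summand by construction. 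To make that splitting honest you should note that for the one-vertex, one-$2$-cell CW structure on $\Sigma$ the map $H_1(\Sigma^{(1)};\mathbb{Z})\to H_1(\Sigma;\mathbb{Z})$ is an isomorphism (the attaching word is a product of commutators), so $s_{\ast}$ genuinely descends to a splitting, and that $\ker p_{\ast}$ is exactly the $\mathbb{Z}/k\mathbb{Z}$ generated by the fiber (read off from the presentation $\prod[a_i,b_i]=t^{k}$ of $\pi_1(M)$ or from the homology Gysin sequence). With those two sentences added, your argument is complete and is, as far as one can tell, the argument the author intends the reader to import from the literature.
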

 For the theory of bundles whose structure groups consist of linear transformations including $S^1$-bundles and vector bundles and their
 characteristic classes including Stiefel-Whitney classes and Euler classes, see also \cite{milnor stasheff} for example. \\

Let $M$ be a closed manifold of
 dimension $m$ and let $f:M \rightarrow {\mathbb{R}}^n$ {\rm (}$m \geq n \geq 2${\rm )} be a $C^{\infty}$ L-trivial
 round fold map. For any connected component $C$ of $f(S(f))$, we denote by $N(C)$ a
 small closed tubular neighborhood of $C$ such that the
 inverse image has the structures of trivial bundles
 as in Definition \ref{def:2} and by $F_C$ a normal fiber of $C$ corresponding to the bundles $f^{-1}(N(C))$. \\
\ \ \ We now construct a graph corresponding to $f_F:=(f,\{F_C\}_{C \in {\pi}_0 (S(f))})$, where $C \in {\pi}_0 (f(S(f)))$ represents
 the element of the set ${\pi}_0 (f(S(f)))$ of all the connected components of $f(S(f))$. \\
\ \ \ Let $V_{f_F}$ be the set of all the closures of connected components of the regular value set ${\mathbb{R}}^n-f(S(f))$.
 Let $R$ be a commutative ring. We define the set $E^{(H_k,R)}_{f_F} \subset V_{f_F} \times V_{f_F}$ so that the followings hold.
\begin{enumerate}
\item The set $E^{(H_k,R)}_{f_F}$ consists of elements $(P_1,P_2)$ of the set $V_{f_F} \times V_{f_F}$ such that the intersection $P_1 \bigcap P_2$ is not empty. 
\item A pair $(P_1,P_2)$ of closures of connected components of the
 space ${\mathbb{R}}^n-f(S(f))$ such that $P_1 \bigcap P_2$ is not empty and that $P_1$ is in the bounded connected component of ${\mathbb{R}}^n-{\rm Int} P_2$, $(P_1,P_2) \in E_{f_F}$ holds if and only if for
 the connected component $C=P_1 \bigcap P_2$ of $f(S(f))$, the homomorphism
 from $H_k({\partial}_i F_C;R)$ into $H_k(F_C;R)$
 induced by the natural inclusion is surjective.
\item A pair $(P_1,P_2)$ of closures of connected components of the
 space ${\mathbb{R}}^n-f(S(f))$ such that $P_1 \bigcap P_2$ is not empty
 and that $P_2$ is in the bounded connected component of ${\mathbb{R}}^n-{\rm Int} P_1$, $(P_1,P_2) \in E_{f_F}$ holds if and only if for
 the connected component $C:=P_1 \bigcap P_2$ of $f(S(f))$, the homomorphism
 from $H_k({\partial}_o F_C;R)$ into $H_k(F_C;R)$
 induced by the natural inclusion is surjective.
\end{enumerate}

Thus, we obtained a graph $(V_{f_F},E^{(H_k,R)}_{f_F})$ and we call it the {\it $(H_k,R)$-graph} of $f_F=(f,\{F_C\}_{C \in {\pi}_0 (f(S(f)))})$. 
\begin{Def}
\label{def:4}
For the $(H_k,R)$-graph of $f_F=(f,\{F_C\}_{C \in {\pi}_0 (f(S(f)))})$, a non-empty subset $S \subset V_{f_F}$ is said to
 be a {\it starting set} of the graph if for any $x \in V_{f_F}$, there exists a path from a vertex in $S$ to $x$. \\
\ \ \ Let $R_i$ be a commutative ring and let $k_i$ be an integer for $i=1,2$. For the $(H_{k_1},R_1)$-graph and
 the $(H_{k_2},R_2)$-graph of $f_F=(f,\{F_C\}_{C \in {\pi}_0 (f(S(f)))})$, the $(H_{k_1},R_1)$-graph and the $(H_{k_2},R_2)$-graph are start-equivalent on the set $S$ if for any vertex $s \in V_{f_F}$, there exists a vertex $s^{\prime} \in S$ such
 that there exists a path of both of these graphs from $s^{\prime}$ to $s$.
\end{Def}

\ \ \ We show the following theorem. 

\begin{Thm}
\label{thm:1}
Let $M$ be a closed manifold of
 dimension $m$ and $f:M \rightarrow {\mathbb{R}}^n$ {\rm (}$m \geq n \geq 2${\rm )} be a $C^{\infty}$ L-trivial round fold map. For any connected component $C$ of $f(S(f))$, we denote by $N(C)$  a small closed tubular neighborhood of $C$ such that the
 inverse image has the structures of trivial bundles as in Definition \ref{def:2} and by $F_C$ a normal
 fiber of $C$ corresponding to the bundles $f^{-1}(N(C))$ over $C$. Let $F_C$ satisfy
 $H^{2}(F_C;\mathbb{Z}) \cong \{0\}$. Let $S$ be a starting set of the
$(H_1,\mathbb{Z}/2\mathbb{Z})$-graph of $f_F=(f,\{F_C\}_{C \in {\pi}_0 (f(S(f)))})$. Let $F_s$ be the regular fiber of a point in a connected component of ${\mathbb{R}}^n-f(S(f))$ whose
 closure is represented as a vertex $s \in S$ of the graph above and let $i_s:F_s \rightarrow M$ be the natural inclusion. Then, we have the followings.
 
\begin{enumerate}
\item Let $n \geq 4$ hold. Then, by a P-operation to $f$, for any
 closed manifold $M^{\prime}$ having the structure of an $S^1$-bundle over $M$ whose
 1st Stiefel-Whitney class vanishes on ${i_s}_{\ast}(H_1(F_s;\mathbb{Z}/2\mathbb{Z})) \subset H_1(M;\mathbb{Z}/2\mathbb{Z})$ for any $s \in S$, we
 can obtain a round fold map $f^{\prime}:M^{\prime} \rightarrow {\mathbb{R}}^n$. 
\item Let $n=3$ hold. Furthermore, we assume that for any commutative ring $R$, the $(H_1,\mathbb{Z}/2\mathbb{Z})$-graph of $f_F=(f,\{F_C\}_{C \in {\pi}_0 (f(S(f)))})$
 and the $(H_0,R)$-graph of $f_F=(f,\{F_C\}_{C \in {\pi}_0 (f(S(f)))})$ are start-equivalent on the set $S$. 
Then, by a P-operation to $f$, for any closed manifold $M^{\prime}$ having the structure of an $S^1$-bundle over $M$ such that
 the followings hold, we
 can obtain a round fold map $f^{\prime}:M^{\prime} \rightarrow {\mathbb{R}}^n$. 
\begin{enumerate} 
\item The 1st
 Stiefel-Whitney class vanishes on ${i_s}_{\ast}(H_1(F_s;\mathbb{Z}/2\mathbb{Z})) \subset H_1(M;\mathbb{Z}/2\mathbb{Z})$ for any $s \in S$.
\item For $s \in S$, let ${C_s}^{\prime}$ be a 2-dimensional sphere in a closure of a connected component of ${\mathbb{R}}^3-f(S(f))$ represented by $s \in S$ such that for a connected component $C_s$ of $f(S(f))$ and the small closed
 tubular neighborhood $N(C_s)$ defined before, ${C_s}^{\prime}$ is a connected component of $\partial N(C_s)$. The restriction of the bundle
 to the image of a section of the trivial bundle given by $f {\mid}_{f^{-1}({C_s}^{\prime})}:f^{-1}({C_s}^{\prime}) \rightarrow {C_s}^{\prime}$ is trivial. 
\end{enumerate} 
\item  Let $n=2$ hold. Furthermore, we assume that for any commutative ring $R$, the $(H_1,\mathbb{Z}/2\mathbb{Z})$-graph of $f_F=(f,\{F_C\}_{C \in {\pi}_0 (f(S(f)))})$
 and the $(H_0,R)$-graph of $f_F=(f,\{F_C\}_{C \in {\pi}_0 (f(S(f)))})$ are start-equivalent on the set $S$ and that the $(H_1,\mathbb{Z}/2\mathbb{Z})$-graph of $f_F=(f,\{F_C\}_{C \in {\pi}_0 (f(S(f)))})$ and the $(H_1,\mathbb{Z})$-graph of $f_F=(f,\{F_C\}_{C \in {\pi}_0 (f(S(f)))})$
 are start-equivalent on the set $S$. \\
\ \ \ For $s \in S$, let ${C_s}^{\prime}$ be a circle in a closure of a connected component of ${\mathbb{R}}^2-f(S(f))$ represented by $s \in S$ such that for a connected component $C_s$ of $f(S(f))$ and the small closed
 tubular neighborhood $N(C_s)$ defined before, ${C_s}^{\prime}$ is a connected component of $\partial N(C_s)$. The restriction of the bundle
 to the total space of the trivial bundle given by $f {\mid}_{f^{-1}({C_s}^{\prime})}:f^{-1}({C_s}^{\prime}) \rightarrow {C_s}^{\prime}$ is orientable and trivial. \\
\ \ \ Then, by
 a P-operation to $f$, for any closed manifold $M^{\prime}$ having the structure of an $S^1$-bundle over $M$, we
 can obtain a round fold map $f^{\prime}:M^{\prime} \rightarrow {\mathbb{R}}^n$.  
\end{enumerate}
 
\end{Thm}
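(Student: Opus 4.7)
The plan is to reduce Theorem \ref{thm:1} to Proposition \ref{prop:1} with $F=S^1$. That proposition demands the $S^1$-bundle $M'\to M$ restrict to a trivial bundle on each $f^{-1}(N(C))$, so the whole task is to show that the stated cohomological hypotheses force such triviality. By Proposition \ref{prop:2} parts (\ref{prop:2.1})--(\ref{prop:2.3}), an $S^1$-bundle over a space $X$ is determined by its 1st Stiefel-Whitney class in $H^1(X;\mathbb{Z}/2\mathbb{Z})$ and, when orientable, its Euler class in $H^2(X;\mathbb{Z})$; so the point is to prove that both classes restrict to zero on $f^{-1}(N(C))$ for every connected component $C$ of $f(S(f))$. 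Because $f$ is $C^{\infty}$ L-trivial, $f^{-1}(N(C))$ is homotopy equivalent to $S^{n-1}\times F_C$, and Künneth together with the hypothesis $H^2(F_C;\mathbb{Z})\cong \{0\}$ controls how much actually lives there.

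For the 1st Stiefel-Whitney class, the mechanism is propagation along the $(H_1,\mathbb{Z}/2\mathbb{Z})$-graph. At a starting vertex $s\in S$, the hypothesis says the pullback of $w_1$ vanishes on $({i_s})_{\ast}(H_1(F_s;\mathbb{Z}/2\mathbb{Z}))$, i.e.\ $w_1|_{F_s}=0$. A regular fiber $F_s$ coincides with $\partial_i F_C$ or $\partial_o F_C$ for any singular sphere $C$ bordering the region of $s$; the edge condition of the $(H_1,\mathbb{Z}/2\mathbb{Z})$-graph says exactly that the inclusion-induced $H_1(\partial F_C;\mathbb{Z}/2\mathbb{Z})\to H_1(F_C;\mathbb{Z}/2\mathbb{Z})$ is surjective, which (over the field $\mathbb{Z}/2\mathbb{Z}$) dualizes to injectivity of $H^1(F_C;\mathbb{Z}/2\mathbb{Z})\to H^1(\partial F_C;\mathbb{Z}/2\mathbb{Z})$. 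Hence $w_1|_{F_C}=0$, and when $n\ge 3$ we have $H^1(S^{n-1}\times F_C;\mathbb{Z}/2\mathbb{Z})\cong H^1(F_C;\mathbb{Z}/2\mathbb{Z})$, forcing $w_1|_{f^{-1}(N(C))}=0$. Walking the path from $s$ to any other vertex and using the other boundary of each traversed $F_C$ as the fiber at the next vertex, we propagate the vanishing to every normal fiber. For part (1), because $n\ge 4$ gives $H^2(f^{-1}(N(C));\mathbb{Z})\cong H^2(F_C;\mathbb{Z})=0$, the Euler class vanishes automatically, and the reduction to Proposition \ref{prop:1} is complete.

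For $n=3$, $H^2(f^{-1}(N(C));\mathbb{Z})\cong \bigoplus_{\pi_0(F_C)}\mathbb{Z}$, with one copy of $\mathbb{Z}$ per component of $F_C$, each detected by a sphere section $C'\times\{\mathrm{pt}\}$. Hypothesis (2)(b) guarantees that the Euler class vanishes on one such sphere section at each starting vertex $s\in S$. The start-equivalence of the $(H_1,\mathbb{Z}/2\mathbb{Z})$-graph with the $(H_0,R)$-graph then lets me propagate this vanishing along a common path: surjectivity of $H_0(\partial F_C;R)\to H_0(F_C;R)$ along each step of the path means every component of $F_C$ meets a component of $\partial F_C$, so vanishing on a sphere section over one component transports across the boundary into a sphere section over the neighbouring regular-fiber component, and inductively into every $\pi_0$-summand. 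Combined with the $w_1$ propagation above, this yields triviality of the $S^1$-bundle on each $f^{-1}(N(C))$ and again reduces to Proposition \ref{prop:1}. The $n=2$ case follows the same template but now $H^1(S^1\times F_C;\mathbb{Z}/2\mathbb{Z})$ has an extra $H^0(F_C;\mathbb{Z}/2\mathbb{Z})$ contribution (to be handled by the $(H_0,R)$-graph start-equivalence) and $H^2(S^1\times F_C;\mathbb{Z})$ gains an $H^1(S^1)\otimes H^1(F_C;\mathbb{Z})$ piece (handled by the $(H_1,\mathbb{Z})$-graph start-equivalence and the triviality of the $S^1$-bundle over $f^{-1}(C'_s)$ given in the hypothesis), so every summand of the relevant cohomology is killed.

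The main obstacle is the bookkeeping in the low-dimensional cases $n=2,3$, where the $S^{n-1}$ factor contributes extra classes in both $H^1$ and $H^2$ of $f^{-1}(N(C))$. Matching each new class to exactly the right starting datum (a section trivialisation, or vanishing of $w_1$ on $({i_s})_{\ast}(H_1(F_s;\mathbb{Z}/2\mathbb{Z}))$) and propagating it along the appropriate graph is delicate; the role of the start-equivalence hypotheses is precisely that a single path from a single starting vertex simultaneously propagates vanishings for different homology theories, which is what is needed since the various cohomological obstructions must vanish on the same piece $f^{-1}(N(C))$ at once.
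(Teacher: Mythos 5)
Your proposal follows essentially the same route as the paper's own proof: reduce to Proposition \ref{prop:1} by showing the $S^1$-bundle restricts to a trivial bundle on each $f^{-1}(N(C))$, identify the obstructions as the 1st Stiefel--Whitney class and the Euler class via the K\"unneth decomposition of $f^{-1}(N(C))\simeq S^{n-1}\times F_C$ with $H^2(F_C;\mathbb{Z})\cong\{0\}$, and propagate their vanishing from the starting set along the graphs using the injectivity in cohomology dual to the surjectivity conditions on the edges. Your handling of the extra classes contributed by the $S^{n-1}$ factor when $n=2,3$ (via the start-equivalence hypotheses and the section/total-space trivializations at the starting vertices) is likewise exactly the paper's argument.
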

\begin{proof}
In all the cases, it suffices to prove that for any connected component $C \subset f(S(f))$, given $S^1$-bundles over $M$ are trivial
 on $f^{-1}(N(C))$. Let $C_0 \subset f(S(f))$ be a connected component which is in the closures of two connected component of ${\mathbb{R}}^n-f(S(f))$ represented by
 vertices $s_1,s_2 \in V_{f_F}$ of the graph and let $(s_1,s_2) \in E^{(H_1,\mathbb{Z}/2\mathbb{Z})}_{f_F}$. Let the boundary $\partial N(C_0)$ of $N(C_0)$ defined in the assumption of this theorem consist of two connected components $C_1$ and $C_2$ as in Definition \ref{def:2} and
 let $C_j$ be in the closure of a connected component of ${\mathbb{R}}^n-f(S(f))$ represented by $s_j$. In addition, we denote the fiber of the bundle $f^{-1}(C_j)$ over $C_j$ as in Definition \ref{def:2} by ${\partial}_j F_{C_0}$. \\ 
\ \ \ We prove the statement in the first case. The
 homomorphism from $H_1(f^{-1}(C_1);\mathbb{Z}/2\mathbb{Z}) \cong H_1({\partial}_1 F_{C_0};\mathbb{Z}/2\mathbb{Z})$ into
 $H_1(f^{-1}(N(C_0));\mathbb{Z}/2\mathbb{Z}) \cong H_1(F_{C_0};\mathbb{Z}/2\mathbb{Z})$ induced by
 the natural inclusion is surjective, since the homomorphism
 from $H_1({\partial}_1 F_{C_0};\mathbb{Z}/2\mathbb{Z})$ into $H_1(F_{C_0};\mathbb{Z}/2\mathbb{Z})$ induced by the natural inclusion is assumed to be surjective. The kernel of the homomorphism from $H^1(f^{-1}(N(C_0));\mathbb{Z}/2\mathbb{Z})$ into $H^1(f^{-1}(C_1);\mathbb{Z}/2\mathbb{Z})$ induced by the
 natural inclusion is $\{0\}$. $H^2(f^{-1}(N(C_0));\mathbb{Z}) \cong H^2(C_0 \times F_{C_0};\mathbb{Z}) \cong \{0\}$ holds since $H^2(F_{C_0};\mathbb{Z}) \cong \{0\}$ is assumed. Thus, if the
 restriction of an $S^1$-bundle over $M$ to $f^{-1}(C_1)$ is orientable, then the restriction of the original bundle to $f^{-1}(N({C_0}))$ is orientable and it is a trivial bundle. \\
\ \ \ Consider an ($n-1$)-dimensional sphere ${C_s}^{\prime}$ in a closure of a connected component of ${\mathbb{R}}^n-f(S(f))$ represented by $s \in S$ such that for a connected component $C_s$ of $f(S(f))$ and a small closed
 tubular neighborhood $N(C_s)$ as in Definition \ref{def:2}, ${C_s}^{\prime}$ is a connected component of $\partial N(C_s)$.
 $f^{-1}({C_s}^{\prime})$ is diffeomorphic to ${C_s}^{\prime} \times F_s$ and $C_s \times F_s$. $H^1(f^{-1}({C_s}^{\prime});\mathbb{Z}/2\mathbb{Z}) \cong H^1(C_s \times F_s;\mathbb{Z}/2\mathbb{Z}) \cong H^1(F_s;\mathbb{Z}/2\mathbb{Z})$ holds
 and if the 1st Stiefel-Whitney class of an $S^1$-bundle over $M$ vanishes on $F_s$, then the restriction
 of the $S^1$-bundle over $M$ to $f^{-1}({C_s}^{\prime})$ is trivial. Thus, by the discussion in the previous paragraph together with an induction based on the fact that $S$ is
 a starting set of the graph, for any
 connected component $C$ of $f(S(f))$, the restriction
 of any $S^1$-bundle over $M$ whose
 1st Stiefel-Whitney class vanishes on $i_{\ast}(H_1(F_s;\mathbb{Z}/2\mathbb{Z})) \subset H_1(M;\mathbb{Z}/2\mathbb{Z})$ ($s \in S$) to $f^{-1}(N(C))$ is
 orientable and it is a trivial bundle. This completes the proof of the first case. \\
\ \ \ We prove the statement in the second case. As in the first case, the
 homomorphism from $H_1(f^{-1}(C_1);\mathbb{Z}/2\mathbb{Z}) \cong H_1({\partial}_1 F_{C_0};\mathbb{Z}/2\mathbb{Z})$ into
 $H_1(f^{-1}(N(C_0));\mathbb{Z}/2\mathbb{Z}) \cong H_1(F_{C_0};\mathbb{Z}/2\mathbb{Z})$ induced by
 the natural inclusion is surjective, since the homomorphism
 from $H_1({\partial}_1 F_{C_0};\mathbb{Z}/2\mathbb{Z})$ into $H_1(F_{C_0};\mathbb{Z}/2\mathbb{Z})$ induced by the natural inclusion is assumed to be surjective. The kernel of the homomorphism from $H^1(f^{-1}(N(C_0));\mathbb{Z}/2\mathbb{Z})$ into $H^1(f^{-1}(C_1);\mathbb{Z}/2\mathbb{Z})$ induced by the
 natural inclusion is $\{0\}$. We have $H^2(f^{-1}(N({C_0}));\mathbb{Z}) \cong H^2(C_0 \times F_{C_0};\mathbb{Z}) \cong H^2(C_0;\mathbb{Z}) \otimes H^0(F_{C_0};\mathbb{Z}) \cong \mathbb{Z} \otimes H^0(F_{C_0};\mathbb{Z})$ since $H^2(F_{C_0};\mathbb{Z}) \cong \{0\}$. We
 also have $H^2(f^{-1}(C_1);\mathbb{Z}) \cong H^2(C_0 \times {\partial}_1 F_{C_0};\mathbb{Z}) \cong (H^2(C_0;\mathbb{Z}) \otimes H^0({\partial}_1 F_{C_0};\mathbb{Z})) \oplus (H^0(C_0;\mathbb{Z}) \otimes H^2({\partial}_1 F_{C_0};\mathbb{Z})) \cong (\mathbb{Z} \otimes H^0({\partial}_1 F_{C_0};\mathbb{Z})) \oplus (H^0(C_0;\mathbb{Z}) \otimes H^2({\partial}_1 F_{C_0};\mathbb{Z}))$ since $H^2(F_{C_0};\mathbb{Z}) \cong \{0\}$. Moreover, we have $H_2(f^{-1}(C_1);\mathbb{Z}) \cong H_2({\partial}_1 F_{C_0};\mathbb{Z}) \oplus (H_0({\partial}_1 F_{C_0};\mathbb{Z}) \otimes \mathbb{Z})$
 and $H_2(f^{-1}(N(C_0));\mathbb{Z}) \cong H_2(C_0 \times F_{C_0};\mathbb{Z}) \cong \mathbb{Z} \otimes H_0(F_{C_0};\mathbb{Z})$. The
 homomorphism from $H_0({\partial}_1 F_{C_0};\mathbb{Z})$ into $H_0(F_{C_0};\mathbb{Z})$ induced by the natural inclusion is
 assumed to be surjective. Thus, by the discussion here, the kernel of the homomorphism from $H^2(f^{-1}(N(C_0));\mathbb{Z})$
 into $H^2(f^{-1}(C_1);\mathbb{Z})$ induced by the
 natural inclusion is $\{0\}$. Thus, if the
 restriction of a bundle over $M$ to $f^{-1}(C_1)$ is trivial, then the restriction of the original bundle to $f^{-1}(N(C_0))$ is orientable by a discussion similar to that of the proof of the first case and it is a trivial bundle. \\ 
\ \ \ Consider a $2$-dimensional sphere ${C_s}^{\prime}$ in a closure of a connected component of ${\mathbb{R}}^n-f(S(f))$
 represented by $s \in S$ such that for a connected component $C_s$ of $f(S(f))$ and a small closed
 tubular neighborhood $N(C_s)$ defined in the assumption of this theorem, ${C_s}^{\prime}$ is a connected component of $\partial N(C_s)$.
 $f^{-1}({C_s}^{\prime})$ is diffeomorphic to ${C_s}^{\prime} \times F_s$ and $C_s \times F_s$. $H^1(f^{-1}({C_s}^{\prime});\mathbb{Z}/2\mathbb{Z}) \cong H^1(C_s \times F_s;\mathbb{Z}/2\mathbb{Z}) \cong H^1(F_s;\mathbb{Z}/2\mathbb{Z})$
 holds. $H^2(f^{-1}(N(C));\mathbb{Z}) \cong H^2(C \times F_C;\mathbb{Z}) \cong H^2(C;\mathbb{Z}) \otimes H^0(F_C;\mathbb{Z}) \cong \mathbb{Z} \otimes H^0(F_C;\mathbb{Z})$ and
 $H^2(f^{-1}({C_s}^{\prime});\mathbb{Z}) \cong H^2(C_s \times F_s;\mathbb{Z}) \cong (H^2(C_s;\mathbb{Z}) \otimes H^0(F_s;\mathbb{Z})) \oplus (\mathbb{Z} \otimes H^2(F_s;\mathbb{Z}))$ also hold.
 Thus, by the structures of these cohomology groups, if the 1st Stiefel-Whitney class of an $S^1$-bundle over $M$ vanishes on $F_s$ and the restriction of the bundle
 to the image of a section of the trivial bundle given by $f {\mid}_{f^{-1}({C_s}^{\prime})}:f^{-1}({C_s}^{\prime}) \rightarrow {C_s}^{\prime}$ is trivial, then the restriction
 of the $S^1$-bundle over $M$ to $f^{-1}({C_s}^{\prime})$ is trivial. Thus, by the discussion in the previous paragraph together with an induction based on the assumptions that $S$ is
 a starting set of the $(H_1,\mathbb{Z}/2\mathbb{}Z)$-graph of $f_F=(f,\{F_C\}_{C \in {\pi}_0 (f(S(f)))})$ and that the graph and the
$(H_0,R)$-graph of $f_F=(f,\{F_C\}_{C \in {\pi}_0 (f(S(f)))})$ are start-equivalent on the set $S$, for any
 connected component $C$ of $f(S(f))$, if we restrict any $S^1$-bundle over $M$ whose
 1st Stiefel-Whitney class vanishes on $i_{\ast}(H_1(F_s;\mathbb{Z}/2\mathbb{Z})) \subset H_1(M;\mathbb{Z}/2\mathbb{Z})$ ($s \in S$) and whose restriction of the bundle
 to the image of a section of the trivial bundle given by $f {\mid}_{f^{-1}({C_s}^{\prime})}:f^{-1}({C_s}^{\prime}) \rightarrow {C_s}^{\prime}$ is trivial to $f^{-1}(N(C))$, then the resulting bundle is
 orientable and trivial. This completes the proof of the second case. \\ 
\ \ \ We prove the statement in the last case. The homomorphism
 from $H_1(f^{-1}(C_1);\mathbb{Z}/2\mathbb{Z}) \cong H_1({\partial}_1 F_{C_0};\mathbb{Z}/2\mathbb{Z}) \oplus (H_1(C_0;\mathbb{Z}/2\mathbb{Z}) \otimes H_0({\partial}_1 F_{C_0};\mathbb{Z}/2\mathbb{Z}))$ into
 $H_1(f^{-1}(N(C_0));\mathbb{Z}/2\mathbb{Z}) \cong H_1(F_{C_0};\mathbb{Z}/2\mathbb{Z}) \oplus (H_1(C_0;\mathbb{Z}/2\mathbb{Z}) \otimes H_0(F_{C_0};\mathbb{Z}/2\mathbb{Z}))$ induced by
 the natural inclusion is surjective, since the homomorphism
 from $H_k({\partial}_1 F_{C_0};\mathbb{Z}/2\mathbb{Z})$ into $H_k(F_{C_0};\mathbb{Z}/2\mathbb{Z})$ induced by
 the natural inclusion is assumed to be surjective for $k=0,1$. It follows that the kernel of the homomorphism from $H^1(f^{-1}(N(C_0));\mathbb{Z}/2\mathbb{Z})$ into $H^1(f^{-1}(C_1);\mathbb{Z}/2\mathbb{Z})$ induced
 by the natural inclusion is $\{0\}$. \\
\ \ \ The homomorphism
 from $H_k({\partial}_1 F_{C_0};\mathbb{Z})$ into $H_k(F_{C_0};\mathbb{Z})$ and the homomorphism from $H_k({\partial}_1 F_{C_0};\mathbb{Z})$ into $H_k(F_{C_0};\mathbb{Z})$ induced by the natural inclusions
 are assumed to be surjective for $k=0,1$. Thus, $H_2(f^{-1}(N(C_0));\mathbb{Z}) \cong (H_1(C_0;\mathbb{Z}) \otimes H_1(F_{C_0};\mathbb{Z})) \oplus H_2(F_{C_0};\mathbb{Z})$ is regarded also as the direct sum of the following two groups.
\begin{enumerate}
\item The image of the restriction of the homomorphism from $H_2(f^{-1}(C_1);\mathbb{Z}) \cong (H_1(C_0;\mathbb{Z}) \otimes H_1({\partial}_1 F_{C_0};\mathbb{Z})) \oplus H_2({\partial}_1 F_{C_0};\mathbb{Z})$ into $H_2(f^{-1}(N(C_0));\mathbb{Z})$ induced by
 the natural inclusion to the part $H_1(C_0;\mathbb{Z}) \otimes H_1({\partial}_1 F_{C_0};\mathbb{Z})$ .
\item $H_2(F_{C_0};\mathbb{Z})$.
\end{enumerate}
 Hence, the kernel of the homomorphism from $H^2(f^{-1}(N(C_0));\mathbb{Z}) \cong (H^1(C_0;\mathbb{Z}) \otimes H^1(F_{C_0};\mathbb{Z})) \oplus H^2(F_{C_0};\mathbb{Z})$
 into $H^2(f^{-1}(C_1);\mathbb{Z}) \cong (H^1(C_0;\mathbb{Z}) \otimes H^1({\partial}_1 F_{C_0};\mathbb{Z})) \oplus H^2({\partial}_1 F_{C_0};\mathbb{Z})$ induced by
 the natural inclusion is $\{0\}$. \\
\ \ \ By the arguments before, if the restriction of
 an $S^1$-bundle over $M$ to $f^{-1}(C_1)$ is orientable and the Euler class of the (oriented) bundle obtained by the restriction
 vanishes, then the restriction of
 the $S^1$-bundle over $M$ to $f^{-1}(N(C_0))$ is orientable and the Euler class of
 the (oriented) bundle obtained by the restriction vanishes. \\
\ \ \ Consider a circle ${C_s}^{\prime}$ in a closure of a connected component of ${\mathbb{R}}^2-f(S(f))$
 represented by $s \in S$ such that for a connected component $C_s$ of $f(S(f))$ and a small closed
 tubular neighborhood $N(C_s)$ as in Definition \ref{def:2}, ${C_s}^{\prime}$ is a connected component of $\partial N(C)$.
 $f^{-1}({C_s}^{\prime})$ is diffeomorphic to ${C_s}^{\prime} \times F_s$ and $C_s \times F_s$. We
 have
 $H^1(f^{-1}({C_s}^{\prime});\mathbb{Z}/2\mathbb{Z}) \cong H^1(C_s \times F_s;\mathbb{Z}/2\mathbb{Z}) \cong \mathbb{Z}/2\mathbb{Z} \otimes H^1(F_s;\mathbb{Z}/2\mathbb{Z})$, $H^2(f^{-1}(N(C));\mathbb{Z}) \cong H^2(C \times F_C;\mathbb{Z}) \cong H^1(C;\mathbb{Z}) \otimes H^1(F_C;\mathbb{Z}) \cong \mathbb{Z} \otimes H^1(F_C;\mathbb{Z})$ and
 $H^2(f^{-1}({C_s}^{\prime});\mathbb{Z}) \cong H^2(C_s \times F_s;\mathbb{Z}) \cong (H^1(C_s;\mathbb{Z}) \otimes H^1(F_s;\mathbb{Z})) \oplus (H^0(C_s;\mathbb{Z}) \otimes H^2(F_s;\mathbb{Z})) \cong (\mathbb{Z} \otimes H^1(F_s;\mathbb{Z})) \oplus (\mathbb{Z} \otimes H^2(F_s;\mathbb{Z}))$. $S$ is
 a starting set of the $(H_1,\mathbb{Z}/2\mathbb{Z})$-graph of $f_F=(f,\{F_C\}_{C \in {\pi}_0 (f(S(f)))})$. This graph and the $(H_0,R)$-graph of $f_F=(f,\{F_C\}_{C \in {\pi}_0 (f(S(f)))})$ are start-equivalent on the set $S$. This graph and the
$(H_1,\mathbb{Z})$-graph of $f_F=(f,\{F_C\}_{C \in {\pi}_0 (f(S(f)))})$ are also start-equivalent on the set $S$. Thus, by the discussion in the previous paragraph, for any
 connected component $C$ of $f(S(f))$, if we restrict any $S^1$-bundle over $M$ whose restriction of the bundle
 to the total space of the trivial bundle given by $f {\mid}_{f^{-1}({C_s}^{\prime})}:f^{-1}({C_s}^{\prime}) \rightarrow {C_s}^{\prime}$ is trivial to $f^{-1}(N(C))$, then the resulting bundle is
 orientable and trivial. This completes the proof of the last case. \\ 
\ \ \ This completes the proof of all the statements. 
\end{proof}

  We show Theorem \ref{thm:1} in the case where the manifold $M$ is connected and $n=2,3$

\begin{Thm}
\label{thm:2}
Let $M$ be a closed and connected manifold of
 dimension $m$ and $f:M \rightarrow {\mathbb{R}}^n$ {\rm (}$m \geq n \geq 2${\rm )} be a $C^{\infty}$ L-trivial round fold map. For any connected component $C$ of $f(S(f))$, we denote by $N(C)$ a small closed tubular neighborhood of $C$ such that the
 inverse image has the structures of trivial bundles as in Definition \ref{def:2} and by $F_C$ a normal
 fiber of $C$ corresponding to the bundles $f^{-1}(N(C))$ over $C$. Let $F_C$ satisfy
 $H^{2}(F_C;\mathbb{Z}) \cong \{0\}$. Let $S$ be a starting set of the
$(H_1,\mathbb{Z}/2\mathbb{Z})$-graph of $f_F=(f,\{F_C\}_{C \in {\pi}_0 (f(S(f)))})$. Let $F_s$ be the regular fiber of a point in a connected component of ${\mathbb{R}}^n-f(S(f))$ whose
 closure is represented as a vertex $s \in S$ of the graph above and let $i_s:F_s \rightarrow M$ be the natural inclusion. Then, we have the followings. 
\begin{enumerate}
\item Let $n=3$ hold. Then, by a P-operation to $f$, for any closed manifold $M^{\prime}$ having the structure of an $S^1$-bundle over $M$ such that
 the followings hold, we
 can obtain a round fold map $f^{\prime}:M^{\prime} \rightarrow {\mathbb{R}}^3$. 
\begin{enumerate} 
\item The 1st
 Stiefel-Whitney class vanishes on ${i_s}_{\ast}(H_1(F_s;\mathbb{Z}/2\mathbb{Z})) \subset H_1(M;\mathbb{Z}/2\mathbb{Z})$ for any $s \in S$.
\item For a $2$-dimensional sphere ${C_0}^{\prime}$ in a connected component of $({\mathbb{R}}^3-f(S(f))) \bigcap f(M)$ such that for a
 connected component $C_0$ of $f(S(f))$ and the small closed
 tubular neighborhood $N(C_0)$ defined before, ${C_0}^{\prime}$ is a connected component of $\partial N(C_0)$, the restriction of the bundle
 to the image of a section of the trivial bundle given by $f {\mid}_{f^{-1}({C_0}^{\prime})}:f^{-1}({C_0}^{\prime}) \rightarrow {C_0}^{\prime}$ is trivial. 
\end{enumerate} 
\item Let $n=2$ hold. Furthermore, we assume that the $(H_1,\mathbb{Z}/2\mathbb{Z})$-graph of $f_F=(f,\{F_C\}_{C \in {\pi}_0 (f(S(f)))})$ and the $(H_1,\mathbb{Z})$-graph of $f_F=(f,\{F_C\}_{C \in {\pi}_0 (f(S(f)))})$ are start-equivalent on the set $S$. \\ 
\ \ \ For $s \in S$, let ${C_s}^{\prime}$ be a circle in a closure of a connected component of ${\mathbb{R}}^2-f(S(f))$ represented by $s \in S$ such that for a connected component $C_s$ of $f(S(f))$ and the small closed
 tubular neighborhood $N(C_s)$ defined before, ${C_s}^{\prime}$ is a connected component of $\partial N(C_s)$. 
Let $M^{\prime}$ be a manifold having the structure of an oriented $S^1$-bundle over $M$ such that
 the restriction of the bundle
 to the total space of the trivial bundle given by $f {\mid}_{f^{-1}({C_s}^{\prime})}:f^{-1}({C_s}^{\prime}) \rightarrow {C_s}^{\prime}$ is orientable and trivial. \\
\ \ \ Then, by a P-operation to $f$, for
 any closed manifold $M^{\prime}$ having the structure of an $S^1$-bundle over $M$, we
 can obtain a round fold map $f^{\prime}:M^{\prime} \rightarrow {\mathbb{R}}^2$.  
\end{enumerate}
\end{Thm}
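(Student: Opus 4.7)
The plan is to imitate the proof of Theorem 1, using the connectedness of $M$ to both weaken the section-triviality hypothesis (in the $n=3$ case) and to remove the $(H_0,R)$-graph start-equivalence hypothesis. As in Theorem 1, it suffices to show that for every connected component $C$ of $f(S(f))$, the restriction of the given $S^1$-bundle over $M$ to $f^{-1}(N(C))$ is trivial, and then apply Proposition \ref{prop:1}.

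First I would reuse the local cohomological inductive step from the proof of Theorem \ref{thm:1} essentially verbatim: if $C_0 \subset f(S(f))$ has $\partial N(C_0) = C_1 \sqcup C_2$, then triviality of the bundle on $f^{-1}(C_1)$ together with the $H_1$-surjectivities (and, in the $n=2$ case, also the $H_1$-with-$\mathbb{Z}$-coefficients surjectivities) arising from the graph conditions implies triviality on $f^{-1}(N(C_0))$. The starting-set property of $S$ in the $(H_1,\mathbb{Z}/2\mathbb{Z})$-graph then reduces the problem to verifying triviality on each $f^{-1}(C_s')$ for $s \in S$.

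The novel content, which exploits connectedness of $M$, is the propagation argument that replaces the $(H_0,R)$-graph hypothesis and, for $n=3$, lets a single section triviality suffice. For $n=3$, I would use that the $S^1$-bundle is classified by a class in $H^2(M;\mathbb{Z})$ whose restriction to $f^{-1}(C_s') \cong C_s' \times F_s$ decomposes via K\"unneth into a 1st Stiefel--Whitney piece (controlled by the hypothesis on ${i_s}_{\ast}(H_1(F_s;\mathbb{Z}/2\mathbb{Z}))$) and an Euler-class pairing with the section 2-sphere. Because $M$ is connected, the 2-spheres arising as sections over different $C_s'$ can be related through 3-chains in $M$; hence vanishing of the Euler-class pairing with the distinguished $C_0'$ forces vanishing on all the other section 2-spheres. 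For $n=2$, I would run the analogous argument using the $(H_1,\mathbb{Z})$-graph start-equivalence together with oriented Euler-class considerations, with $M$-connectedness bypassing the need for $(H_0,R)$-start-equivalence because the relevant $H_0$ information is globally determined.

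The main obstacle will be making the propagation argument in the previous paragraph precise. Concretely, I must identify the correct 2-cycles (and, for $n=2$, 1-cycles) in $M$ whose pairings with the classifying class of the bundle must vanish, and check that connectedness of $M$ together with the starting-set structure on $S$ is genuinely enough to reduce all of these pairings to the single hypothesized one on $C_0'$. If this fails in some corner case, one may need to strengthen the propagation with an explicit Mayer--Vietoris computation gluing together the pieces $f^{-1}(N(C))$ along their boundaries to realize the relevant classes in $H^2(M;\mathbb{Z})$ and to reduce them to the section data.
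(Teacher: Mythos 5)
Your proposal follows essentially the same route as the paper's proof: reduce to triviality of the $S^1$-bundle on each $f^{-1}(N(C))$, reuse the K\"unneth/inclusion-induced-homomorphism computations and the orientability step from the proof of Theorem \ref{thm:1}, and then use connectedness of $M$ to propagate the single section-triviality (resp.\ orientability-and-triviality over ${C_s}^{\prime}$) hypothesis to all components, which is exactly how the paper dispenses with the $(H_0,R)$-graph start-equivalence. The ``main obstacle'' you flag --- making the propagation of the Euler-class vanishing across components precise --- is treated just as tersely in the paper itself (``it immediately follows''), so your plan is faithful to, and no less complete than, the published argument.
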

\begin{proof}
We prove the first part. For any connected component $C$ of $f(S(f))$ and the small tubular neighborhood $N(C)$ of $C$, we
 have $H^2(f^{-1}(N(C));\mathbb{Z}) \cong H^2(F_C;\mathbb{Z}) \oplus (H^0(F_C;\mathbb{Z}) \otimes \mathbb{Z}) \cong H^0(F_C;\mathbb{Z}) \otimes \mathbb{Z}$. By an
 argument similar to that of Theorem \ref{thm:1}, the restriction of
 any $S^1$-bundle over $M$ such that the 1st
 Stiefel-Whitney class vanishes on ${i_s}_{\ast}(H_1(F_s;\mathbb{Z}/2\mathbb{Z})) \subset H_1(M;\mathbb{Z}/2\mathbb{Z})$ for any $s \in S$ to $f^{-1}(N(C))$ is orientable. Thus, by a discussion similar to that of the proof of Theorem \ref{thm:1}, we have only to show that
 for any connected component $C$ of $f(S(f))$ and each connected component $C^{\prime}$ of the boundary $\partial N(C)$ of $N(C)$, the restriction of the bundle
 over $M$ satisfying the conditions mentioned in the assumption of this theorem to $f^{-1}(C^{\prime})$ is trivial. Since the bundle $f^{-1}(N(C))$ over $C$ as in Definition \ref{def:2} is
 trivial and represented by $C \times F_C$, the restriction of the bundle over $M$ to $f^{-1}({C_0}^{\prime})$ is trivial by the second condition and $M$ is connected, it immediately follows. \\
\ \ \ We prove the second part. Let $C$ be a connected component of $f(S(f))$. For the small tubular neighborhood $N(C)$ of $C$, let $C^{\prime}$ be a connected component of the boundary of the tubular neighborhood $N(C)$ and let ${F_C}^{\prime}$ be
 a fiber of the bundle given by $f {\mid}_{f^{-1}(C^{\prime})}:f^{-1}(C^{\prime}) \rightarrow C^{\prime}$. We
 have $H^1(f^{-1}(C^{\prime});\mathbb{Z}/2\mathbb{Z}) \cong H^1({C}^{\prime} \times {F_C}^{\prime};\mathbb{Z}/2\mathbb{Z}) \cong \mathbb{Z}/2\mathbb{Z} \otimes H^1({F_C}^{\prime};\mathbb{Z}/2\mathbb{Z})$ and $H^1(f^{-1}(N(C));\mathbb{Z}/2\mathbb{Z}) \cong (H^1(C;\mathbb{Z}/2\mathbb{Z}) \otimes H^0(F_C;\mathbb{Z}/2\mathbb{Z})) \oplus ((H^0(C;\mathbb{Z}/2\mathbb{Z}) \otimes H^1(F_C;\mathbb{Z}/2\mathbb{Z})) \cong (\mathbb{Z}/2\mathbb{Z} \otimes H^0(F_C;\mathbb{Z}/2\mathbb{Z})) \oplus (\mathbb{Z}/2\mathbb{Z} \otimes H^1(F_C;\mathbb{Z}/2\mathbb{Z}))$. We
 also have $H^2(f^{-1}({C}^{\prime});\mathbb{Z}) \cong H^2({C}^{\prime} \times {F_C}^{\prime};\mathbb{Z}) \cong (H^1({C}^{\prime};\mathbb{Z}) \otimes H^1({F_C}^{\prime};\mathbb{Z})) \oplus (H^0({C}^{\prime};\mathbb{Z}) \otimes H^2({F_C}^{\prime};\mathbb{Z})) \cong (\mathbb{Z} \otimes H^1({F_C}^{\prime};\mathbb{Z})) \oplus (\mathbb{Z} \otimes H^2({F_C}^{\prime};\mathbb{Z}))$
 and $H^2(f^{-1}(N(C));\mathbb{Z}) \cong H^2(C \times F_C;\mathbb{Z}) \cong H^1(C;\mathbb{Z}) \otimes H^1(F_C;\mathbb{Z}) \cong \mathbb{Z} \otimes H^1(F_C;\mathbb{Z})$.
For any connected component $C$ of $f(S(f))$, the restriction of the bundle
 over $M$ satisfying the conditions mentioned in the assumption of this theorem to a fiber $F_C$ of the bundle $f^{-1}(N(C))$ over $C$ is orientable by the
 structures of the groups $H^1(f^{-1}(N(C));\mathbb{Z}/2\mathbb{Z})$ and $H^1(f^{-1}({C}^{\prime});\mathbb{Z}/2\mathbb{Z})$ together with the assumption that the $(H_1,\mathbb{Z}/2\mathbb{Z})$-graph of $f_F=(f,\{F_C\}_{C \in {\pi}_0 (f(S(f)))})$
 has a starting set $S$. Furthermore, the restriction of the bundle
 over $M$ satisfying the conditions mentioned in the assumption of this theorem to $f^{-1}(N(C))$ is orientable by the assumptions that the restriction of the bundle
 to the total space of the trivial bundle given by $f {\mid}_{f^{-1}({C_s}^{\prime})}:f^{-1}({C_s}^{\prime}) \rightarrow {C_s}^{\prime}$ is orientable and that the manifold $M$ is connected. Thus, by a discussion similar to that of the proof of Theorem \ref{thm:1}, for any connected component $C$ of $f(S(f))$, the restriction of the bundle to $f^{-1}(N(C))$ is trivial. This completes the proof of the desired fact.
\end{proof}

We immediately have the following corollary to Theorem \ref{thm:2}.

\begin{Cor}
\label{cor:1}
Let $M$ be a closed and connected manifold of
 dimension $m \geq 3$ and $f:M \rightarrow {\mathbb{R}}^3$ be a $C^{\infty}$ L-trivial round fold map such
 that $f(M)$ is diffeomorphic to $D^3$. For any connected component $C$ of $f(S(f))$, we denote by $N(C)$  a small closed tubular neighborhood of $C$ such that the
 inverse image has the structures of trivial bundles as in Definition \ref{def:2} and by $F_C$ the normal
 fiber of $C$ corresponding to the bundles $f^{-1}(N(C))$ over $C$. Let $F_C$ satisfy
 $H^{2}(F_C;\mathbb{Z}) \cong \{0\}$. Let $S$ be a starting set of the
$(H_1,\mathbb{Z}/2\mathbb{Z})$-graph of $f_F=(f,\{F_C\}_{C \in {\pi}_0 (f(S(f)))})$. Let $F_s$ be the regular fiber of a point in a connected component of ${\mathbb{R}}^n-f(S(f))$ whose
 closure is represented as a vertex $s \in S$ of the graph above and let $i_s:F_s \rightarrow M$ be the natural inclusion. \\
\ \ \ Then, by a P-operation to $f$, for any closed manifold $M^{\prime}$ having the structure of an $S^1$-bundle over $M$ such
 that the 1st
 Stiefel-Whitney class vanishes on ${i_s}_{\ast}(H_1(F_s;\mathbb{Z}/2\mathbb{Z})) \subset H_1(M;\mathbb{Z}/2\mathbb{Z})$ for any $s \in S$, we
 can obtain a round fold map $f^{\prime}:M^{\prime} \rightarrow {\mathbb{R}}^3$. 
\end{Cor}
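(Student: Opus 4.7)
The plan is to deduce Corollary \ref{cor:1} directly from the $n=3$ case of Theorem \ref{thm:2} by showing that the hypothesis $f(M) \cong D^3$ automatically implies the one extra condition of Theorem \ref{thm:2} absent from the statement of the corollary, namely the triviality of the $S^1$-bundle when restricted to the image of some section of $f^{-1}({C_0}^{\prime}) \to {C_0}^{\prime}$. All other hypotheses (the vanishing $H^2(F_C;\mathbb{Z}) \cong \{0\}$, the starting set $S$, and the Stiefel-Whitney vanishing on each ${i_s}_{\ast}(H_1(F_s;\mathbb{Z}/2\mathbb{Z}))$) appear verbatim in Corollary \ref{cor:1}, so the entire task reduces to verifying this section-triviality condition.

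The key geometric observation is the following. Place $f$ in normal form, let $C_0 \subset f(S(f))$ be the innermost connected component (identified with $\partial {D^3}_1$), and choose ${C_0}^{\prime} = \partial {D^3}_{1-\epsilon}$ to be the inner connected component of $\partial N(C_0)$ for small $\epsilon > 0$. The disk ${D^3}_{1-\epsilon}$ contains no singular values of $f$ and, because $f(M)$ is a 3-disk containing the innermost singular value sphere, we have ${D^3}_{1-\epsilon} \subset f(M)$. The restriction of $f$ to $f^{-1}({D^3}_{1-\epsilon})$ is thus a proper submersion over a contractible base, and Ehresmann's fibration theorem yields a diffeomorphism $f^{-1}({D^3}_{1-\epsilon}) \cong D^3 \times F$, where $F$ denotes the regular fiber in the proper core. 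Restricting to the boundary produces a compatible trivialization $f^{-1}({C_0}^{\prime}) \cong S^2 \times F$.

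In this trivialization, take the constant section $s \colon {C_0}^{\prime} \to f^{-1}({C_0}^{\prime})$ given by $s(x) = (x, p_0)$ for any fixed $p_0 \in F$. Its image $\Sigma = S^2 \times \{p_0\}$ is the boundary of the embedded 3-disk $D^3 \times \{p_0\}$ inside $f^{-1}({D^3}_{1-\epsilon}) \subset M$. Consequently, the restriction of any $S^1$-bundle over $M$ to $\Sigma$ extends across this 3-disk, and since every $S^1$-bundle over $D^3$ is trivial, the restriction to $\partial D^3 = \Sigma$ must itself be trivial. This verifies the section-triviality required by Theorem \ref{thm:2}, and applying that theorem to $f$ and the given $S^1$-bundle $M^{\prime} \to M$ produces the desired round fold map $f^{\prime} \colon M^{\prime} \to {\mathbb{R}}^3$. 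The one point requiring explicit verification is the inclusion ${D^3}_{1-\epsilon} \subset f(M)$, which follows from the normal form together with the assumption $f(M) \cong D^3$; I expect this to be the only step that needs care.
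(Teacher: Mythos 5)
Your proposal is correct and follows the paper's (implicit) route: the paper offers no written proof, merely asserting that the corollary is immediate from Theorem \ref{thm:2}, and the content you supply --- that $f(M)\cong D^3$ forces condition (b) of the $n=3$ case because the image of a constant section over the innermost sphere $\partial {D^3}_{1-\epsilon}$ bounds an embedded $3$-disc $D^3\times\{p_0\}$ in $f^{-1}({D^3}_{1-\epsilon})\cong D^3\times F$, over which every $S^1$-bundle is trivial --- is exactly the verification the author leaves to the reader. The one point you flag, namely ${D^3}_{1-\epsilon}\subset f(M)$, does hold: in normal form $f(M)$ is a union of the singular value spheres and closures of components of the regular value set, and omitting the innermost ball would make $f(M)$ homeomorphic to $S^2\times[0,1]$ rather than $D^3$.
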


We review the following propositions, which have been shown in \cite{kitazawa 5} and which are regarded as specific cases
 of Theorems \ref{thm:1} and \ref{thm:2}.

\begin{Prop}
\label{prop:3}
Let $M$ be a closed manifold of dimension $m$, $f:M \rightarrow {\mathbb{R}}^n$ be a $C^{\infty}$ L-trivial round fold map and $m \geq n \geq 3$. Assume that either of the followings holds.
\begin{enumerate}
\item $n \geq 4$ holds.
\item $n=3$ holds, $f(M)$ is diffeomorphic to $D^3$ and $M$ is connected.
\end{enumerate}
 Assume that for any connected component $C$ of $f(S(f))$, a normal fiber $F_C$ of $C$ corresponding to a bundle satisfies $H^{2}(F_C;\mathbb{Z}) \cong \{0\}$. Then
 for any closed manifold having the structure of an orientable $S^1$-bundle over $M$, by a P-operation by $S^1$ on $f$, we
 can obtain a round fold map $f^{\prime}:M^{\prime} \rightarrow {\mathbb{R}}^n$. \\ 
\ \ \ Furthermore, for any connected component $C$ of $f(S(f))$, let the
 normal fiber $F_C$ of $C$ satisfy $H^{1}(F_C;{\mathbb{Z}}_2) \cong \{0\}$. Then for any closed manifold having the structure of an $S^1$-bundle over $M$, by a P-operation by $S^1$ on $f$, we
 can obtain a $C^{\infty}$ L-trivial round fold map $f^{\prime}:M^{\prime} \rightarrow {\mathbb{R}}^n$.
\end{Prop}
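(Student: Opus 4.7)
The plan is to derive Proposition \ref{prop:3} as a direct consequence of Theorem \ref{thm:1}, Corollary \ref{cor:1} and Proposition \ref{prop:2}, by checking that the stronger hypotheses here force the hypotheses of those results to hold. In every case I take $S := V_{f_F}$ as the starting set, which is trivially a starting set of any of the graphs considered, since each vertex is reachable from itself by the empty path.

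For the first statement, orientability of the $S^1$-bundle $M' \to M$ means, by Proposition \ref{prop:2} (\ref{prop:2.1}), that its 1st Stiefel-Whitney class vanishes identically in $H^1(M;\mathbb{Z}/2\mathbb{Z})$, so in particular it vanishes on $(i_s)_{\ast}(H_1(F_s;\mathbb{Z}/2\mathbb{Z}))$ for every $s \in S$. When $n \geq 4$ this is exactly the hypothesis of Theorem \ref{thm:1} (1), which then applies directly; when $n = 3$ together with $f(M) \cong D^3$ and $M$ connected, Corollary \ref{cor:1} applies instead. In either subcase the resulting P-operation by $S^1$ produces the desired round fold map $f' : M' \to \mathbb{R}^n$.

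For the ``furthermore'' statement, the extra hypothesis $H^1(F_C;\mathbb{Z}/2\mathbb{Z}) \cong \{0\}$ is designed to make orientability of the $S^1$-bundle unnecessary. The core computation is the K\"unneth formula applied to $f^{-1}(N(C)) \cong C \times F_C \cong S^{n-1} \times F_C$: since $n \geq 3$ forces $H^1(S^{n-1};\mathbb{Z}/2\mathbb{Z}) = 0$, combined with the new hypothesis this gives $H^1(f^{-1}(N(C));\mathbb{Z}/2\mathbb{Z}) = 0$, so the restriction of any $S^1$-bundle on $M$ to $f^{-1}(N(C))$ is automatically orientable. When $n \geq 4$, the analogous K\"unneth computation using $H^2(S^{n-1};\mathbb{Z}) = 0$ and $H^2(F_C;\mathbb{Z}) = 0$ yields $H^2(f^{-1}(N(C));\mathbb{Z}) = 0$, so the Euler class of the restricted oriented bundle vanishes as well and the restriction is trivial; Proposition \ref{prop:1} then supplies the P-operation and delivers a $C^\infty$ L-trivial $f'$ automatically from its own conclusion.

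When $n = 3$, I instead reduce to Corollary \ref{cor:1}. Finite generation of $H_{\ast}(F_C)$ together with the universal coefficient theorem turns $H^1(F_C;\mathbb{Z}/2\mathbb{Z}) \cong \{0\}$ into $H_1(F_C;\mathbb{Z}/2\mathbb{Z}) \cong \{0\}$; since each regular fiber $F_s$ is diffeomorphic to a boundary component of some adjacent $F_C$, the inclusion $i_s$ factors up to homotopy as $F_s \hookrightarrow F_C \hookrightarrow M$, and therefore $(i_s)_{\ast}(H_1(F_s;\mathbb{Z}/2\mathbb{Z})) = 0$ in $H_1(M;\mathbb{Z}/2\mathbb{Z})$ for every $s$. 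The Stiefel-Whitney hypothesis of Corollary \ref{cor:1} is thus automatic for any $S^1$-bundle, and Corollary \ref{cor:1} produces the desired L-trivial round fold map. The main (and essentially only) obstacle is keeping the K\"unneth bookkeeping straight and confirming that in each case the obstruction classes actually land in cohomology groups I have shown to vanish; the rest is a formal reduction to the results already proved in the paper.
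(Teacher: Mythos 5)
Your overall strategy matches the paper's intent exactly: the paper offers no independent proof of Proposition \ref{prop:3} beyond the remark that it is a specific case of Theorems \ref{thm:1} and \ref{thm:2} (via Corollary \ref{cor:1}), so deriving it from those results is the right move, and your verification of the hypotheses is essentially correct. The one fragile point is the choice $S=V_{f_F}$. Definition \ref{def:4} admits it only if one allows paths of length zero, and the proofs of Theorems \ref{thm:1} and \ref{thm:2} establish triviality of the restricted bundle on $f^{-1}(N(C_0))$ by propagating it across an \emph{edge} incident to $C_0$; with $S=V_{f_F}$ and a singular value component carrying no edge in either direction, the cited proofs do not actually cover that component, so you are invoking the theorems in a degenerate regime their arguments do not visibly support. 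This does not ultimately hurt you, for two different reasons in the two parts. In the ``furthermore'' part, $H^1(F_C;\mathbb{Z}/2\mathbb{Z})\cong\{0\}$ forces $H_1(F_C;\mathbb{Z}/2\mathbb{Z})\cong\{0\}$, so every homomorphism $H_1({\partial}_j F_C;\mathbb{Z}/2\mathbb{Z})\rightarrow H_1(F_C;\mathbb{Z}/2\mathbb{Z})$ is surjective, every adjacent pair is an edge of the $(H_1,\mathbb{Z}/2\mathbb{Z})$-graph, and the induction genuinely runs; your reduction is airtight there. In the first part, orientability of the global bundle passes to every restriction, and for $n\geq 4$ the Euler class of the restriction lives in $H^2(f^{-1}(N(C));\mathbb{Z})\cong H^2(S^{n-1}\times F_C;\mathbb{Z})\cong H^2(F_C;\mathbb{Z})\cong\{0\}$, so triviality on each $f^{-1}(N(C))$ --- which is all Proposition \ref{prop:1} needs --- follows by a one-line direct argument that bypasses the graph machinery entirely; I would state that argument rather than lean on the degenerate starting set. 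For $n=3$ the appeal to Corollary \ref{cor:1} is genuinely necessary, since $H^2(f^{-1}(N(C));\mathbb{Z})$ no longer vanishes and the connectedness of $M$ together with $f(M)\cong D^3$ is what kills the Euler class there.
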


\begin{Prop}
\label{prop:4}
Let $M$ be a closed manifold of dimension $m \geq 2$ and $f:M \rightarrow {\mathbb{R}}^2$ be a 
$C^{\infty}$ L-trivial round fold map.
\begin{enumerate}
\item For any connected component $C$ of $f(S(f))$, we denote by $N(C)$ a small closed tubular neighborhood of $C$ such that $f^{-1}(N(C))$ has the
 structures of trivial bundles
 as in Definition \ref{def:3} and  by $F_C$ a normal fiber of $C$ corresponding to the bundles $f^{-1}(N(C))$
 over $C$. Let $F_C$ satisfy $H^{1}(F_C;\mathbb{Z}) \cong H^{2}(F_C;\mathbb{Z}) \cong \{0\}$. Then, by a P-operation by $S^1$ on $f$, for any closed manifold having the structure of an $S^1$-bundle over $M$ such that
 for any connected component $C$ of $f(S(f))$, the restriction of the bundle to $f^{-1}(\partial N(C))$ is an orientable bundle, we
 can obtain a round fold map $f^{\prime}:M^{\prime} \rightarrow {\mathbb{R}}^2$. \\
\item Furthermore, if $f(M)$ is diffeomorphic
 to $D^2$ and $M$ is connected, then by a P-operation by $S^1$ on $f$, for any closed manifold having the structure of an $S^1$-bundle over $M$, we
 can obtain a round fold map $f^{\prime}:M^{\prime} \rightarrow {\mathbb{R}}^2$.
\end{enumerate}
\end{Prop}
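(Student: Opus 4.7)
Plan. I aim to prove Proposition \ref{prop:4} by reducing both parts to Proposition \ref{prop:1}, which demands that any $S^1$-bundle $M^{\prime} \to M$ under consideration restrict to a trivial bundle on each piece $f^{-1}(N(C))$, with $C$ ranging over the connected components of $f(S(f))$. By the L-triviality of $f$, each such piece is diffeomorphic to $C \times F_C \cong S^1 \times F_C$, so I need to show that both the first Stiefel-Whitney class $w_1$ and the Euler class $e$ of the restricted $S^1$-bundle vanish on each $f^{-1}(N(C))$.

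Cohomological reduction. The hypothesis $H^1(F_C;\mathbb{Z}) \cong H^2(F_C;\mathbb{Z}) \cong \{0\}$ combined with the universal coefficient theorem forces $H_1(F_C;\mathbb{Z}) = 0$: the first vanishing makes $H_1(F_C;\mathbb{Z})$ torsion (since $\mathrm{Hom}(H_1,\mathbb{Z}) = 0$) while the second, via the Ext term in the UCT for $H^2$, makes $H_1(F_C;\mathbb{Z})$ free. Consequently $H^1(F_C;\mathbb{Z}/2) = 0$. The K\"unneth formula then yields
\[
H^1(S^1 \times F_C;\mathbb{Z}/2) \cong H^1(S^1;\mathbb{Z}/2) \otimes H^0(F_C;\mathbb{Z}/2), \qquad H^2(S^1 \times F_C;\mathbb{Z}) = 0.
\]
Thus $e$ vanishes automatically on each $f^{-1}(N(C))$, and $w_1$ is determined componentwise: for each connected component of $F_C$, the sole datum is the restriction of $w_1$ to $S^1 \times \{\mathrm{pt}\}$.

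Proofs of the two parts. For part 1, I decompose $w_1 = \alpha \otimes \beta$ with $\alpha \in H^1(S^1;\mathbb{Z}/2)$ and $\beta \in H^0(F_C;\mathbb{Z}/2)$; the orientability hypothesis on $f^{-1}(\partial N(C))$ kills the restriction of $w_1$ to each $S^1 \times \partial_j F_C$, and since $F_C$ is a cobordism arising from fold singularities with every connected component meeting the boundary $\partial_i F_C \cup \partial_o F_C$, the restrictions of $\beta$ to the boundary pieces collectively detect $\beta$. Hence $\beta = 0$, $w_1$ vanishes, the bundle is trivial on $f^{-1}(N(C))$, and Proposition \ref{prop:1} applies. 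For part 2, the additional hypotheses $f(M) \cong D^2$ and $M$ connected let me deduce the orientability of part 1 automatically: the inverse image $f^{-1}(D^n{}_{\frac{1}{2}})$ of the proper core is a trivial $F$-bundle over a disk, on which every $S^1$-bundle pulls back from $F$ and has vanishing $w_1$-component in the $S^1$-direction; an induction outward through the annular pieces $f^{-1}(P_k)$, using connectedness of $M$ together with the L-trivial subbundle structure at each fold, propagates orientability to every boundary circle $f^{-1}(\partial N(C))$, and part 1 finishes the argument.

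Main obstacle. The delicate point is the propagation argument in part 2: I must carefully track which restriction maps on $H^1(\cdot\,;\mathbb{Z}/2)$ are injective and how the trivial subbundle structures coming from L-triviality couple with the global connectedness of $M$ to force the first Stiefel-Whitney class of any $S^1$-bundle over $M$ to vanish after restriction to every $f^{-1}(\partial N(C))$. This is where the two extra hypotheses of part 2 are essential; the routine cohomology of part 1 is, by contrast, almost mechanical given the Künneth and UCT reductions above.
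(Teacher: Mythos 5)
Your overall strategy --- reduce to Proposition \ref{prop:1} by proving that the given $S^1$-bundle restricts to a trivial bundle on each $f^{-1}(N(C))$, and detect the obstructions $w_1$ and $e$ on $f^{-1}(N(C))\cong S^1\times F_C$ via K\"unneth --- is exactly the mechanism behind the paper's Theorems \ref{thm:1}--\ref{thm:4}, of which Proposition \ref{prop:4} is presented (without a written proof here, as a citation of \cite{kitazawa 5}) as a special case. Your treatment of part 1 is correct and essentially complete: the universal coefficient argument giving $H_1(F_C;\mathbb{Z})=0$, hence $H^1(F_C;\mathbb{Z}/2\mathbb{Z})=0$ and $H^2(S^1\times F_C;\mathbb{Z})=0$, is right, and the observation that every connected component of $F_C$ meets $\partial F_C$ (a closed component would force the interval-valued Morse function on $F_C$ to be constant at its unique critical value, which is impossible) makes $H^0(F_C;\mathbb{Z}/2\mathbb{Z})\to H^0(\partial F_C;\mathbb{Z}/2\mathbb{Z})$ injective, so the orientability hypothesis on $f^{-1}(\partial N(C))$ kills $w_1$ and triviality follows.

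Part 2, however, has a genuine gap, precisely at the step you flag. An induction \emph{outward} through the annular pieces propagates the vanishing of $w_1$ from the inner boundary $f^{-1}(C_a)$ only to those components $K$ of $F_C$ with $K\cap\partial_a F_C\neq\emptyset$. The hypotheses do not exclude components meeting only the outer boundary (for instance a definite fold born as one moves outward, or for $m-n\geq 3$ a sphere-with-three-discs-removed piece attached only on the outer side, as in Example \ref{ex:1} (\ref{ex:1.3})); for such a $K$ the inner boundary carries no information about the $H^0(F_C;\mathbb{Z}/2\mathbb{Z})$-component of $w_1$ on $K$, and this is exactly the failure mode that the $(H_0,R)$-graph and start-equivalence conditions in Theorems \ref{thm:1} and \ref{thm:2} are designed to control. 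The correct use of the two extra hypotheses is not a one-directional induction: the circles $C\times\{\mathrm{pt}\}$, taken over all concentric circles in $f(M)$ and all points of all fibers, sweep out all of $M$ and vary continuously, so their free homotopy classes are indexed by a connected quotient of the connected manifold $M$; hence they all lie in a single free homotopy class, and since $f(M)\cong D^2$ the innermost ones bound the discs ${D^2}_{\frac{1}{2}}\times\{\mathrm{pt}\}$ inside $f^{-1}(P)$. Therefore $w_1$ of an arbitrary $S^1$-bundle over $M$ vanishes on every such circle, which together with your part 1 computation of $H^1(S^1\times F_C;\mathbb{Z}/2\mathbb{Z})$ and $H^2(S^1\times F_C;\mathbb{Z})$ finishes the argument. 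Without this replacement (or the equivalent graph bookkeeping of the paper), your proof of part 2 is incomplete.
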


\begin{Ex}
\label{ex:2}
 For any pair of positive integers $m$ and $n$ satisfying $m>n \geq 2$, we can consider round fold maps from $m$-dimensional closed manifolds into ${\mathbb{R}}^{n}$ as in Example \ref{ex:1} (\ref{ex:1.3}). A special generic map from
 the $m$-dimensional homotopy sphere into ${\mathbb{R}}^{n}$ presented in Example \ref{ex:1} (\ref{ex:1.1}) are simplest examples of such
 maps. If for the two integers, $m-n>2$ holds, then the maps satisfy the assumption of Theorem \ref{thm:1}. In this case, the maps satisfy the
 assumption of Proposition \ref{prop:3} or \ref{prop:4}. In this
 case, the
$(H_1;\mathbb{Z}/2\mathbb{Z})$-graph of $f_F=(f,\{F_C\}_{C \in {\pi}_0 (f(S(f)))})$ is a graph satisfying
 the followings. 
\begin{enumerate}
\item The set $E^{(H_1,\mathbb{Z}/2\mathbb{Z})}_{f_F} \subset V_{f_F} \times V_{f_F}$ consist of all the pairs of $(s_1,s_2)$ such that the closures
 of connected components of ${\mathbb{R}}^n-f(S(f))$ represented by $s_1$ and $s_2$ intersect.
\item The set consisting of just one vertex $s \in V_{f_F}$ representing the closure of the connected component of ${\mathbb{R}}^n-f(S(f))$ including a proper
 core of $f$ is a starting set of the graph.
\end{enumerate}

For any non-negative integer $k \neq 0,m-n$ and any commutative ring $R$, the $(H_k;R)$-graph of $f_F=(f,\{F_C\}_{C \in {\pi}_0 (f(S(f)))})$ and the graph above
 coincide. The $(H_0;R)$-graph of $f_F=(f,\{F_C\}_{C \in {\pi}_0 (f(S(f)))})$ satisfies the followings.
\begin{enumerate}
\item The set $E^{(H_0,R)}_{f_F} \subset V_{f_F} \times V_{f_F}$ consist of all the pairs of $(s_1,s_2)$ such that the closures
 of connected components of ${\mathbb{R}}^n-f(S(f))$ represented by $s_1$ and $s_2$ intersect but a pair $(s_1,s_2)$ where
 $s_1$ or $s_2$ represents the closure of the unbounded connected component of ${\mathbb{R}}^n-f(S(f))$.
\item The set consisting of just one vertex $s \in V_{f_F}$ representing the closure of the connected component of ${\mathbb{R}}^n-f(S(f))$ including a proper
 core of $f$ is a starting set of the graph.
\end{enumerate}

\ \ \ If $m-n=1$ holds, then
 the maps do not satisfy the assumption of Propositions \ref{prop:3} or \ref{prop:4}, either. However, the maps satisfy the assumptions
 of Theorems \ref{thm:1} and \ref{thm:2}. In this
 case, for any commutative ring $R$, the
$(H_1,R)$-graph of $f_F=(f,\{F_C\}_{C \in {\pi}_0 (f(S(f)))})$ is a graph satisfying
 the followings. 
\begin{enumerate}
\item The set ${E}^{(H_1,R)}_{f_F} \subset V_{f_F} \times V_{f_F}$ consist of the followings.
\begin{enumerate}
\item All the pairs $(s_1,s_2) \in V_{f_F} \times V_{f_F}$ such that the closures
 of connected components of ${\mathbb{R}}^n-f(S(f))$ represented by $s_1$ and $s_2$ intersect and that the closure of the connected component of ${\mathbb{R}}^n-f(S(f))$ represented by $s_1$
 is in the bounded connected component of the complement set of the connected component of ${\mathbb{R}}^n-f(S(f))$ whose closure is represented by $s_2$ in ${\mathbb{R}}^n$.
\item The pair $(s_1,s_2) \in V_{f_F} \times V_{f_F}$ such that $s_1$ represents the closure of the unbounded connected component of ${\mathbb{R}}^n-f(S(f))$ and that
 the previous closed set and the closure of the connected component of ${\mathbb{R}}^n-f(S(f))$ represented by $s_2$ intersect.
\end{enumerate}
\item The set consisting of just one vertex $s \in V_{f_F}$ representing the closure of the connected component of ${\mathbb{R}}^n-f(S(f))$ including a proper
 core of $f$ is a starting set of the graph.
\end{enumerate} 

For any non-negative integer $k \neq 1$ and any commutative ring $R$, the $(H_k;R)$-graph of $f_F=(f,\{F_C\}_{C \in {\pi}_0 (f(S(f)))})$ is a graph mentined in the case where $m-n>2$ holds. \\
\end{Ex}

By analogy of the proof of Theorem \ref{thm:1}, we have the following theorem.

\begin{Thm}
\label{thm:3}
Let $M$ be a closed manifold of dimension $m$ and $f:M \rightarrow {\mathbb{R}}^n$
 {\rm (}$m \geq n \geq 2${\rm )} be a $C^{\infty}$ L-trivial round fold map. For any connected component $C$ of $f(S(f))$, we denote by $N(C)$ a small closed tubular neighborhood of $C$ such that the
 inverse image has the structures of trivial bundles as in Definition \ref{def:2} and by $F_C$ a normal
 fiber of $C$ corresponding to the bundles $f^{-1}(N(C))$ over $C$. Let $H_1(F_C;\mathbb{Z})$ be torsion-free. 
Let $S$ be a starting set of the
$(H_2,\mathbb{Z})$-graph of $f_F=(f,\{F_C\}_{C \in {\pi}_0 (f(S(f)))})$. Let $F_s$ be the regular fiber of a point in a connected component of ${\mathbb{R}}^n-f(S(f))$ whose
 closure is represented as a vertex $s \in S$ of the graph above and let $i_s:F_s \rightarrow M$ be the natural inclusion. Then, we have the followings.
\begin{enumerate}
\item Let $n \geq 4$ hold. Then, by a P-operation on $f$, for any
 closed manifold $M^{\prime}$ having the structure of an oriented $S^1$-bundle over $M$ whose
 Euler class vanishes on ${i_s}_{\ast}(H_2(F_s;\mathbb{Z})) \subset H_2(M;\mathbb{Z})$ for any $s \in S$, we
 can obtain a round fold map $f^{\prime}:M^{\prime} \rightarrow {\mathbb{R}}^n$. 
\item Let $n=3$ hold. Furthermore, for any commutative ring $R$, the $(H_2,\mathbb{Z})$-graph of $f_F=(f,\{F_C\}_{C \in {\pi}_0 (f(S(f)))})$
 and the $(H_0,R)$-graph of $f_F=(f,\{F_C\}_{C \in {\pi}_0 (f(S(f)))})$ are start-equivalent on the set $S$. 
Then, by a P-operation on $f$, for any closed manifold $M^{\prime}$ having the structure of an oriented $S^1$-bundle over $M$ such that
 the followings hold, we
 can obtain a round fold map $f^{\prime}:M^{\prime} \rightarrow {\mathbb{R}}^n$. 
\begin{enumerate} 
\item The Euler class vanishes on ${i_s}_{\ast}(H_2(F_s;\mathbb{Z})) \subset H_2(M;\mathbb{Z})$ for any $s \in S$.
\item For $s \in S$, let ${C_s}^{\prime}$ be a 2-dimensional sphere in a closure of a connected component of ${\mathbb{R}}^n-f(S(f))$ represented by $s \in S$ such that for a connected component $C_s$ of $f(S(f))$ and the small closed
 tubular neighborhood $N(C_s)$ defined before, ${C_s}^{\prime}$ is a connected component of $\partial N(C)$. The restriction of the bundle
 to the image of a section of the trivial bundle given by $f {\mid}_{f^{-1}({C_s}^{\prime})}:f^{-1}({C_s}^{\prime}) \rightarrow {C_s}^{\prime}$ is trivial. 
\end{enumerate} 
\item Let $n=2$ hold. Furthermore, we also assume that for any commutative ring $R$, the $(H_2;\mathbb{Z})$-graph
 of $f_F=(f,\{F_C\}_{C \in {\pi}_0 (f(S(f)))})$
 and the $(H_1;\mathbb{Z})$-graph of $f_F=(f,\{F_C\}_{C \in {\pi}_0 (f(S(f)))})$ are start-equivalent on the set $S$. \\
\ \ \ For $s \in S$, let ${C_s}^{\prime}$ be a circle in a closure of a connected component of ${\mathbb{R}}^2-f(S(f))$ represented by $s \in S$ such that for a connected component $C_s$ of $f(S(f))$ and the small closed
 tubular neighborhood $N(C_s)$ defined before, ${C_s}^{\prime}$ is a connected component of $\partial N(C_s)$. Let $M^{\prime}$ be a manifold having the structure of an oriented $S^1$-bundle over $M$ such that
 the restriction of the bundle
 to the total space of the trivial bundle given by $f {\mid}_{f^{-1}({C_s}^{\prime})}:f^{-1}({C_s}^{\prime}) \rightarrow {C_s}^{\prime}$ is orientable and trivial. \\
\ \ \ Then, by a P-operation on $f$, we
 can obtain a round fold map $f^{\prime}:M^{\prime} \rightarrow {\mathbb{R}}^n$.
\end{enumerate}
\end{Thm}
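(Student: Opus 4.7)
The strategy mirrors the proof of Theorem \ref{thm:1}, but now one tracks the Euler class in integral cohomology rather than the first Stiefel--Whitney class in $\mathbb{Z}/2\mathbb{Z}$-cohomology. As before, it suffices to show that for every connected component $C \subset f(S(f))$ the given oriented $S^1$-bundle over $M$ restricts to a trivial bundle on $f^{-1}(N(C))$; once that is established, the same P-operation used in the proof of Proposition \ref{prop:1} (and Theorem \ref{thm:1}) glues the local trivial-bundle models together to produce $f'$. Since the bundle is already assumed oriented, the sole remaining obstruction on each piece $f^{-1}(N(C))$ is the Euler class in $H^2(f^{-1}(N(C));\mathbb{Z})$, so the entire argument reduces to verifying that this class vanishes on every $f^{-1}(N(C))$.

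The propagation along $f(S(f))$ is an induction along the $(H_2,\mathbb{Z})$-graph. Suppose $C_0 \subset f(S(f))$ is a component lying between two regions whose closures correspond to vertices $s_1,s_2 \in V_{f_F}$ with $(s_1,s_2) \in E^{(H_2,\mathbb{Z})}_{f_F}$. Then $H_2(\partial_1 F_{C_0};\mathbb{Z}) \to H_2(F_{C_0};\mathbb{Z})$ is surjective. Using the triviality $f^{-1}(C_1) \cong C_0 \times \partial_1 F_{C_0}$ and $f^{-1}(N(C_0)) \cong C_0 \times F_{C_0}$ together with the K\"unneth formula (the torsion-freeness of $H_1(F_C;\mathbb{Z})$ being invoked to eliminate $\mathrm{Tor}$ terms), I would argue that the restriction $H^2(f^{-1}(N(C_0));\mathbb{Z}) \to H^2(f^{-1}(C_1);\mathbb{Z})$ has trivial kernel. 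Hence the vanishing of the Euler class on $f^{-1}(C_1)$ forces its vanishing on $f^{-1}(N(C_0))$, and the inductive step goes through.

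The three cases $n \geq 4$, $n=3$, $n=2$ differ only in the base-case computation on $f^{-1}({C_s}^{\prime}) \cong S^{n-1} \times F_s$. For $n \geq 4$, $H^2(S^{n-1} \times F_s;\mathbb{Z}) \cong H^2(F_s;\mathbb{Z})$, so the hypothesis that the Euler class vanishes on ${i_s}_{\ast}(H_2(F_s;\mathbb{Z}))$ directly kills it. For $n=3$, a K\"unneth cross-term $H^2(S^2;\mathbb{Z}) \otimes H^0(F_s;\mathbb{Z})$ appears, and the hypothesis that the bundle is trivial on the image of a section of $f^{-1}({C_s}^{\prime}) \to {C_s}^{\prime}$ is precisely what annihilates it; the inductive step then also needs to propagate vanishing on the $H^0$-summand, which is what the start-equivalence between the $(H_2,\mathbb{Z})$- and $(H_0,R)$-graphs is designed to supply. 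For $n=2$, the K\"unneth decomposition of $H^2(S^1 \times F_s;\mathbb{Z})$ introduces mixed $H^1 \otimes H^1$ pieces, so the start-equivalence with the $(H_1,\mathbb{Z})$-graph is needed to carry $H_1$-surjectivity through the induction, while the stronger triviality hypothesis on the total space $f^{-1}({C_s}^{\prime})$ initializes the base case.

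The main obstacle I expect is the $n=2$ case, where $H^2(S^1 \times F_s;\mathbb{Z})$ splits into several K\"unneth summands of roughly equal weight and one must verify that each injects into $H^2(f^{-1}(C_1);\mathbb{Z})$ (or is killed by the hypotheses) without any $\mathrm{Tor}$ contamination. Here the torsion-freeness of $H_1(F_C;\mathbb{Z})$ is essential, and the bookkeeping of which starting-set and start-equivalence conditions propagate which graded piece of the cohomology must be carried out with care, in close parallel to the third case of the proof of Theorem \ref{thm:1}.
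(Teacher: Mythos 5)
Your proposal follows essentially the same route as the paper: reduce to triviality of the restricted bundle on each $f^{-1}(N(C))$, propagate vanishing of the Euler class along the $(H_2,\mathbb{Z})$-graph by showing via the K\"unneth formula, the universal coefficient theorem and the torsion-freeness of $H_1(F_C;\mathbb{Z})$ that $H^2(f^{-1}(N(C_0));\mathbb{Z}) \rightarrow H^2(f^{-1}(C_1);\mathbb{Z})$ has trivial kernel, and handle the base cases and the extra $H^0$- and $H^1$-summands for $n=3$ and $n=2$ exactly as the start-equivalence hypotheses are designed to allow. Your write-up is, if anything, more explicit than the paper's about which hypothesis controls which K\"unneth summand; no gap.
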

\begin{proof}
Let $C_0$ be a connected component of $f(S(f))$ which is in the closures of two connected component of ${\mathbb{R}}^n-f(S(f))$ represented by
 two vertices $s_1,s_2 \in V_{f_F}$ and let $(s_1,s_2) \in E_{f_F}$. We use $C_j$ and ${\partial}_j F_{C_0}$ as in the proof of Theorem \ref{thm:1}. \\
\ \ \ The homomorphism
 from $H_2(f^{-1}(C_1);\mathbb{Z}) \cong H_2(C_1 \times {\partial}_1 F_{C_0};\mathbb{Z}) \cong (H_0({\partial}_1 F_{C_0};\mathbb{Z}) \otimes H_2(C_0;\mathbb{Z})) \oplus (H_1(C_0;\mathbb{Z}) \otimes H_1({\partial}_1 F_{C_0};\mathbb{Z})) \oplus H_2({\partial}_1 F_{C_0};\mathbb{Z})$
 into $H_2(f^{-1}(N(C));\mathbb{Z}) \cong (H_0(F_C;\mathbb{Z}) \otimes H_2(C;\mathbb{Z})) \oplus (H_1(C;\mathbb{Z}) \otimes H_1(F_C;\mathbb{Z})) \oplus H_2(F_C;\mathbb{Z})$ induced by
 the natural inclusion is assumed to be surjective in the first case, since the homomorphism
 from $H_2({\partial}_1 F_{C_0};\mathbb{Z})$ into $H_2(F_C;\mathbb{Z})$ induced by the natural inclusion is assumed to be surjective. In the second case, the homomorphism
 from $H_k({\partial}_1 F_{C_0};\mathbb{Z})$ into $H_k(F_C;\mathbb{Z})$ induced by the natural inclusion is assumed to be surjective for $k=0,2$ and as a result, the homomorphism above is surjective. In the last case, the homomorphism from $H_k({\partial}_1 F_{C_0};\mathbb{Z})$ into $H_k(F_C;\mathbb{Z})$ induced by the natural
 inclusion is assumed to be surjective for $k=1, 2$ and as a result, the homomorphism above is surjective. Thus, the
 kernel of the homomorphism from $H^2(f^{-1}(N(C));\mathbb{Z})$ into $H^2(f^{-1}(C_1);\mathbb{Z})$ induced by
 the natural inclusion is $\{0\}$ in all the three cases by virtue of the universal coefficient theorem together with the assumption that the
 group $H_1(F_C;\mathbb{Z})$ is torsion-free for any connected component $C$ of $f(S(f))$. So, if the Euler class of the restriction of
 an oriented $S^1$-bundle over $M$ to $f^{-1}(C_1)$ vanishes, then the Euler classes of the restrictions of the bundle
 over $M$ to $f^{-1}(N(C_0))$ vanish and the obtained bundles are trivial bundles. \\
\ \ \ By arguments similar to ones in the proofs of Theorem \ref{thm:1}, for each case, for any connected component $C$ of $f(S(f))$, the restriction of any oriented $S^1$-bundle
 over $M$ satisfying the given condition to $f^{-1}(N(C))$ is
 a trivial bundle. This completes the proof of the statement for all the three cases. 
\end{proof}

By analogy of the proof of Theorem \ref{thm:2}, we have the following theorem.

\begin{Thm}
\label{thm:4}
Let $M$ be a closed and connected manifold of
 dimension $m \geq 3$ and $f:M \rightarrow {\mathbb{R}}^3$ be a $C^{\infty}$ L-trivial round fold map. For any connected component $C$ of $f(S(f))$, we denote by $N(C)$ a small closed tubular neighborhood of $C$ such that the
 inverse image has the structures of trivial bundles as in Definition \ref{def:2} and by $F_C$ a normal
 fiber of $C$ corresponding to the bundles $f^{-1}(N(C))$ over $C$.  Let $H_1(F_C;\mathbb{Z})$ be torsion-free. 
Let $S$ be a starting set of the
$(H_2,\mathbb{Z})$-graph of $f_F=(f,\{F_C\}_{C \in {\pi}_0 (f(S(f)))})$. Let $F_s$ be the regular fiber of a point in a connected component of ${\mathbb{R}}^n-f(S(f))$ whose
 closure is represented as a vertex $s \in S$ of the graph above and let $i_s:F_s \rightarrow M$ be the natural inclusion. Then, by
 a P-operation to $f$, for any closed manifold $M^{\prime}$ having the structure of an oriented $S^1$-bundle over $M$ such that
 the followings hold, we
 can obtain a round fold map $f^{\prime}:M^{\prime} \rightarrow {\mathbb{R}}^3$. 
\begin{enumerate} 
\item The Euler class vanishes on ${i_s}_{\ast}(H_2(F_s;\mathbb{Z})) \subset H_2(M;\mathbb{Z})$ for any $s \in S$.
\item For a $2$-dimensional sphere ${C_0}^{\prime}$ in a connected component of $({\mathbb{R}}^3-f(S(f))) \bigcap f(M)$ such that for a
 connected component $C_0$ of $f(S(f))$ and the small closed
 tubular neighborhood defined before, ${C_0}^{\prime}$ is a connected component of $\partial N(C_0)$, the restriction of the bundle
 to the image of a section of the trivial bundle given by $f {\mid}_{f^{-1}({C_0}^{\prime})}:f^{-1}({C_0}^{\prime}) \rightarrow {C_0}^{\prime}$ is trivial.
\end{enumerate} 

\end{Thm}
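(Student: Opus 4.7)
The plan is to mimic the proof of Theorem~\ref{thm:2} in the case $n=3$, swapping $\mathbb{Z}/2\mathbb{Z}$-coefficients and the $1$st Stiefel--Whitney class for $\mathbb{Z}$-coefficients and the Euler class, and importing the K\"unneth / universal-coefficient computations from the proof of Theorem~\ref{thm:3}. By Proposition~\ref{prop:1}, constructing $f^{\prime}$ reduces to showing that the oriented $S^1$-bundle $M^{\prime} \to M$ restricts to a trivial bundle on $f^{-1}(N(C))$ for every connected component $C$ of $f(S(f))$; by Proposition~\ref{prop:2}\,(\ref{prop:2.3}) this is equivalent to showing that the Euler class $e \in H^2(M;\mathbb{Z})$ pulls back to $0$ in $H^2(f^{-1}(N(C));\mathbb{Z})$ for each such $C$.

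Fix $C$. Because $C \cong S^2$ has $H^1(S^2;\mathbb{Z}) = 0$ and $H_1(F_C;\mathbb{Z})$ is torsion-free, the K\"unneth formula combined with the universal coefficient theorem yields the clean splitting
\[
H^2(f^{-1}(N(C));\mathbb{Z}) \;\cong\; \bigl(H^2(C;\mathbb{Z}) \otimes H^0(F_C;\mathbb{Z})\bigr) \,\oplus\, H^2(F_C;\mathbb{Z}),
\]
and the vanishing of $e\mid_{f^{-1}(N(C))}$ would be verified summand by summand. The $H^2(F_C)$-summand is handled exactly as in the proof of Theorem~\ref{thm:3}: for each edge of the $(H_2,\mathbb{Z})$-graph across a component $C_0$, surjectivity of $H_2({\partial}_1 F_{C_0};\mathbb{Z}) \to H_2(F_{C_0};\mathbb{Z})$ kills the kernel of the restriction $H^2(f^{-1}(N(C_0));\mathbb{Z}) \to H^2(f^{-1}(C_1);\mathbb{Z})$ on this summand, so induction along paths of the graph starting from $S$ -- with hypothesis~(1) supplying the base case through the inclusions $i_s : F_s \to M$ -- propagates vanishing of the fiber part of $e$ to every $C$. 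The $H^2(C) \otimes H^0(F_C)$-summand records the Euler numbers of the restrictions of $M^{\prime}$ to the $2$-sphere sections $C \times \{\mathrm{pt}\}$; hypothesis~(2) provides one such section (lying above $C_0^{\prime}$) on which the bundle is trivial, and the propagation of this vanishing to all other such sections is carried here by connectedness of $M$, exactly as in the proof of Theorem~\ref{thm:2}.

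The main obstacle is precisely this last propagation. In Theorem~\ref{thm:3} case $n=3$ the analogous step required an extra start-equivalence of the $(H_2,\mathbb{Z})$-graph with an $(H_0,R)$-graph in order to move between distinct connected components of $\partial_1 F_{C_0}$; here, connectedness of $M$ has to stand in for that extra hypothesis. The argument I envisage is that once orientability of the restricted bundle is established summand-by-summand along the graph, the Euler number of the restriction to any $2$-sphere section $C \times \{\mathrm{pt}\}$ is a single integer depending only on the homology class of that section in $H_2(M;\mathbb{Z})$, and any two such $2$-sphere sections sitting in a connected manifold of dimension $m \geq 3$ can be linked by a thickening of an embedded $1$-path into an oriented $3$-chain, so they represent the same class and carry the same Euler number; hypothesis~(2) then pins that common integer to $0$. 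Once both summand-vanishings are in place, Proposition~\ref{prop:1} converts the now-verified triviality of the restrictions of $M^{\prime} \to M$ over each $f^{-1}(N(C))$ into the round fold map $f^{\prime}:M^{\prime} \to \mathbb{R}^3$ of the statement.
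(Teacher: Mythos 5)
Your proposal reproduces the paper's own proof of Theorem \ref{thm:4} essentially step for step: the same reduction to triviality of the restriction of the oriented $S^1$-bundle over each $f^{-1}(N(C))$, the same K\"unneth/universal-coefficient splitting of $H^2(f^{-1}(N(C));\mathbb{Z})$ into an $H^2(F_C;\mathbb{Z})$-summand and an $H^2(C;\mathbb{Z})\otimes H^0(F_C;\mathbb{Z})$-summand (using that $H_1(F_C;\mathbb{Z})$ is torsion-free), the same propagation of the fiber summand along the $(H_2,\mathbb{Z})$-graph from the starting set via hypothesis (1), and the same use of hypothesis (2) together with connectedness of $M$ to kill the remaining summand. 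So the approach and the overall structure are correct and match the paper.

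The one place you add detail beyond the paper is also the one place your justification does not hold as stated: it is false that any two embedded $2$-spheres in a connected manifold of dimension $m\geq 3$ become homologous by ``thickening an embedded $1$-path into an oriented $3$-chain'' (compare $S^2\times\{p\}$ with a small null-homotopic sphere in $S^2\times S^1$). What actually forces all the sphere sections to carry the same Euler number is the structure of $f$ itself: over the annular region between two adjacent components of $f(S(f))$ the map is a proper submersion, so its preimage is a product in the radial direction and a section over one boundary sphere is isotopic, through sphere sections, to one over the other; inside each $f^{-1}(N(C))\cong C\times F_C$ a path in a connected component of $F_C$ sweeps out $C\times [0,1]$ and does the same; and connectedness of $M$ guarantees that any two sections are joined by a chain of such moves. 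This is how the paper's terse appeal to connectedness should be read, and with that substitution your argument is complete.
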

\begin{proof}
Let $C$ be a connected component of $f(S(f))$. For the small tubular neighborhood $N(C)$ of $C$, let $C^{\prime}$ be a connected component of the boundary of the tubular neighborhood $N(C)$ and let ${F_C}^{\prime}$ be
 a fiber of the bundle given by $f {\mid}_{f^{-1}(C^{\prime})}:f^{-1}(C^{\prime}) \rightarrow C^{\prime}$. We have $H^2(f^{-1}({C}^{\prime});\mathbb{Z}) \cong H^2({C}^{\prime} \times {F_C}^{\prime};\mathbb{Z}) \cong (H^2({C}^{\prime};\mathbb{Z}) \otimes H^0({F_C}^{\prime};\mathbb{Z})) \oplus (H^0({C}^{\prime};\mathbb{Z}) \otimes H^2({F_C}^{\prime};\mathbb{Z})) \cong (\mathbb{Z} \otimes H^0({F_C}^{\prime};\mathbb{Z})) \oplus (\mathbb{Z} \otimes H^2({F_C}^{\prime};\mathbb{Z}))$
 and $H^2(f^{-1}(N(C));\mathbb{Z}) \cong H^2(C \times F_C;\mathbb{Z}) \cong (H^2(C;\mathbb{Z}) \otimes H^0(F_C;\mathbb{Z})) \oplus (H^0(C;\mathbb{Z}) \otimes H^2(F_C;\mathbb{Z})) \cong \mathbb{Z} \otimes H^0(F_C;\mathbb{Z}) \oplus (\mathbb{Z} \otimes H^2(F_C;\mathbb{Z}))$.
For any connected component $C$ of $f(S(f))$, the restriction of an oriented $S^1$-bundle
 over $M$ satisfying the conditions mentioned in the assumption of this theorem to a fiber $F_C$ of the bundle $f^{-1}(N(C))$ over $C$ is trivial by the
 structures of the groups $H^2(f^{-1}(N(C));\mathbb{Z})$ and $H^2(f^{-1}({C}^{\prime});\mathbb{Z})$ together with the assumption that the $(H_2,\mathbb{Z})$-graph of $f_F=(f,\{F_C\}_{C \in {\pi}_0 (f(S(f)))})$ has a starting set $S$. Furthermore, the restriction of the bundle
 over $M$ satisfying the conditions mentioned in the assumption of this theorem to $f^{-1}(N(C))$ is trivial by the assumptions that the restriction of the bundle
 to the total space of the trivial bundle given by $f {\mid}_{f^{-1}({C_0}^{\prime})}:f^{-1}({C_0}^{\prime}) \rightarrow {C_0}^{\prime}$ is trivial and that the manifold $M$ is connected. Thus, for any connected component $C$ of $f(S(f))$, the restriction of the bundle to $f^{-1}(N(C))$ is trivial. This completes the proof of the desired fact.
\end{proof}

\ \ \ We immediately have the following corollary to Theorem \ref{thm:4}.

\begin{Cor}
\label{cor:2}
Let $M$ be a closed and connected manifold of
 dimension $m \geq 3$ and $f:M \rightarrow {\mathbb{R}}^3$ be a $C^{\infty}$ L-trivial round fold map such that $f(M)$
 is diffeomorphic to $D^3$. For any connected component $C$ of $f(S(f))$, we denote by $N(C)$ a small closed tubular neighborhood of $C$ such that the
 inverse image has the structures of trivial bundles as in Definition \ref{def:2} and by $F_C$ the normal
 fiber of $C$ corresponding to the bundles $f^{-1}(N(C))$ over $C$.  Let $H_1(F_C;\mathbb{Z})$ be torsion-free. 
Let $S$ be a starting set of the
$(H_2,\mathbb{Z})$-graph of $f_F=(f,\{F_C\}_{C \in {\pi}_0 (f(S(f)))})$. Let $F_s$ be the regular fiber of a point in a connected component of ${\mathbb{R}}^n-f(S(f))$ whose
 closure is represented as a vertex $s \in S$ of the graph above and let $i_s:F_s \rightarrow M$ be the natural inclusion. \\
\ \ \ Then, by a P-operation on $f$, for any closed manifold $M^{\prime}$ having the structure of an oriented $S^1$-bundle over $M$ such that
 the Euler class vanishes on ${i_s}_{\ast}(H_2(F_s;\mathbb{Z})) \subset H_2(M;\mathbb{Z})$ for any $s \in S$, we
 can obtain a round fold map $f^{\prime}:M^{\prime} \rightarrow {\mathbb{R}}^3$. 
\end{Cor}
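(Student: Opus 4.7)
The plan is to derive Corollary~\ref{cor:2} directly from Theorem~\ref{thm:4}. The hypotheses on $M$, $f$, $F_C$, $S$, and $F_s$ agree verbatim, as does the condition requiring the Euler class to vanish on ${i_s}_{\ast}(H_2(F_s;\mathbb{Z}))$ for every $s \in S$. The only remaining task is to check that the second condition of Theorem~\ref{thm:4}, namely the triviality of the given bundle on the image of a section of $f|_{f^{-1}(C_0')}$ for a suitable 2-sphere $C_0'$, is automatic under the extra assumption $f(M) \cong D^3$ together with the standing connectedness of $M$.

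To handle this, I would pass to the normal form of $f$, so that the proper core becomes $P_1 = D^3_{1/2}$. By the discussion preceding Definition~\ref{def:3}, the restriction $f|_{f^{-1}(P_1)}$ gives $f^{-1}(P_1)$ the structure of a trivial bundle, whence $f^{-1}(P_1) \cong P_1 \times F$ for a regular fiber $F$ over a point in $P_1$. Letting $C_0 := \partial D^3_1$ be the innermost component of $f(S(f))$ and choosing its small closed tubular neighborhood $N(C_0)$ so that its inner boundary coincides with $\partial P_1$, I set $C_0' := \partial P_1 \cong S^2$. Then $f^{-1}(C_0') \cong C_0' \times F$ sits inside $f^{-1}(P_1)$ as the boundary of the product.

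The image of any section of $f|_{f^{-1}(C_0')}$ then has the form $C_0' \times \{q\}$ for some $q \in F$, and this $2$-sphere bounds the closed $3$-disc $P_1 \times \{q\}$ inside $f^{-1}(P_1) \subset M$; in particular it is null-homologous in $M$. Since oriented $S^1$-bundles over $S^2$ are classified by their Euler number, and this number equals the evaluation of the global Euler class in $H^2(M;\mathbb{Z})$ on the class of the section's image in $H_2(M;\mathbb{Z})$, the restriction of \emph{any} oriented $S^1$-bundle over $M$ to this $2$-sphere is trivial. This verifies the second condition of Theorem~\ref{thm:4} regardless of the choice of $M' \to M$, and the corollary follows immediately.

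The argument presents essentially no obstacle; the only mildly delicate step is recognising the product structure of $f^{-1}(P_1)$, which provides the bounding $3$-disc for the section's image. This is precisely the content of the normal-form description recalled before Definition~\ref{def:3}, so no additional work is required.
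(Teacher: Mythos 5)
Your argument is correct and matches the paper's (unstated) reasoning: the paper simply declares Corollary \ref{cor:2} to be immediate from Theorem \ref{thm:4}, and the content you supply --- that the hypothesis $f(M)\cong D^3$ makes the second condition of Theorem \ref{thm:4} automatic, because the innermost boundary sphere of $N(C_0)$ bounds a disc in $f(M)$ whose preimage is a product $D^3\times F$, so a section sphere bounds a $3$-disc, is null-homologous, and every oriented $S^1$-bundle over $M$ restricts trivially to it --- is exactly the mechanism invoked in the analogous $f(M)\cong D^3$ cases (Proposition \ref{prop:3} and Corollary \ref{cor:1}). One small slip: the image of an arbitrary section of $f{\mid}_{f^{-1}({C_0}^{\prime})}$ is a graph $\{(x,g(x))\}$ and need not equal ${C_0}^{\prime}\times\{q\}$, but since Theorem \ref{thm:4} only requires triviality over the image of \emph{a} section, you may simply take the constant section over each connected component of $F$, so nothing is lost.
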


\begin{Ex}
\label{ex:3}
A round fold map $f$ presented in the beginning of Exmaple \ref{ex:2} satisfies the assumption of Theorems \ref{thm:3} and \ref{thm:4}. In the case where for the two integers, $m-n=2$ holds, the map
 satisfies the assumptions of Theorems \ref{thm:3} and \ref{thm:4} but does not satisfy the assumption of Theorem \ref{thm:1} or that of Theorem \ref{thm:2}, either. In this case, the map does not satisfy the
 assumption of Proposition \ref{prop:3} or Proposition \ref{prop:4}, either. 
\end{Ex}

\section{Algebraic and differential topological properties of round fold maps given by P-operations by $S^1$ and their source manifolds}
\label{sec:4}
We explicitly apply P-operations by $S^1$ to some round fold maps satisfying the assumption of Propositions, Theorems and Corollaries in section \ref{sec:3}. In this section, we
 use the following terminologies and notations. \\ 
\ \ \ For a freely generated commutative group $A$ of rank $r \geq 1$, $x \in A$ is said to
 be {\it primitive} if for the subgroup generated by $x$ (we denote the subgroup by $<x>$), the quotient group $A/<x>$ is a freely
 generated commutative group of rank $r-1$. \\
\ \ \ The diffeomorphism type of a manifold having the structure of a non-trivial $S^k$-bundle ($k=2,3$) over $S^2$ is unique. We
 denote the manifold by $S^k \tilde{\times} S^2$. \\
\subsection{Applications of Propositions and Theorems in section \ref{sec:3} to round fold maps in Example \ref{ex:1} (\ref{ex:1.1}) and (\ref{ex:1.3})}
We may apply Propositions, Theorems and Corollaries in section \ref{sec:3} to obtain a new
 round fold map by a P-operation by $S^1$ to a round fold map $f:M \rightarrow {\mathbb{R}}^n$ in Example \ref{ex:1} (\ref{ex:1.1}) (see also \cite{kitazawa 3}). In fact, by
 any P-operation by $S^1$ to the map $f$, we
 obtain the product of $S^1$ and the original source manifold as the resulting source manifold if the dimension of the original source manifold is larger than $2$. Moreover, by a P-operation by
$S^1$ to such a map from the $2$-dimensional standard sphere $S^2$ into the plane, we obtain a manifold having the structure of an $S^1$-bundle over $S^2$ as the
resulting source manifold and any manifold having the structure of an $S^1$-bundle over $S^2$ is realized as the resulting
source manifold by a P-operation by
 $S^1$ to every such map from $S^2$ into the plane. If for the round fold map $f:M \rightarrow {\mathbb{R}}^n$, the dimension of the source manifold $M$ and $n$ coincide, then the obtained maps
 are maps as in Example \ref{ex:1} (\ref{ex:1.2}) with $F:=S^1$. \\
\ \ \ We consider P-operations by $S^1$ to round fold maps $f:M \rightarrow {\mathbb{R}}^n$ ($n \geq 2$) in Example \ref{ex:1} (\ref{ex:1.3}). \\
\ \ \ First, in the case where $n \geq 3$ and $m \geq 2n$ hold in this example, the manifold having the structure of an $S^1$-bundle over
 the original source manifold $M$ is always the product of $S^1$ and $M$. By any P-operation by $S^1$, we always obtain a source
 manifold diffeomorphic to the product of the two manifolds. \\
\ \ \ In the case where $n=2$ and $m>2n=4$ hold in this example, on any manifold having the structure of an $S^1$-bundle over the original source manifold $M$, we can
 construct a round fold map by a P-operation by $S^1$ to the original map $f$ by Propositions \ref{prop:3} and \ref{prop:4} (we can apply Theorems \ref{thm:1}, \ref{thm:2}, \ref{thm:3} and \ref{thm:4} too). The author studied
 the diffeomorphism types of the resulting source manifolds of the resulting round
 fold maps in \cite{kitazawa 5}. \\
\ \ \ In the case where $(m,n)=(4,2)$ holds in this example, we cannot apply Proposition \ref{prop:4}. Moreover, we cannot apply Theorem \ref{thm:1} or \ref{thm:2}, either. On the other hand, we can apply Theorems \ref{thm:3} and \ref{thm:4}. 
 As a result on the diffeomorphism types of the source manifolds of round fold maps obtained by $P$-operations by $S^1$ to the original round fold map $f$, we have the following theorem. \\ 

\begin{Thm}
\label{thm:5}
Let $M$ be a connected sum of $l \in \mathbb{N}$ closed manifolds having the structure of an $S^2$-bundles over $S^2$. By a P-operation by $S^1$ to
 the map in Example \ref{ex:3}, for any cohomology class $\alpha \in H^2(M;\mathbb{Z})$ vanishing on all the cycles corresponding to the fibers of the bundles, we can construct
 a round fold map from any closed manifold $M(\alpha)$ having the
 structure of an oriented $S^1$-bundle over $M$ whose Euler class is $\alpha$ into ${\mathbb{R}}^2$. Furthermore, we have the followings.
\begin{enumerate}
\item $\alpha \in H^2(M;\mathbb{Z})$ is chosen to be primitive and if it is primitive, then $M(\alpha)$ is simply-connected.
\item If $M(\alpha)$ is simply-connected, then $\alpha \in H^2(M;\mathbb{Z})$ must be primitive and $M(\alpha)$ is diffeomorphic to a connected sum of $2l-2$ copies of $S^2 \times S^3$ {\rm (}we replace this by $S^5$ in the case where $l=1$ holds{\rm )} and a closed manifold having the
 structure of an $S^3$-bundle over $S^2$.
\item If $M$ is represented as a connected sum of finite copies of $S^2 \times S^2$ and $M(\alpha)$ is simply-connected, then $M(\alpha)$ is
 represented as a connected sum of $2l-1$ copies of $S^2 \times S^3$. 
\item If $M$ is represented as a connected sum of more than one closed manifolds having the structures of $S^2$-bundles over $S^2$ {\rm (}$l>1$ must hold{\rm )} and not
 represented as a connected sum of finite copies of $S^2 \times S^2$, then for any closed manifold represented as a connected sum of $2l-2$ copies of $S^2 \times S^3$ and a closed manifold having the
 structure of an $S^3$-bundle over $S^2$, we can choose $\alpha \in H^2(M;\mathbb{Z})$ so that $M(\alpha)$ is diffeomorphic to the manifold.   
\item If $M$ is a closed manifold having the structure of an $S^2$-bundle over $S^2$ and $M(\alpha)$ is
 simply-connected, then $M(\alpha)$ is diffeomorphic to $S^2 \times S^3$.
\end{enumerate}
\end{Thm}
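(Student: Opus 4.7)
The plan is to apply Theorem \ref{thm:3} (in its $n=2$ formulation) to construct the round fold map $f^\prime:M(\alpha)\to{\mathbb{R}}^2$, and then to read off the topology of $M(\alpha)$ from the Gysin sequence of the $S^1$-bundle $M(\alpha)\to M$ together with the Smale--Barden classification of simply-connected closed $5$-manifolds. First I would verify that the round fold map $f$ of Example \ref{ex:1} (\ref{ex:1.3}) with $(m,n)=(4,2)$ satisfies the hypotheses of Theorem \ref{thm:3}, as already recorded in Example \ref{ex:3}. The regular fiber over a vertex in the starting set is a disjoint union of $2$-spheres representing the fiber classes $F_i$ of the $S^2$-bundle summands, so the assumption that $\alpha$ vanishes on all fiber cycles matches the Euler-class condition on $i_{s\ast}(H_2(F_s;\mathbb{Z}))$ in Theorem \ref{thm:3}; and the same assumption forces $\alpha(F_s)=0$, so the restriction of the $S^1$-bundle to $f^{-1}({C_s}^\prime)\cong S^1\times F_s$ has Euler class zero and is therefore trivial.

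Next I would compute $\pi_1(M(\alpha))$ using the homotopy exact sequence $\pi_2(M)\to\pi_1(S^1)\to\pi_1(M(\alpha))\to\pi_1(M)=0$. Since $M$ is a simply-connected $4$-manifold, the Hurewicz map identifies $\pi_2(M)$ with $H_2(M)$, and the boundary map is $[S]\mapsto\langle\alpha,[S]\rangle$, so $\pi_1(M(\alpha))=\mathbb{Z}/d\mathbb{Z}$ where $d$ is the divisibility of $\alpha$ in $H^2(M;\mathbb{Z})$. This yields the equivalence of primitivity of $\alpha$ and simple-connectivity of $M(\alpha)$ asserted in (1)--(2). Primitive $\alpha$ vanishing on the fibers exist because the classes $F_1,\dots,F_l$ span only an $l$-dimensional summand of $H_2(M)=\mathbb{Z}^{2l}$ in the section/fiber basis of the connected sum.

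Then I would invoke the Gysin sequence $\cdots\to H^{k-2}(M)\xrightarrow{\cup\alpha}H^k(M)\to H^k(M(\alpha))\to H^{k-1}(M)\to\cdots$. Using $H^1(M)=H^3(M)=0$, this yields $H^2(M(\alpha))\cong H^2(M)/\langle\alpha\rangle\cong\mathbb{Z}^{2l-1}$ for primitive $\alpha$, and together with Poincar\'e duality shows that $H_2(M(\alpha))$ is torsion-free of rank $2l-1$. The tangent bundle of an oriented $S^1$-bundle splits as $TM(\alpha)\cong\pi^{\ast}TM\oplus\epsilon^1$, so $w_2(M(\alpha))=\pi^{\ast}w_2(M)$; by the mod-$2$ Gysin sequence this vanishes if and only if $w_2(M)\in\langle\alpha\bmod 2\rangle\subset H^2(M;\mathbb{Z}/2\mathbb{Z})$. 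The Smale--Barden classification then forces $M(\alpha)$ to be either $\#_{2l-1}(S^2\times S^3)$ (spin) or $(S^3\tilde{\times}S^2)\#\#_{2l-2}(S^2\times S^3)$ (non-spin), giving the statement in (2).

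Finally I would finish (3)--(5) by a case analysis driven by Wu's formula $w_2(M)\cdot x\equiv x\cdot x\pmod 2$ on the $4$-manifold $M$. In (3), $M$ is spin so $w_2(M)=0$ and $M(\alpha)$ is automatically spin, yielding $\#_{2l-1}(S^2\times S^3)$. In (5), primitivity forces $\alpha(\sigma_1)=\pm 1$ and $\alpha(F_1)=0$; a direct Wu-formula check in each of the two possible $S^2$-bundles over $S^2$ shows $\alpha\equiv w_2(M)\pmod 2$, whence $M(\alpha)=S^2\times S^3$. In (4), $w_2(M)\neq 0$ and $l\geq 2$ provide enough freedom on the coefficients $\alpha(\sigma_i)$ to exhibit primitive $\alpha$ realizing both $\alpha\equiv w_2(M)\pmod 2$ (spin) and $\alpha\not\equiv w_2(M)\pmod 2$ (non-spin). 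The main obstacle is precisely this last realization step: ensuring that the affine subspace of $\alpha$ vanishing on all $F_i$ simultaneously meets both mod-$2$ cosets of $\langle w_2(M)\rangle$ and contains a primitive representative in each, which requires an explicit combinatorial check on the intersection form of $M$.
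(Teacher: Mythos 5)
Your proposal is correct and follows the same overall skeleton as the paper (a P-operation theorem from section \ref{sec:3}, then the Gysin sequence of $M(\alpha) \rightarrow M$, then the classification of simply-connected $5$-manifolds), but several of your steps are genuinely different in execution, and in two places arguably cleaner. First, you invoke Theorem \ref{thm:3}; the paper's proof cites Theorem \ref{thm:2}, which by the paper's own Example \ref{ex:3} does not apply when $(m,n)=(4,2)$, so your citation is the one that actually works. Second, you obtain $\pi_1(M(\alpha)) \cong \mathbb{Z}/d\mathbb{Z}$ directly by identifying the boundary map $\pi_2(M) \rightarrow \pi_1(S^1)$ with evaluation of $\alpha$, whereas the paper only extracts cyclicity of $\pi_1$ from the homotopy sequence and then detects triviality of $H_1(M(\alpha);\mathbb{Z})$ through the Gysin sequence and Poincar\'e duality; both routes are valid, yours is shorter, the paper's stays closer to Duan--Liang's Lemma 3. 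Third, you re-derive the spin/non-spin dichotomy from $TM(\alpha) \cong \pi^{\ast}TM \oplus {\epsilon}^1$, the mod $2$ Gysin sequence, and Barden's classification of simply-connected $5$-manifolds with torsion-free $H_2$, where the paper quotes Proposition \ref{prop:5} as a black box; this buys you a self-contained argument at the cost of invoking Barden directly. Finally, for statement (4) your coefficient-wise analysis on $(\alpha(\sigma_1),\dots,\alpha(\sigma_l))$ is actually more robust than the paper's appeal to Example \ref{ex:4}, which presupposes that the connected sum contains both a trivial and a non-trivial summand, an assumption not made in statement (4). The "explicit combinatorial check" you defer is genuinely trivial and you should just do it: writing ${\epsilon}_i:=w_2(M)(\sigma_i)$, the choice $\alpha(\sigma_i)={\epsilon}_i$ is primitive (some ${\epsilon}_j=1$ by hypothesis) and gives $\alpha \equiv w_2 \bmod 2$, hence the spin case; since $l>1$, setting $\alpha(\sigma_j)=0$ at an index with ${\epsilon}_j=1$ and $\alpha(\sigma_k)=1$ at some other index gives a primitive class with $\alpha \not\equiv w_2 \bmod 2$, hence the non-spin case. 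With that one line filled in, your argument is complete.
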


\ \ \ For the proof of Theorem \ref{thm:5}, we need the following proposition, which is a main theorem of \cite{duan liang}. See also
 \cite{barden}, in which Barden classified closed and simply-connected $5$-dimensional manifolds completely. 

\begin{Prop}[\cite{duan liang}]
\label{prop:5}
Let $X$ be a closed and simply-connected manifold which may not be smooth of dimension $4$. Let $l>1$ be an integer and let $H^2(X;\mathbb{Z}) \cong {\mathbb{Z}}^{l}$ hold.
\begin{enumerate}
\item For any primitive element $\alpha \in H^2(X;\mathbb{Z})$, let $X(\alpha)$ be a manifold having the structure
 of an oriented $S^1$-bundle whose Euler class is $\alpha$. Then for any $\alpha$, $X(\alpha)$ is simply-connected and
 have a differentiable structure and one of the followings holds.
\begin{enumerate}
\item $X(\alpha)$ is diffeomorphic to a connected sum of $l-1$ copies of $S^2 \times S^3$  
\item $X(\alpha)$ is diffeomorphic to a connected sum of $l-2$ copies of $S^2 \times S^3$ {\rm (}we replace this by $S^5$ in the case where $l=2$ holds{\rm )} and $S^3 \tilde{\times}  S^2$.
\end{enumerate}  
\item Let $X$ be smooth manifold and let $\alpha \in H^2(X;\mathbb{Z})$ be primitive. A manifold $X(\alpha)$ having the structure
 of an oriented $S^1$-bundle whose Euler class is $\alpha$ is not diffeomorphic to
 a connected sum of $l-1$ copies of $S^2 \times S^3$ if and only if the followings hold.
\begin{enumerate}
\item The 2nd Stiefel-Whitney class $\beta$ of the tangent bundle $TX$ over $X$ does not vanish.
\item $\alpha$ is not equal to $\beta$ mod $2$. 
\end{enumerate}
\end{enumerate}
\end{Prop}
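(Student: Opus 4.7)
The plan is to run the Gysin sequence of the oriented $S^1$-bundle $\pi:X(\alpha)\to X$ with both $\mathbb{Z}$ and $\mathbb{Z}/2$ coefficients, combine it with the homotopy exact sequence of the fibration, and then invoke Barden's classification of closed simply-connected $5$-manifolds to finish.

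First, I would compute the integral cohomology of $X(\alpha)$. Since $X$ is closed, simply-connected and $4$-dimensional with $H^2(X;\mathbb{Z})\cong\mathbb{Z}^l$, the intersection form is unimodular by Poincaré duality, so primitivity of $\alpha$ implies that the cup-product map $\cup\alpha:H^2(X;\mathbb{Z})\to H^4(X;\mathbb{Z})=\mathbb{Z}$ is surjective and that $\cup\alpha:H^0(X;\mathbb{Z})\to H^2(X;\mathbb{Z})$ is injective with cokernel $\mathbb{Z}^{l-1}$ free. Threading these through
\[
\cdots\to H^{k}(X;\mathbb{Z})\xrightarrow{\cup\alpha}H^{k+2}(X;\mathbb{Z})\xrightarrow{\pi^{*}}H^{k+2}(X(\alpha);\mathbb{Z})\xrightarrow{\pi_{*}}H^{k+1}(X;\mathbb{Z})\to\cdots
\]
then yields $H^{1}(X(\alpha);\mathbb{Z})=0$, $H^{2}(X(\alpha);\mathbb{Z})\cong\mathbb{Z}^{l-1}$, $H^{3}(X(\alpha);\mathbb{Z})\cong\mathbb{Z}^{l-1}$, $H^{4}(X(\alpha);\mathbb{Z})=0$, $H^{5}(X(\alpha);\mathbb{Z})=\mathbb{Z}$. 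Simple connectivity of $X(\alpha)$ then follows from the homotopy exact sequence $\pi_{2}(X)\to\pi_{1}(S^{1})\to\pi_{1}(X(\alpha))\to\pi_{1}(X)=0$, because Hurewicz identifies the connecting map with evaluation of $\alpha$ on $\pi_{2}(X)\cong H_{2}(X;\mathbb{Z})$, which is surjective by primitivity.

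Next, I would feed these data into Barden's classification: a closed simply-connected topological $5$-manifold with torsion-free $H_{2}$ admits a unique smooth structure (the Kirby-Siebenmann obstruction lives in $H^{4}(X(\alpha);\mathbb{Z}/2)=0$), and is determined up to diffeomorphism by the rank of $H_{2}$ together with the second Stiefel-Whitney class $w_{2}\in H^{2}(X(\alpha);\mathbb{Z}/2)$. With rank $l-1$, the two possibilities listed in (1) exhaust Barden's list: $\#_{l-1}(S^{2}\times S^{3})$ when $w_{2}=0$, and $(S^{3}\tilde{\times}S^{2})\#_{l-2}(S^{2}\times S^{3})$ when $w_{2}\ne 0$. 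This proves (1).

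For (2), the key identification is that the vertical tangent bundle of the oriented circle bundle is trivial, hence $TX(\alpha)\cong\pi^{*}TX\oplus\underline{\mathbb{R}}$, so $w_{2}(X(\alpha))=\pi^{*}(\beta)$. I would then run the Gysin sequence once more with $\mathbb{Z}/2$ coefficients,
\[
H^{0}(X;\mathbb{Z}/2)\xrightarrow{\cup\bar{\alpha}}H^{2}(X;\mathbb{Z}/2)\xrightarrow{\pi^{*}}H^{2}(X(\alpha);\mathbb{Z}/2),
\]
where $\bar{\alpha}$ is the mod-$2$ reduction of $\alpha$. Exactness gives $\ker\pi^{*}=\{0,\bar{\alpha}\}$, so $\pi^{*}\beta=0$ exactly when $\beta=0$ or $\beta\equiv\alpha\pmod 2$; equivalently, $w_{2}(X(\alpha))\ne0$ iff $\beta\ne0$ and $\alpha\not\equiv\beta\pmod 2$. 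Combined with (1) this gives the non-trivial case and finishes (2). The only real obstacle is knowing the statement of Barden's theorem in the required generality (together with the fact that the relevant topological manifolds are smoothable); everything else is Gysin-sequence bookkeeping enabled by primitivity of $\alpha$.
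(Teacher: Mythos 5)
The paper offers no proof of this proposition: it is imported as a black box from \cite{duan liang} (with \cite{barden} cited for the classification it rests on), so there is no internal argument to compare yours against. Your reconstruction is, as far as I can tell, essentially the argument of Duan and Liang, and it is correct: the integral Gysin sequence together with primitivity of $\alpha$ (unimodularity of the intersection form gives surjectivity of $\cup\alpha:H^2(X;\mathbb{Z})\rightarrow H^4(X;\mathbb{Z})$, and primitivity gives a free cokernel for $\cup\alpha:H^0\rightarrow H^2$) yields $H^1(X(\alpha);\mathbb{Z})=0$ and $H_2(X(\alpha);\mathbb{Z})\cong\mathbb{Z}^{l-1}$ torsion-free; the homotopy sequence of the fibration, with the connecting map identified with evaluation of $\alpha$ on $\pi_2(X)\cong H_2(X;\mathbb{Z})$, gives simple connectivity; Barden's classification for torsion-free $H_2$ reduces the diffeomorphism type to the vanishing of $w_2(X(\alpha))=\pi^{\ast}\beta$; and the mod-$2$ Gysin sequence identifies $\ker\pi^{\ast}$ on $H^2(\cdot;\mathbb{Z}/2\mathbb{Z})$ with $\{0,\bar{\alpha}\}$, which is exactly the condition in statement (2).

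One step is stated more strongly than you need, and more strongly than your justification supports: you claim $X(\alpha)$ admits a \emph{unique} smooth structure because the Kirby--Siebenmann class lives in $H^4(X(\alpha);\mathbb{Z}/2\mathbb{Z})=0$. Vanishing of that group gives \emph{existence} of a PL, hence smooth, structure in dimension $5$; uniqueness is not immediate from it, since concordance classes of PL structures form a torsor over $H^3(X(\alpha);\mathbb{Z}/2\mathbb{Z})\cong(\mathbb{Z}/2\mathbb{Z})^{l-1}\neq\{0\}$. Fortunately uniqueness is not required: for statement (1) any smooth structure places $X(\alpha)$ on Barden's list with second homology free of rank $l-1$, so the stated dichotomy holds for every smoothing; and in statement (2) the base $X$ is smooth, so $X(\alpha)$ carries a preferred smooth structure for which $w_2(X(\alpha))=\pi^{\ast}\beta$ is meaningful. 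With that caveat the proposal is a sound, self-contained proof of a result the paper merely cites.
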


\begin{Ex}
\label{ex:4}
Let $X$ be a connected sum of a finite number of closed smooth manifolds having the structures of $S^2$-bundles over $S^2$. \\
\ \ \ Assume that all the bundles appeared in the connected sum are trivial. Then the 2nd
 Stiefel-Whitney class $\beta$ of the tangent bundle $TX$ over $X$ vanishes and $X(\alpha)$ in Proposition \ref{prop:5} is diffeomorphic to
 a connected sum of finite copies of $S^2 \times S^3$ by the proposition. \\  
\ \ \ Assume that there exist a bundle which is not trivial and a trivial bundle in the connected sum. Then the 2nd
 Stiefel-Whitney class $\beta$ of the tangent bundle $TX$ over $X$ vanishes on cycles corresponding to fibers of all the $S^2$-bundles in this family. On the
 other hand, the class does not vanish on the cycle corresponding to the base space of an $S^2$-bundle in the family which is not trivial. We can take $\alpha \in H^2(X;\mathbb{Z})$ so
 that $\alpha$ vanishes on cycles corresponding to fibers of all the $S^2$-bundles appeared in the connected sum and that the class does not vanish only on the cycle corresponding to the base
 space of just one trivial $S^2$-bundle in the family. In this case, $\alpha$ is not equal to $\beta$ mod $2$. Then $X(\alpha)$ in Proposition \ref{prop:5} is not diffeomorphic to
 a connected sum of finite copies of $S^2 \times S^3$ and is diffeomorphic to a connected sum of finite copies of $S^2 \times S^3$ and $S^3 \tilde{\times} S^2$ by the proposition. We can also take $\alpha \in H^2(X;\mathbb{Z})$ so
 that $\alpha$ is equal to $\beta$ mod $2$. Then, $X(\alpha)$ in Proposition \ref{prop:5} is diffeomorphic to
 a connected sum of finite copies of $S^2 \times S^3$.    
\end{Ex}

\begin{proof}[Proof of Theorem \ref{thm:5}]
A round fold map into ${\mathbb{R}}^2$ as in Example \ref{ex:1} (\ref{ex:1.3}) exists on $M$. We can apply Theorem \ref{thm:2} to this map. This
 completes the proof of the former part. We prove five statements in the latter part. \\
\ \ \ We can choose $\alpha \in H^2(M;\mathbb{Z})$ so that it vanishes on all the cycles corresponding to the fibers of the $C^{\infty}$ $S^2$-bundles over $S^2$ and on all but one cycles
 corresponding to the base spaces of the $C^{\infty}$ $S^2$-bundles over $S^2$ and that the value is $1$ on the cycle corresponding to the base space of the remained $C^{\infty}$ $S^2$-bundle over $S^2$. Then $\alpha$ is primitive and we obtain a round fold map
 from $M(\alpha)$ into ${\mathbb{R}}^2$ by a $P$-operation by $S^1$. By Proposition \ref{prop:5}, we have the first statement. \\
\ \ \ For the proof of the second statement, we review and apply the method used in the proof of Lemma 3 of \cite{duan liang}, which is a key lemma to Theorem 2 of \cite{duan liang} or Proposition \ref{prop:5} of the present paper. \\
\ \ \ From the homotopy sequence of the bundle $M(\alpha)$ over $M$, ${\pi}_1(M(\alpha))$ is cyclic. It follows that ${\pi}_1(M(\alpha)) \cong H_1(M(\alpha);\mathbb{Z})$.
Substituting $H^k(M;\mathbb{Z}) \cong \{0\}$ for any odd integer $k$ and any integer $k$ not smaller than $5$ in the Gysin sequence of
the bundle $M(\alpha)$ over $M$ yields the following two exact sequences where $p:M(\alpha) \rightarrow M$ is the projection of the bundle and where $\bigcup \alpha$ denotes
 the operation of taking cup products with $\alpha$.

$$
\begin{CD}
\{0\} @> p^{\ast}  >> H^3(M(\alpha);\mathbb{Z}) @> >>  H^2(M;\mathbb{Z}) @> \bigcup \alpha  >> \\ H^4(M;\mathbb{Z}) \cong \mathbb{Z} @> p^{\ast} >> H^4(M(\alpha);\mathbb{Z}) @> >> \{0\} \\
\end{CD}
$$

$$
\begin{CD}
\{0\} @> p^{\ast}  >> H^5(M(\alpha);\mathbb{Z}) @> >>  H^4(M;\mathbb{Z}) \cong \mathbb{Z} @> \bigcup \alpha  >> \{0\} 
\end{CD}
$$

\ \ \ By the second sequence, we have $H^5(M(\alpha);\mathbb{Z}) \cong \mathbb{Z}$. The homomorphism in the first sequence $\bigcup \alpha:H^2(M;\mathbb{Z}) \rightarrow H^4(M;\mathbb{Z})$ is
 surjective if and only if $\alpha$ is primitive. Furthermore, $\bigcup \alpha:H^2(M;\mathbb{Z}) \rightarrow H^4(M;\mathbb{Z})$ is surjective if
 and only if $H^4(M(\alpha);\mathbb{Z}) \cong H_1(M(\alpha);\mathbb{Z}) \cong {\pi}_1(M(\alpha)) \cong \{0\}$ holds (we can
 apply Poincare duality theorem since $H^5(M(\alpha);\mathbb{Z}) \cong \mathbb{Z}$ holds). Furthermore, by the first sequence, $H^3(M(\alpha);\mathbb{Z}) \cong {\mathbb{Z}}^{2l-1}$ holds if $\alpha$ is primitive. By virtue of Poincare duality
 theorem, $H^3(M(\alpha);\mathbb{Z}) \cong H_2(M(\alpha);\mathbb{Z})$ always holds. Thus, by the theory of \cite{barden} or Proposition \ref{prop:5}, the second statement
 of the five statements holds. \\
\ \ \ By the second statement of the five statements and Example \ref{ex:4} (or Proposition \ref{prop:5}), the third and fourth statements of the five statements hold. \\
\ \ \ The 2nd Stiefel-Whitney class of the tangent bundle of $S^2 \tilde{\times} S^2$ vanishes on the cycle corresponding to the fiber and does not vanish on one corresponding
 to the base space. So, if in the situation of the last statement of the five statements, $M$ is diffeomorphic to $S^2 \tilde{\times} S^2$ and $M(\alpha)$ is
 simply-connected, then $\alpha$ must be primitive and as a result it equals the 2nd Stiefel-Whitney class of $S^2 \tilde{\times} S^2$ mod $2$. Furthermore, by
 Proposition \ref{prop:5}, $M(\alpha)$ is diffeomorphic to $S^2 \times S^3$ under this condition and the same fact follows by the same proposition if $M=S^2 \times S^2$. Thus this
 completes the proof of the last one of the five statements. \\
\ \ \ This completes the proof.
\end{proof}

Note that also in \cite{kitazawa}, \cite{kitazawa 2} and \cite{kitazawa 4}, round fold maps from the simply-connected $5$-dimensional manifolds represented as connected sums of manifolds having the
 structures of $S^3$-bundles over $S^2$ into the plane are obtained. Regular fibers of
 these maps are disjoint unions of finite copies of $S^3$. Theorem \ref{thm:5} shown above gives new explicit examples of round fold maps
 from such simply-connected $5$-dimensional manifolds into the plane. 

\subsection{Applications of Propositions, Theorems and Corollaries mentioned in section \ref{sec:3} to round fold maps in Example \ref{ex:1} (\ref{ex:1.4})} 
We consider a round fold map $f$ from a closed $m$-dimensional manifold $M$ into ${\mathbb{R}}^n$ ($m \geq n \geq 2$) in Example \ref{ex:1} (\ref{ex:1.4}).

\subsubsection{The case where $n \geq 4$ holds.}
Let $n \geq 4$. Then, for any manifold having
 the structure of an $S^1$-bundle over the manifold $M$, we can obtain a new round fold map by a P-operation by $S^1$ on the map $f$. If $m-n \geq 2$, then the resulting
 manifold is always diffeomorphic to the product of $S^1$ and the original source manifold $M$. If $m-n=1$, then the resulting manifold
 is homeomorphic to the product of $S^{n-1}$ and a $3$-dimensional manifold having
 the structure of an $S^1$-bundle over $S^2$ and conversely, such a manifold is realized as the resulting source manifold of a round fold map obtained by
 P-operation by $S^1$ on the map $f$. If $m=n$, then the resulting manifold is homeomorphic to the
 product of $S^{n-1}$ and the $2$-dimensional torus or the Klein Bottle and conversely, these two manifolds are realized as the resulting source
 manifolds of round fold maps obtained by P-operations by $S^1$ to the original round fold map $f$. 
\subsubsection{The case where $n=3$ holds.}
Let $n=3$. In this case, for any manifold having the structure of an $S^1$-bundle over the source manifold $M$, the restriction of the bundle to the inverse image of any $2$-dimensional
 sphere smoothly embedded in the interior of the image of the map is orientable. If the bundle obtained by
 the restriction is oriented and the Euler class vanishes, then, from Theorem \ref{thm:1} or Theorem \ref{thm:2}, on the manifold having the structure
 of an $S^1$-bundle over $M$, we obtain a round fold map by a P-operation by $S^1$ to the original map $f$. \\
\ \ \ We consider the resulting source manifold of the round fold map given by a P-operation in the case where $n=3$ holds. If $m-n=m-3 \geq 2$ or $m \geq 5$ holds, then the resulting manifold
 must be diffeomorphic to the product of $S^1$ and the original source manifold. If $m-n=m-3=1$ or $m=4$ holds, then the resulting manifold must
 be diffeomorphic to the product of $S^2$ and a $3$-dimensional manifold having
 the structure of an $S^1$-bundle over $S^2$ and conversely, we obtain any such manifold as the source manifold of the resulting round fold map given by a
 P-operation by $S^1$ to the original round fold map $f$. If $m=n=3$ holds, then the resulting manifold must be diffeomorphic to the product of $S^2$ and the $2$-dimensional torus or the Klein Bottle and conversely, these manifolds
 are obtained by P-operations by $S^1$
 to the original map $f$ as the source manifolds of resulting round fold maps. 
\subsubsection{The case where $n=2$ holds.}
Let $n=2$. In this case, for a manifold having the structure of an $S^1$-bundle over the source manifold of such
 a round fold map, if the restriction of the bundle to the inverse image of any circle smoothly embedded in the
 regular value set of the map is orientable, then, by Theorem \ref{thm:3} or \ref{thm:4}, on the original manifold having the structure
 of an $S^1$-bundle, we obtain a round fold map by a P-operation by $S^1$ to the original round fold map. \\
\ \ \ We consider the resulting source manifolds given by these P-operations in the case where $n=2$ holds. If $m-n=m-2 \geq 2$ or $m \geq 4$ holds, then the resulting manifold must
 be diffeomorphic to the product of $S^1$ and the original source manifold. If $m-n=m-2=1$ or $m=3$ holds, then the resulting manifold
 must be diffeomorphic to the product of $S^1$ and a $3$-dimensional manifold having
 the structure of an $S^1$-bundle over $S^2$ and conversely, any such manifold is obtained by a
 P-operation by $S^1$ to the original round fold map. \\
\ \ \ Last, we consider the case where $(m,n)=(2,2)$ holds. On any manifold having the structure of an oriented $S^1$-bundle over the oriented torus $S^1 \times S^1$, we
 obtain a round fold map into ${\mathbb{R}}^2$ by a P-operation by $S^1$ to the original map. The resulting source manifolds are
 diffeomorphic (homeomorphic) if and only if the absolute values of the Euler numbers of the bundles coincide. Of course, for any integer $k$, we
 can obtain an oriented $S^1$-bundle whose Euler number is $k$ in this case. Note that by P-operations by $S^1$ to the original map, we obtain a family $\{M_k\}_{k \in \mathbb{Z}}$ of
 3-dimensional oriented closed and connected manifolds and a family of round fold
 maps $\{f_k:M_k \rightarrow \mathbb{R}^2\}_{k \in \mathbb{Z}}$ such that the followings hold.
\begin{enumerate}
\item $M_k$ has the structure of an oriented $S^1$-bundle over the oriented $2$-dimensional torus $S^1 \times S^1$ with Euler number $k \in \mathbb{Z}$.
\item $f_k$ is $C^{\infty}$ L-trivial.
\item The inverse image of the axis of $f_k$ is the $2$-dimensional torus $S^1 \times S^1$. 
\end{enumerate}
Furthermore, $f_k$ is not $C^{\infty}$ S-trivial for $k \neq 0$, which follows from Proposition \ref{prop:2} (\ref{prop:2.4}). \\
\ \ \ We can also obtain a round fold map from the product of $S^{n-1}$ and the Klein Bottle by a P-operation by $S^1$. 
\subsection{Further applications}
\subsubsection{Case 1}
Let $m,n \in \mathbb{N}$ and $m>n \geq 2$. Let $M$ be an $m$-dimensional closed manifold admitting
 a round fold map $f:M \rightarrow {\mathbb{R}}^n$ such that the followings hold.
\begin{enumerate}
\item $f$ is $C^{\infty}$ L-trivial.
\item The image $f(M)$ is diffeomorphic to $S^{n-1} \times [-1,1]$.
\item The inverse image of an axis of the map $f$ is homeomorphic to $S^1 \times S^{m-n}$
\item $S(f)$ consists of four connected components.
\item The regular fiber of a point in a connected component of the set ${\mathbb{R}^n}-f(S(f))$ consists of the disjoint union of two spheres.
\end{enumerate} 

We easily obtain such a map by a trivial spinning construction mentioned before Example \ref{ex:1}. The source manifold is diffeomorphic to the product of $S^{n-1}$ and a manifold homeomorphic
 to $S^1 \times S^{m-n}$. Moreover, for each connected component $C$ of $f(S(f))$, there exists a small closed tubular neighborhood $N(C)$ such that $f^{-1}(N(C))$ has the structures
 of isomorphic trivial bundles whose fibers are diffeomorphic to the standard closed disc $D^{m-n+1}$ or homeomorphic to
 the standard {\rm (}$m-n+1${\rm )}-dimensional sphere $S^{m-n+1}$ with the interior of
 a union of disjoint three {\rm (}$m-n+1${\rm )}-dimensional standard
 closed discs removed as in Definition \ref{def:2}. \\
\\
\ \ \ We apply Propositions and Theorems introduced in section \ref{sec:3}. \\
\ \ \ Let $n \geq 4$. In the case where $m-n \geq 3$ holds, any manifold having the structure
 of an orientable $S^1$-bundle over $M$ is diffeomorphic to $M \times S^1$ and there exists a manifold having the structure
 of an $S^1$-bundle over $M$ diffeomorphic to the product of $S^{n-1}$ and a manifold homeomorphic to the product of the Klein Bottle and $S^{m-n}$ too. In this
 case, on any manifold having the structure
 of an $S^1$-bundle over $M$, we obtain a round fold map by a P-operation by $S^1$ to the map $f$. In the case where $m-n=2$ holds, on
 any manifold having the structure of an orientable $S^1$-bundle over $M$ such that the restriction of the bundle to
 a regular fiber of the map $f$ with two connected components is trivial, we can construct a new round fold map by
 a P-operation by $S^1$ to the map $f$ and moreover, the bundle is a trivial $S^1$-bundle. In this case, there exists a manifold having the structure
 of an $S^1$-bundle over $M$ diffeomorphic to the product of $S^{n-1}$ and a manifold homeomorphic to the product of the Klein Bottle and $S^2$ too and on
 the manifold, we can construct a new round fold map similarly. In the case where $m-n=1$ holds, any
 manifold having the structure of an orientable $S^1$-bundle over $M$ is diffeomorphic to the product of $M$ and a manifold having the
 structure of an orientable $S^1$-bundle over the $2$-dimensional torus and on
 any manifold having the structure of an orientable $S^1$-bundle over $M$, we can construct a new round
 fold map by a P-operation by $S^1$ to the map $f$. There exists a manifold having the structure
 of an $S^1$-bundle over $M$ diffeomorphic to the product of $S^{n-1}$ and a manifold diffeomorphic to the product of the Klein Bottle and $S^1$ and on
 the manifold, we can construct a new round fold map similarly. \\
\ \ \ Let $n=3$. In the case where $m-n=m-3 \geq 3$ ($m \geq 6$) holds, on any manifold having the structure of an orientable $S^1$-bundle over $M$ such that the restriction of the bundle to
 any $2$-dimensional sphere in the set ${\mathbb{R}}^3-f(S(f))$ is trivial, we can construct a new round fold map by
 a P-operation by $S^1$ to the map $f$ and moreover, the bundle is a trivial $S^1$-bundle. There exists a manifold having the structure
 of an $S^1$-bundle over $M$ diffeomorphic to the product of $S^{n-1}=S^2$ and a manifold homeomorphic to the product of the Klein Bottle and $S^{m-n}$ and on
 the manifold, we can construct a new round fold map similarly. In the case where $m-n=m-3=2$ ($m=5$) holds, on any manifold having the structure of an orientable $S^1$-bundle over $M$ such that the restriction of the bundle to
 any $2$-dimensional sphere in the set ${\mathbb{R}}^n-f(S(f))$ and a regular fiber of the map $f$ with
 two connected components are trivial, we can construct a new round fold map by
 a P-operation by $S^1$ to the map $f$ and moreover, the bundle is a trivial $S^1$-bundle. There exists a manifold having the structure
 of an $S^1$-bundle over $M$ diffeomorphic to the product of $S^{n-1}=S^2$ and a manifold homeomorphic to the product of the Klein Bottle and $S^{m-n}=S^3$ and on
 the manifold, we can construct a new round fold map similarly. In the case where $m-n=m-3=1$ ($m=4$) holds, any manifold having the structure of an orientable $S^1$-bundle over $M$ such that the restriction of the bundle to
 any $2$-dimensional sphere in the set ${\mathbb{R}}^3-f(S(f))$ is trivial is diffeomorphic
 to the product of $M$ and a manifold having the
 structure of an orientable $S^1$-bundle over the $2$-dimensional torus and on any such manifold,
 we can construct a new round fold map by a P-operation by $S^1$ to the map $f$. There
 exists a manifold having the structure
 of an $S^1$-bundle over $M$ diffeomorphic to the product of $S^{n-1}=S^2$ and a manifold diffeomorphic to the product of the Klein Bottle and $S^1$ and on
 the manifold, we can construct a new round fold map similarly. \\ 
\ \ \ Let $n=2$. For simplicity, we only consider the case where $m-n=m-2 \geq 3$ ($m \geq 5$) holds. \\
\ \ \ Any manifold having
 the structure of an orientable $S^1$-bundle over $M$ is homeomorphic to the product of a $3$-dimensional manifold having the structure of an orientable $S^1$-bundle over the $2$-dimensional torus and $S^{m-n}$ and for
 each product of a $3$-dimensional manifold having the structure of an orientable $S^1$-bundle over the $2$-dimensional torus and $S^{m-n}=S^{m-2}$, there exists a
 manifold having the structure of an orientable $S^1$-bundle over $M$ homeomorphic to the product. Furthermore, by Proposition \ref{prop:2} (\ref{prop:2.4}), the 1st {\rm (}co{\rm )}homology group of a $3$-dimensional manifold having the structure of an
 oriented $S^1$-bundle over the $2$-dimensional torus with Euler number $k$ is isomorphic
 to $\mathbb{Z} \oplus \mathbb{Z} \oplus \mathbb{Z}/|k|\mathbb{Z}$. From the discussion here, we have the following theorem. 

\begin{Thm}
\label{thm:6}
By P-operations by $S^1$ to the map $f:M \rightarrow {\mathbb{R}}^2$ here, we have a family
 of {\rm (}$m+1${\rm )}-dimensional oriented manifolds $\{M_k\}_{k \in \mathbb{Z}}$ and
 round fold maps $\{f_k:M_k \rightarrow {\mathbb{R}}^2\}_{k \in \mathbb{Z}}$. Furthermore, the followings hold.
\begin{enumerate}
\item By any P-operation by $S^1$ to the map $f$ such that the resulting $S^1$-bundle over $M$ is orientable, the resulting source manifold of the resulting map is diffeomorphic to a manifold belonging to the family $\{M_k\}$. 
\item $M_k$ is homeomorphic to the product of a $3$-dimensional manifold having the structure of an oriented $S^1$-bundle
 over the $2$-dimensional oriented torus $S^1 \times S^1$ with Euler number $k$ and $S^{m-n}=S^{m-2}$  
\item The inverse image of an axis of $f_k$ is homeomorphic to $S^1 \times S^{m-2} \times S^1$ 
\item $f_k$ is $C^{\infty}$ L-trivial.
\item $f_k$ is not $C^{\infty}$ S-trivial if $k$ is not $0$. 
\end{enumerate}
\end{Thm}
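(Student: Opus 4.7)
The plan is to apply Theorem~\ref{thm:3} (case $n=2$) to $f$ to produce the family, to classify the resulting source manifolds via Proposition~\ref{prop:2}, and to establish non-S-triviality by a first-homology obstruction. First I would verify the hypotheses of Theorem~\ref{thm:3}: the normal fibers $F_C$ are finite unions of $(m-1)$-discs or $(m-1)$-spheres with three open discs removed, so $H_1(F_C;\mathbb{Z})$ is torsion-free (using $m\geq 5$); the required starting set and the start-equivalence of the $(H_2,\mathbb{Z})$- and $(H_1,\mathbb{Z})$-graphs can be checked by direct inspection, much as in Example~\ref{ex:2}; and the triviality of an orientable $S^1$-bundle over each $f^{-1}({C_s}^{\prime})$ is automatic, because $M\cong T^2\times S^{m-2}$, so $f^{-1}({C_s}^{\prime})$ sits inside $M$ in a way that makes the pulled-back Euler class vanish (using $H^2(S^1\times S^{m-2};\mathbb{Z})=0$ when $m-2\geq 3$).

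Next, by K\"unneth and Proposition~\ref{prop:2}, orientable $S^1$-bundles over $M$ are classified by $H^2(M;\mathbb{Z})\cong H^2(T^2;\mathbb{Z})\cong\mathbb{Z}$, which yields the family $\{M_k\}_{k\in\mathbb{Z}}$ and the maps $\{f_k\}_{k\in\mathbb{Z}}$; since the Euler class is pulled back from the $T^2$ factor, the total space is homeomorphic to $N_k\times S^{m-2}$, where $N_k$ denotes the oriented $S^1$-bundle over $T^2$ of Euler number $k$. This gives property~(2), and exhaustiveness of the construction gives~(1). Property~(4) is part of the conclusion of Theorem~\ref{thm:3}. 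For property~(3), the P-operation construction in the proof of Proposition~\ref{prop:1} shows that $(f_k)^{-1}(L)$ is the restriction of the $S^1$-bundle $M_k\to M$ to $f^{-1}(L)\cong S^1\times S^{m-2}$; since $H^2(S^1\times S^{m-2};\mathbb{Z})=0$, this restriction is again a trivial bundle, so $(f_k)^{-1}(L)\cong S^1\times S^{m-2}\times S^1$.

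Finally, for property~(5) I would argue by contradiction: assume $f_k$ is $C^{\infty}$ S-trivial with $k\neq 0$. Because $f_k(M_k)\cong S^1\times [-1,1]$ misses the origin, $(f_k)^{-1}(\mathrm{Int}\,P)=\emptyset$, so $M_k$ itself is the total space of the surrounding bundle over $\partial P=S^1$ with fiber $(f_k)^{-1}(L)$. S-triviality would then force $M_k\cong S^1\times S^1\times S^{m-2}\times S^1\cong T^3\times S^{m-2}$, whose first homology is $\mathbb{Z}^3$ by K\"unneth. On the other hand Proposition~\ref{prop:2}(\ref{prop:2.4}) together with K\"unneth gives $H_1(M_k;\mathbb{Z})\cong H_1(N_k;\mathbb{Z})\cong\mathbb{Z}^2\oplus\mathbb{Z}/|k|\mathbb{Z}$, which disagrees with $\mathbb{Z}^3$ when $k\neq 0$. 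The main obstacle is property~(3): producing the explicit identification of $(f_k)^{-1}(L)$ requires tracing the local trivializations $\Phi_k$ used in the P-operation construction across the singular value spheres that the axis crosses, and then combining this with the vanishing of $H^2(S^1\times S^{m-2};\mathbb{Z})$ to rule out a twisted $S^1$-bundle structure along the axis; once this is in place, properties (1), (2), (4) are classification-theoretic bookkeeping and (5) is the short homology computation sketched above.
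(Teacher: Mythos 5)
Your proposal is correct and follows essentially the same route as the paper: the paper's (very terse) justification consists of the discussion immediately preceding the theorem, namely applying the P-operation results of Section 3 (whose hypotheses are trivially satisfied here since the normal fibers are discs or thrice-punctured $(m-1)$-spheres with vanishing $H^1$ and $H^2$), classifying orientable $S^1$-bundles over $M$ by $H^2(M;\mathbb{Z})\cong\mathbb{Z}$ so that the total spaces are homeomorphic to $N_k\times S^{m-2}$, and using Proposition \ref{prop:2} (\ref{prop:2.4}) to get $H_1(M_k;\mathbb{Z})\cong\mathbb{Z}^2\oplus\mathbb{Z}/|k|\mathbb{Z}$, which rules out $C^{\infty}$ S-triviality exactly by the comparison with $H_1(\partial P\times f_k^{-1}(L);\mathbb{Z})\cong\mathbb{Z}^3$ that you give. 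Your additional care with property (3) (identifying $f_k^{-1}(L)$ as the restriction of the circle bundle to $f^{-1}(L)$ and invoking $H^2(S^1\times S^{m-2};\mathbb{Z})=0$) is a correct filling-in of a step the paper leaves implicit.
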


 In this case, there exists a manifold having the structure
 of an $S^1$-bundle over $M$ diffeomorphic to the product of $S^{n-1}=S^1$ and a manifold homeomorphic to
 the product of the Klein Bottle and $S^{m-n}=S^{m-2}$ too and on this manifold, we can construct a new round
 fold map similarly. 

\subsubsection{Case 2}
Let $m,n \in \mathbb{N}$ and $m>n \geq 2$. Let $M$ be an $m$-dimensional closed manifold admitting
 a round fold map $f:M \rightarrow {\mathbb{R}}^n$ such that the followings hold.
\begin{enumerate}
\item $f$ is $C^{\infty}$ L-trivial.
\item The image $f(M)$ is diffeomorphic to $D^n$.
\item The inverse image of an axis of the map $f$ is homeomorphic to $S^1 \times S^{m-n}$ with the interior of an ($m-n+1$)-dimensional
 standard closed disc removed.
\item $S(f)$ consists of three connected components.
\item The regular fiber of a point in a connected component of the set ${\mathbb{R}^n}-f(S(f))$ consists of the disjoint union of two spheres.
\end{enumerate}

We easily obtain such a map by a trivial spinning construction mentioned before Example \ref{ex:1}. For a proper core $P$ of $f$, the manifold $f^{-1}({\mathbb{R}}^n-{\rm Int} P)$ is diffeomorphic
 to the product of $S^{n-1}$ and a manifold homeomorphic
 to $S^1 \times S^{m-n}$ with the interior of an ($m-n+1$)-dimensional standard closed disc removed. Moreover, for each connected component $C$ of $f(S(f))$, there exists a small closed tubular neighborhood $N(C)$ such that $f^{-1}(N(C))$ has the structures
 of isomorphic trivial bundles whose fibers are diffeomorphic to the standard closed disc $D^{m-n+1}$ or homeomorphic to
 the standard {\rm (}$m-n+1${\rm )}-dimensional sphere $S^{m-n+1}$ with the interior of
 a union of disjoint three {\rm (}$m-n+1${\rm )}-dimensional standard
 closed discs removed as in Definition \ref{def:2}. \\
\ \ \ We consider the case where $m-n \geq 3$ holds for simplicity. \\
\ \ \ Let $n \geq 3$ hold. In this case, the 2nd homology groups $H_2(M;\mathbb{Z})$ and $H_2(f^{-1}({\mathbb{R}}^n-{\rm Int} P);\mathbb{Z})$ and the
 2nd cohomology groups $H^2(M;\mathbb{Z})$ and $H^2(f^{-1}({\mathbb{R}}^n-{\rm Int} P);\mathbb{Z})$ are zero and any orientable $S^1$-bundle over $M$ is trivial. We can construct a round fold map
 from $M \times S^1$ into ${\mathbb{R}}^n$ by a P-operation by $S^1$ to $f$. In this case, according to a discussion
 similar to that of Case 1, there exists a manifold having the
 structure of an $S^1$-bundle over $M$ which is not orientable admitting a round fold map into ${\mathbb{R}}^n$ obtained by a P-operation by $S^1$ to $f$. \\  
\ \ \ Let $n=2$ hold. In this case, the 2nd homology groups $H_2(M;\mathbb{Z})$ and $H_2(f^{-1}({\mathbb{R}}^n-{\rm Int} P);\mathbb{Z})$ and the
 2nd cohomology groups $H^2(M;\mathbb{Z})$ and $H^2(f^{-1}({\mathbb{R}}^n-{\rm Int} P);\mathbb{Z})$ are all $\mathbb{Z}$. Moreover, the
 homomorphisms between the 2nd
 homology and cohomology groups induced by the inclusions of $f^{-1}({\mathbb{R}}^n-{\rm Int} P)$ into $M$ are
 isomorphisms. Let $\nu \in H^2(M;\mathbb{Z}) \cong \mathbb{Z}$ be a generator of the group
 and ${\nu}^{\prime} \in H_2(f^{-1}({\mathbb{R}}^n-{\rm Int} P);\mathbb{Z})$ be the
 generator of the group defined as the inverse image of $\nu$ of the isomorphism induced by the natural inclusion. Let $M_k$ be an oriented
 manifold having the structure of an oriented $S^1$-bundle over $M$ whose Euler class is $k\nu$. The
 restriction of the bundle $M_k$ to $f^{-1}({\mathbb{R}}^n-{\rm Int} P)$ is an oriented $S^1$-bundle whose Euler
 class is $k {\nu}^{\prime}$. Since we
 have $H_1(f^{-1}({\mathbb{R}}^n-{\rm Int} P);\mathbb{Z}) \cong \mathbb{Z} \oplus \mathbb{Z}$, by Proposition \ref{prop:2} and
 the homeomorphism type of the manifold $f^{-1}({\mathbb{R}}^n-{\rm Int} P)$, the 1st homology group of
 the total space of this obtained bundle whose coefficient
 ring is $\mathbb{Z}$ is isomorphic to $\mathbb{Z} \oplus \mathbb{Z} \oplus (\mathbb{Z}/|k| \mathbb{Z})$. Thus, the 1st homology
 group $H_1(M_k;\mathbb{Z})$ is isomorphic to $\mathbb{Z} \oplus (\mathbb{Z}/|k| \mathbb{Z})$. From this observation, we may apply some
 of our Propositions and Thorems in section \ref{sec:3} and have the following theorem.  

\begin{Thm}
\label{thm:7}
By P-operations by $S^1$ to the map $f:M \rightarrow {\mathbb{R}}^2$ here, we have a family
 of {\rm (}$m+1${\rm )}-dimensional oriented manifolds $\{M_k\}_{k \in \mathbb{Z}}$ and
 round fold maps $\{f_k:M_k \rightarrow {\mathbb{R}}^2\}_{k \in \mathbb{Z}}$. Furthermore, we have the followings.
\begin{enumerate}
\item By any P-operation by $S^1$ to the map $f$ such that the resulting $S^1$-bundle over $M$ is orientable, the resulting source manifold of the resulting map is diffeomorphic to a manifold belonging to the family $\{M_k\}$.
\item $M_k$ has the structure of an oriented $S^1$-bundle over $M$ such that the 1st homology group of the total space of the bundle
 obtained by the restriction of the bundle $M_k$ to $f^{-1}({\mathbb{R}}^n-{\rm Int} P)$ with coefficient ring $\mathbb{Z}$ is
 isomorphic to $\mathbb{Z} \oplus \mathbb{Z} \oplus (\mathbb{Z}/|k| \mathbb{Z})$ and that the 1st homology
 group $H_1(M_k;\mathbb{Z})$ is isomorphic to $\mathbb{Z} \oplus (\mathbb{Z}/|k| \mathbb{Z})$.
\item The inverse image of an axis of $f_k$ is diffeomorphic to the product of the inverse image of an axis of $f$ and $S^1$. 
\item $f_k$ is $C^{\infty}$ L-trivial.
\item $f_k$ is not $C^{\infty}$ S-trivial if $k$ is not $0$. 
\end{enumerate}
\end{Thm}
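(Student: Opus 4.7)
My plan is to apply the third ($n=2$) case of Theorem~\ref{thm:3} to the map $f:M\to\mathbb{R}^2$ described just before the statement. First I verify its hypotheses. Since $m-n\geq 3$, every normal fiber $F_C$ is either $D^{m-n+1}$ or $S^{m-n+1}$ with the interiors of three discs removed, both simply-connected, so $H_1(F_C;\mathbb{Z})=0$ is torsion-free. The singular value set consists of three concentric circles, and, exactly as in Example~\ref{ex:2}, the singleton $S$ consisting of the vertex corresponding to the connected component of $\mathbb{R}^2-f(S(f))$ containing a proper core is a starting set of every relevant $(H_k;R)$-graph, so the needed start-equivalences hold for free. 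The regular fiber $F_s$ over the proper core is a disjoint union of spheres of dimension $m-n\geq 3$, so the condition on ${C_s}'$ in Theorem~\ref{thm:3} reduces to triviality of the restricted $S^1$-bundle over a circle in the interior of a contractible disc, which is automatic.

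Next I parametrize the family. By Proposition~\ref{prop:2}~(\ref{prop:2.3}) together with the identification $H^2(M;\mathbb{Z})\cong\mathbb{Z}=\langle\nu\rangle$ recorded just before the theorem, oriented $S^1$-bundles over $M$ are classified by the Euler class $k\nu$ for $k\in\mathbb{Z}$; call the total space $M_k$. Applying Theorem~\ref{thm:3} to each yields a round fold map $f_k:M_k\to\mathbb{R}^2$ obtained by a P-operation by $S^1$ to $f$. Conversely, every orientable $S^1$-bundle over $M$ arises as some $M_k$, which proves assertion~(1) of the theorem.

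For assertion~(2), I use the restriction isomorphism $H^2(M;\mathbb{Z})\cong H^2(f^{-1}(\mathbb{R}^2-{\rm Int}\,P);\mathbb{Z})$ noted just before the theorem, so the bundle restricted over $f^{-1}(\mathbb{R}^2-{\rm Int}\,P)$ has Euler class $k\nu'$. Combined with the product structure $f^{-1}(\mathbb{R}^2-{\rm Int}\,P)\cong S^1\times((S^1\times S^{m-n})\setminus{\rm int}\,D^{m-n+1})$, which gives base first homology $\mathbb{Z}\oplus\mathbb{Z}$, and with Proposition~\ref{prop:2}~(\ref{prop:2.4})---whose content is that the fiber class generates a $\mathbb{Z}/|k|\mathbb{Z}$ summand in $H_1$ of the total space---one obtains $H_1\cong\mathbb{Z}\oplus\mathbb{Z}\oplus\mathbb{Z}/|k|\mathbb{Z}$ for the restricted total space. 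A Gysin-sequence computation for the bundle $M_k\to M$, exploiting $H_1(M;\mathbb{Z})\cong\mathbb{Z}$ and the Euler class $k\nu$, then yields $H_1(M_k;\mathbb{Z})\cong\mathbb{Z}\oplus\mathbb{Z}/|k|\mathbb{Z}$.

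Assertions (3)--(5) follow by short arguments. Since an axis $L$ is a ray, $f^{-1}(L)$ has the homotopy type of $S^1\times S^{m-n}$ minus a disc, so for $m-n\geq 3$ its $H^2$ vanishes; by Proposition~\ref{prop:2} the restriction of any oriented $S^1$-bundle to $f^{-1}(L)$ is trivial, giving $f_k^{-1}(L_k)\cong f^{-1}(L)\times S^1$. Item~(4) is built into the construction, as the P-operation delivered by Theorem~\ref{thm:3} outputs a $C^\infty$ L-trivial round fold map. The main obstacle is~(5): if $f_k$ were $C^\infty$ S-trivial for some $k\neq 0$, the surrounding bundle would force $f_k^{-1}(\mathbb{R}^2-{\rm Int}\,P_k)$ to be diffeomorphic to $S^1\times f_k^{-1}(L_k)\cong S^1\times f^{-1}(L)\times S^1$, whose first homology is $\mathbb{Z}^3$ by K\"unneth, contradicting the $\mathbb{Z}\oplus\mathbb{Z}\oplus\mathbb{Z}/|k|\mathbb{Z}$ computed in~(2).
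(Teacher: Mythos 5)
Your proposal is correct and follows essentially the same route as the paper: the paper's argument is precisely the discussion preceding the theorem (the restriction isomorphism on $H^2$, the Euler-class parametrization $k\nu$, the $H_1$ computations for the restricted bundle and for $M_k$, and an appeal to the section~3 theorems), and you have simply made explicit which result applies (the $n=2$ case of Theorem~\ref{thm:3}) and why its graph and $C_s'$ hypotheses are vacuous here because the normal fibers are simply connected. Your homological argument for assertion (5) is also the intended one, matching the Proposition~\ref{prop:2}~(\ref{prop:2.4})-style justification the paper gives for the analogous claim in Theorem~\ref{thm:6}.
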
 

\subsubsection{Case 3}

Let $m,n \in \mathbb{N}$ and $m>n \geq 2$. We also assume that $m \geq 2n$ and $m-n \geq 3$ hold. Let $M_i$ be a closed and connected oriented
 manifold admitting a round fold map $f_i:M_i \rightarrow {\mathbb{R}}^n$ satisfying the conditions similar to those mentioned
 in the beginning of Case 2 ($i=1,2$). We also assume that the fiber of a proper core of $f_1$ is diffeomorphic to the ($m-n$)-dimensional standard sphere
 $S^{m-n}$ and that $f_2$ is obtained by a trivial spinning construction. Let us construct a round fold map from the connected sum $M$ of the two manifolds
 $M_1$ and $M_2$ into ${\mathbb{R}}^n$ by applying a method used in \cite{kitazawa}, \cite{kitazawa 2} and \cite{kitazawa 4}. \\
\ \ \ Let $P_i$ be a proper core of $f_i$ ($i=1,2$) and let $Q$ be a small closed tubular neighborhood of the connected component of $f_2(S(f_2))$ which is the
 boundary of $f_2(M_2)$. It easily follows that ${f_1} {\mid}_{{f_1}^{-1}(P_1)}:{f_1}^{-1}(P_1) \rightarrow P_1$ gives the structure of a trivial $S^{m-n}$-bundle over
 $P_1$ and that ${f_2}^{-1}(Q)$ is a closed tubular neighborhood of ${f_2}^{-1}(C) \subset M_2$. \\
\ \ \ Since the map $f_2$ is obtained by a trivial spinning construction, the inclusion of ${f_2}^{-1}(C)$ into $M_2$ is homotopic
 to a section of the trivial bundle over the proper core $P_2$ of $f_2$ above given by $f_2 {\mid}_{{f_2}^{-1}(\partial P_2)}:{f_2}^{-1}(\partial P_2) \rightarrow \partial P_2$
 and the section is null-homotopic in the manifolds ${f_2}^{-1}(P_2)$ and $M_2$ since the fiber of
 this trivial bundle is a sphere of dimension $m-n>n-1$ and ($n-1$)-connected. From
 the inequality $m \geq 2n= 2(n-1)+2$, the inclusion of $f^{-1}(C)$ into $M_2$ is unknot in the $C^{\infty}$ category and ${f_2}^{-1}(Q)$ has the structure of a trivial $D^{m-n+1}$-bundle over an ($n-1$)-dimensional sphere $\partial Q \bigcap f_2(M)$. More
 precisely, $f_2 {\mid}_{\partial {f_2}^{-1}(Q)}:\partial {f_2}^{-1}(Q) \rightarrow \partial Q \bigcap f_2(M)$ gives
 the structure of a subbundle of the bundle. \\
\ \ \ Let $V_1:={f_1}^{-1}(P_1)$ and $V_2:={f_2}^{-1}(Q)$. From the discussion above, we may regard that the following holds for a diffeomorphism $\Phi:\partial V_2 \rightarrow \partial V_1$ regarded as a bundle
 isomorphism between the two trivial $S^{m-n+1}$-bundles over $S^{n-1}$ inducing a diffeomorphism between the base spaces and an orientation reversing diffeomorphism
 $\Psi:\partial D^m \rightarrow \partial D^m$ extending to a diffeomorphism on $D^m$ or from $M_2-(M_2-D^m)$
 onto $M_1-(M_1-D^m)$, where for two manifolds $X_1$ and $X_2$, $X_1 \cong X_2$ means
 that $X_1$ and $X_2$ are diffeomorphic. 

\begin{eqnarray*}
& & (M_1-{\rm Int} V_1) {\bigcup}_{\Phi} (M_2-{\rm Int} V_2) \\
& \cong & (M_1-{\rm Int} V_1) {\bigcup}_{\Phi} ((D^m-{\rm Int} V_2) \bigcup (M_2-{\rm Int} D^m)) \\
& \cong & (M_1-{\rm Int} V_1) {\bigcup}_{\Phi} ((S^m-({\rm Int} V_2 \sqcup {\rm Int} D^m)) {\bigcup}_{\Psi} (M_2-{\rm Int} D^m)) \\
& \cong & (M_1-{\rm Int} D^m) {\bigcup}_{\Psi} (M_2-{\rm Int} D^m)
\end{eqnarray*}

\ \ \ This means that the resulting manifold $M$, which is the connected sum of $M_1$ and $M_2$, admits a round fold map into ${\mathbb{R}}^n$. More precisely, the
 resulting map
 is obtained by gluing the two maps ${f_1} {\mid}_{M_1-{\rm Int} V_1}$ and ${f_2} {\mid}_{M_2-{\rm Int} V_2}$. The resulting map $f:M \rightarrow {\mathbb{R}}^n$
 is $C^{\infty}$ L-trivial and the inverse image of the axis of $f$ is represented as a manifold diffeomorphic to the connected sum of two
 manifolds homeomorphic to $S^1 \times S^{n-1}$ with the interior of an ($m-n+1$)-dimensional standard closed disc removed. \\
\ \ \ The cohomology group $H^2(M;\mathbb{Z})$ is zero ($n \neq 2$) or $\mathbb{Z} \oplus \mathbb{Z}$ ($n=2$). The manifolds ${f_1}^{-1}({\mathbb{R}}^n-{\rm Int} P_1)$
 and ${f_2}^{-1}({\mathbb{R}}^n-(P_2 \bigcup Q))$ are diffeomorphic to products of $S^{n-1}$ and manifolds homeomorphic to $S^1 \times S^{m-n}$ with the
 interior of an ($m-n+1$)-dimensional standard closed disc removed. For a proper core $P$ of $f$, ${f_1}^{-1}({\mathbb{R}}^n-{\rm Int} P_1) \bigcup {f_2}^{-1}({\mathbb{R}}^n-(P_2 \bigcup Q))$
 is regarded as the inverse image $f^{-1}({\mathbb{R}}^n-{\rm Int} P)$. In the case where $n \geq 3$ is assumed, any orientable $S^1$-bundle over $M$ is
 trivial since the cohomology group $H^2(M;\mathbb{Z})$ is zero and we can construct a round fold map from $M \times S^1$ into ${\mathbb{R}}^n$ by a P-operation by $S^1$.
In the case where $n=2$ is assumed, we
 have $H^2(f^{-1}({\mathbb{R}}^2-{\rm Int} P);\mathbb{Z}) \cong \mathbb{Z} \oplus \mathbb{Z}$ and the homomorphism between the 2nd cohomology
 groups induced by the natural inclusion of $f^{-1}({\mathbb{R}}^2-{\rm Int} P)$ into $M$ is an isomorphism. In
 this case, we can set ${\nu}_1$ as a generator of the group $H^2({f_1}^{-1}({\mathbb{R}}^n-{\rm Int} P_1);\mathbb{Z}) \cong \mathbb{Z}$ and ${\nu}_2$ as a generator of the group $H^2({f_2}^{-1}({\mathbb{R}}^n-(P_2 \bigcup Q));\mathbb{Z}) \cong \mathbb{Z}$
 and by regarding $H^2({f_1}^{-1}({\mathbb{R}}^n-{\rm Int} P_1);\mathbb{Z}) \oplus H^2({f_2}^{-1}({\mathbb{R}}^n-(P_2 \bigcup Q));\mathbb{Z})$ as $H^2(f^{-1}({\mathbb{R}}^2-{\rm Int} P);\mathbb{Z})$ and using the isomorphism between the cohomology groups before, we can define the class ${{\nu}_i}^{\prime} \in H^2(M;\mathbb{Z})$ corresponding to ${\nu}_i$ ($i=1,2$). There exists an
 oriented manifold $M_{(k_1,k_2)}$ having the structure of an oriented $S^1$-bundle over $M$ whose Euler class is $k_1 {{\nu}_1}^{\prime}+k_2 {{\nu}_2}^{\prime}$
 admitting a round fold map into ${\mathbb{R}}^2$ obtained by a P-operation by $S^1$ to $f$. Thus, we
 have the following theorem. 

\begin{Thm}
\label{thm:8}
By P-operations by $S^1$ to the map $f:M \rightarrow {\mathbb{R}}^2$ here, we have a family
 of {\rm (}$m+1${\rm )}-dimensional oriented manifolds $\{M_{(k_1,k_2)}\}_{(k_1,k_2) \in \mathbb{Z} \oplus \mathbb{Z}}$ and
 round fold maps $\{f_{(k_1,k_2)}:M_{(k_1,k_2)} \rightarrow {\mathbb{R}}^2\}_{(k_1,k_2) \in \mathbb{Z} \oplus \mathbb{Z}}$. Furthermore, we have the followings.
\begin{enumerate}
\item By any P-operation by $S^1$ to the map $f$ such that the resulting $S^1$-bundle over $M$ is orientable, the resulting source manifold of the resulting map is diffeomorphic to a manifold belonging to the family $\{M_{(k_1,k_2)}\}$.
\item $M_{(k_1.k_2)}$ has the structure of an oriented $S^1$-bundle over $M$ and the followings hold.
\begin{enumerate}
\item The 1st homology group of the total space of the bundle
 obtained by the restriction of the bundle $M_{(k_1,k_2)}$ to ${f_1}^{-1}({\mathbb{R}}^2-{\rm Int} P_1) \subset M$ with coefficient ring $\mathbb{Z}$ is
 isomorphic to $\mathbb{Z} \oplus \mathbb{Z} \oplus (\mathbb{Z}/|k_1| \mathbb{Z})$.
\item The 1st homology
 group of the total space of the bundle obtained by the restriction of the bundle $M_{(k_1,k_2)}$ to ${f_2}^{-1}({\mathbb{R}}^n-(P_2 \bigcup Q)) \subset M$ is isomorphic to $\mathbb{Z} \oplus \mathbb{Z} \oplus (\mathbb{Z}/|k_2| \mathbb{Z})$.
\item Let $k_1 \neq 0$ and $k_2 \neq 0$ hold and let $k_0$ be the LCM of $|k_1|$ and $|k_2|$. Then the 1st homology group of $M_{(k_1,k_2)}$ with coefficient ring $\mathbb{Z}$ is isomorphic to
 $\mathbb{Z} \oplus \mathbb{Z} \oplus (\mathbb{Z}/k_0 \mathbb{Z})$. 
\item Let $k_1=0$ or $k_2=0$ hold. Then the 1st homology group of $M_{(k_1,k_2)}$ with coefficient ring $\mathbb{Z}$ is isomorphic to
 $\mathbb{Z} \oplus \mathbb{Z}$.
\end{enumerate}
\item The inverse image of an axis of $f_{(k_1,k_2)}$ is diffeomorphic to the product of the inverse image of an axis of $f$ and $S^1$. 
\item $f_{(k_1,k_2)}$ is $C^{\infty}$ L-trivial.
\item If $k_1 \neq 0$ and $k_2 \neq 0$ hold, then $f_{(k_1,k_2)}$ is not $C^{\infty}$ S-trivial.
\item $f_{(k_1,k_2)}$ and $f_{(k_3,k_4)}$ are not $C^{\infty}$ equivalent if $|k_1| \neq |k_3|$ or $|k_2| \neq |k_4|$ holds. 
\end{enumerate}
\end{Thm}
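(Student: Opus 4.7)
The plan is to verify that the construction of Case 3 places us in the hypotheses of Theorem \ref{thm:3}, and then to trace the two independent Euler-class contributions through a Mayer--Vietoris decomposition reflecting the connected-sum structure. For every connected component $C$ of $f(S(f))$, the normal fiber $F_C$ is one of the model pieces listed in Definition \ref{def:2}, so (since $m-n\geq 3$) $H_1(F_C;\mathbb{Z})$ is torsion-free and the relevant $(H_k,R)$-graphs are start-equivalent on the singleton $S$ representing the proper core, exactly as computed in Example \ref{ex:2}. Furthermore the regular fiber at a point of the proper core is a disjoint union of spheres of dimension $m-n\geq 3$, so the Euler-class condition on ${(i_s)}_{\ast}H_2(F_s;\mathbb{Z})$ in Theorem \ref{thm:3} is vacuous. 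Hence Theorem \ref{thm:3} produces a round fold map $f_{(k_1,k_2)}:M_{(k_1,k_2)}\rightarrow{\mathbb{R}}^2$ for every oriented $S^1$-bundle over $M$, and since $H^2(M;\mathbb{Z})\cong \mathbb{Z}{{\nu}_1}^{\prime}\oplus\mathbb{Z}{{\nu}_2}^{\prime}$ as already recorded in Case 3, this furnishes the family $\{M_{(k_1,k_2)}\}$ together with item (1).

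The two local statements in item (2) come from applying Proposition \ref{prop:2} (\ref{prop:2.4}) (after reducing to the $S^1\times S^1$ factor visible in the homotopy type of each summand piece) to the restrictions of $M_{(k_1,k_2)}$ to ${f_1}^{-1}({\mathbb{R}}^n-{\rm Int} P_1)$ and to ${f_2}^{-1}({\mathbb{R}}^n-(P_2 \bigcup Q))$: each such piece is diffeomorphic to the product of $S^{n-1}=S^1$ with a manifold homeomorphic to $S^1\times S^{m-n}$ with the interior of an $(m-n+1)$-disc removed, over which the restricted bundle is oriented with Euler number $k_i$. To compute the global $H_1(M_{(k_1,k_2)};\mathbb{Z})$ I would apply Mayer--Vietoris to the decomposition of $M_{(k_1,k_2)}$ induced by the decomposition of $M$ into its two summand pieces; the key point is that the common $S^1$-fiber class of the bundle $M_{(k_1,k_2)}\rightarrow M$ lifts to a single homology class on the overlap that simultaneously becomes the torsion generator of order $|k_1|$ on one side and the torsion generator of order $|k_2|$ on the other. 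The cokernel of the resulting gluing map $\mathbb{Z}\rightarrow\mathbb{Z}/|k_1|\mathbb{Z}\oplus\mathbb{Z}/|k_2|\mathbb{Z}$ is $\mathbb{Z}/k_0\mathbb{Z}$ with $k_0$ the LCM of $|k_1|$ and $|k_2|$ when both are nonzero, and this torsion collapses when some $k_i=0$, yielding the two subcases of item (2). This identification of the gluing map is where I expect the main obstacle to lie.

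Items (3) and (4) follow directly from the construction in Proposition \ref{prop:1}: the axis preimage of $f_{(k_1,k_2)}$ is the product of the axis preimage of $f$ with $S^1$ because the restriction of the bundle to the axis preimage is trivial by the proof of Theorem \ref{thm:3}, and $L$-triviality is preserved by the P-operation. For item (5), if $f_{(k_1,k_2)}$ were $C^{\infty}$ $S$-trivial then its surrounding bundle would be trivial, so its restriction to each ${f_i}^{-1}({\mathbb{R}}^n-{\rm Int} P_i)$ would be a trivial $S^1$-bundle, forcing $k_1=k_2=0$ by Proposition \ref{prop:2} (\ref{prop:2.4}). Finally, item (6) follows from item (2) combined with the observation that any $C^{\infty}$ equivalence of round fold maps preserves the concentric structure of the singular value set and hence matches innermost components of $f(S(f))$ to one another; this shows that $|k_1|$ and $|k_2|$ are separately $C^{\infty}$ invariants of $f_{(k_1,k_2)}$.
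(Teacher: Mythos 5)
Your overall strategy --- invoke Theorem \ref{thm:3} to produce the family, compute the homology of the two restricted pieces via Proposition \ref{prop:2} (\ref{prop:2.4}), then glue --- is essentially the route the paper takes: the paper's ``proof'' of Theorem \ref{thm:8} is just the Case 3 discussion preceding the statement, which constructs $M_{(k_1,k_2)}$ as the oriented $S^1$-bundle with Euler class $k_1{{\nu}_1}^{\prime}+k_2{{\nu}_2}^{\prime}$ and asserts the homological claims without computation. The difficulty is that the computation you supply at the step you yourself flag as the main obstacle is wrong, and wrong in a way that cannot be repaired. The cokernel of the map $\mathbb{Z}\rightarrow\mathbb{Z}/|k_1|\mathbb{Z}\oplus\mathbb{Z}/|k_2|\mathbb{Z}$ sending the generator to $(1,\pm 1)$ is cyclic of order $\gcd(|k_1|,|k_2|)$, not of order ${\rm lcm}(|k_1|,|k_2|)$: the subgroup generated by $(1,\pm 1)$ has order ${\rm lcm}(|k_1|,|k_2|)$, so the quotient has order $|k_1||k_2|/{\rm lcm}(|k_1|,|k_2|)=\gcd(|k_1|,|k_2|)$. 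The same conclusion is forced for a more elementary reason: the fiber class of the $S^1$-bundle is a single class in $H_1(M_{(k_1,k_2)};\mathbb{Z})$ whose image already has order $|k_1|$ in the first piece and $|k_2|$ in the second, so its global order divides both, hence divides their $\gcd$; it can equal the ${\rm lcm}$ only when $|k_1|=|k_2|$.

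You can check this independently with the Gysin sequence of $p:M_{(k_1,k_2)}\rightarrow M$: exactness of $H_2(M;\mathbb{Z})\rightarrow H_0(M;\mathbb{Z})\rightarrow H_1(M_{(k_1,k_2)};\mathbb{Z})\rightarrow H_1(M;\mathbb{Z})\rightarrow \{0\}$, where the first arrow is evaluation of $k_1{{\nu}_1}^{\prime}+k_2{{\nu}_2}^{\prime}$ on $H_2(M;\mathbb{Z})\cong\mathbb{Z}\oplus\mathbb{Z}$ and so has image $\gcd(k_1,k_2)\mathbb{Z}$, gives $H_1(M_{(k_1,k_2)};\mathbb{Z})\cong\mathbb{Z}\oplus\mathbb{Z}\oplus(\mathbb{Z}/\gcd(|k_1|,|k_2|)\mathbb{Z})$. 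So your Mayer--Vietoris argument does not establish item (2)(c); it in fact exposes that (2)(c) and (2)(d) as printed are inconsistent with the Gysin sequence (for $(k_1,k_2)=(2,3)$ the torsion is trivial rather than $\mathbb{Z}/6\mathbb{Z}$, and for $k_1=0$, $k_2\neq 0$ the torsion is $\mathbb{Z}/|k_2|\mathbb{Z}$ rather than trivial). Separately, in item (5) you should argue through the homology of $f_{(k_1,k_2)}^{-1}({\mathbb{R}}^2-{\rm Int}\ P)$, as the paper does for Theorems \ref{thm:6} and \ref{thm:7}: $C^{\infty}$ S-triviality of the round fold map gives a product structure on that inverse image as an $f_{(k_1,k_2)}^{-1}(L)$-bundle over $\partial P$, which is not the same as triviality of the restricted $S^1$-bundle over each half, so your stated implication needs an intermediate homological step.
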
  



\end{document}